\documentclass[12pt, twoside]{article}
\setlength{\textwidth}{165mm}
\setlength{\textheight}{239mm}
\setlength{\oddsidemargin}{-1mm}
\setlength{\evensidemargin}{-1mm}
\setlength{\topmargin}{-12mm}
\setlength{\skip\footins}{6mm plus 2mm}
\renewcommand{\baselinestretch}{1.1}%
\usepackage{times}
\usepackage{bm}
\usepackage{graphics}
\usepackage{theorem}
\usepackage{graphicx}
\usepackage{amsmath}
\usepackage{latexsym}
\usepackage{amssymb,mathrsfs}
\usepackage[ruled]{algorithm2e}
\usepackage[flushmargin]{footmisc}

\theorembodyfont{\itshape}
\newtheorem{thm}{Theorem}[section]
\newtheorem{lem}{Lemma}[section]
\newtheorem{prop}{Proposition}[section]
\newtheorem{coro}{Corollary}[section]

\theorembodyfont{\rmfamily}
\newtheorem{cond}{Condition}
\newtheorem{defn}{Definition}[section]{\bf}{\rm}
\newtheorem{assumpt}{Assumption}[section]{\bf}{\rm}

\newtheorem{rem}{Remark}[section]{\itshape}{\rmfamily}
\newenvironment{proof}{\noindent{\it Proof.~~}}{\medskip}

%
\makeatletter
\def\eqnarray{\stepcounter{equation}\let\@currentlabel=\theequation
\global\@eqnswtrue
\global\@eqcnt\z@\tabskip\@centering\let\\=\@eqncr
$$\halign to \displaywidth\bgroup\@eqnsel\hskip\@centering
  $\displaystyle\tabskip\z@{##}$&\global\@eqcnt\@ne 
  \hfil$\;{##}\;$\hfil
  &\global\@eqcnt\tw@ $\displaystyle\tabskip\z@{##}$\hfil 
   \tabskip\@centering&\llap{##}\tabskip\z@\cr}
\makeatother
\makeatletter
    \renewcommand{\theequation}{%
    \thesection.\arabic{equation}}
    \@addtoreset{equation}{section}
  \makeatother

\newcommand{\vc}{\bm}

%
\makeatletter
\DeclareRobustCommand\widecheck[1]{{\mathpalette\@widecheck{#1}}}
\def\@widecheck#1#2{%
    \setbox\z@\hbox{\m@th$#1#2$}%
    \setbox\tw@\hbox{\m@th$#1%
       \widehat{%
          \vrule\@width\z@\@height\ht\z@
          \vrule\@height\z@\@width\wd\z@}$}%
    \dp\tw@-\ht\z@
    \@tempdima\ht\z@ \advance\@tempdima2\ht\tw@ \divide\@tempdima\thr@@
    \setbox\tw@\hbox{%
       \raise\@tempdima\hbox{\scalebox{1}[-1]{\lower\@tempdima\box
\tw@}}}%
    {\ooalign{\box\tw@ \cr \box\z@}}}
\makeatother
\def\wc#1{\widecheck{#1}}

\newcommand{\ol}{\overline}

\newcommand{\wt}{\widetilde}
\newcommand{\wh}{\widehat}

\newcommand{\ang}[1]{\hspace{-0.05em}\langle #1 \rangle}
\newcommand{\down}[2]{\smash{\lower#1\hbox{#2}}}
\newcommand{\up}[2]{\smash{\lower-#1\hbox{#2}}}

\newcommand{\dm}{\displaystyle}
\newcommand{\qed}{\hspace*{\fill}$\Box$}


\def\presub#1{\hspace{0.1em}{}_{#1}\hspace{-0.05em}}


\newcommand{\PP}{\mathsf{P}}


\newcommand{\bbA}{\mathbb{A}}
\newcommand{\bbB}{\mathbb{B}}
\newcommand{\bbC}{\mathbb{C}}

\newcommand{\bbL}{\mathbb{L}}

\newcommand{\bbN}{\mathbb{N}}

\newcommand{\bbS}{\mathbb{S}}

\newcommand{\bbZ}{\mathbb{Z}}

\newcommand{\diag}{\mathrm{diag}}
\newcommand{\abs}{\mathrm{abs}}


\newcommand{\rd}{{\rm d}}



\newcommand{\dd}[1]{\if#11 1\!\!1 
\else {\if#1C I\!\!\!C
\else {\if#1G I\!\!\!G 
\else {\if#1J J\!\!\!J 
\else {\if#1S S\!\!\!S
\else {\if#1Z Z\!\!\!Z
\else {\if#1Q O\!\!\!\!Q
\else I\!\!#1
\fi}
\fi}
\fi}
\fi} 
\fi} 
\fi} 
\fi} 

\pagestyle{myheadings} 
\markboth{\small H. Masuyama}
{Limit Formulas for Normalized Fundamental Matrix}

\makeatother


\begin{document}\thispagestyle{empty} 

\hfill

{\Large{\bf
\begin{center}
Limit formulas for the normalized fundamental matrix of the northwest-corner truncation of Markov chains: Matrix-infinite-product-form solutions of block-Hessenberg Markov chains%
\footnote[1]{
This research was supported in part by JSPS KAKENHI Grant Numbers JP15K00034.
}
%
%
\end{center}
}
}

\begin{center}
{
Hiroyuki Masuyama%
\footnote[2]{E-mail: masuyama@sys.i.kyoto-u.ac.jp}
}

\medskip

{\small
Department of Systems
Science, Graduate School of Informatics, Kyoto University\\
Kyoto 606-8501, Japan
}

\bigskip
\medskip

{\small
\textbf{Abstract}

\medskip

\begin{tabular}{p{0.85\textwidth}}
This paper considers the relation between the northwest-corner truncation and the stationary distribution vector of an ergodic (infinitesimal) generator (i.e., the infinitesimal generator of an ergodic continuous-time Markov chain). We first introduce the normalized fundamental matrix of the northwest-corner truncation, which is obtained by normalizing each row of the fundamental matrix. We then present some limit formulas associated with the normalized fundamental matrix. One of the limit formulas shows that the normalized fundamental matrix  converges, as its order (size) goes to infinity, to a stochastic matrix whose rows are all equal to the stationary distribution vector, though some technical conditions are required. This limit formula yields the matrix-infinite-product forms of the stationary distribution vectors of upper and lower block-Hessenberg Markov chains.  
\end{tabular}
}
\end{center}

\begin{center}
\begin{tabular}{p{0.90\textwidth}}
{\small
{\bf Keywords:} %
Fundamental matrix;
Northwest-corner truncation (NW-corner truncation);
Block-Hessenberg Markov chain (BHMC);
Level-dependent M/G/1-type Markov chain;
Level-dependent GI/M/1-type Markov chain;
Level-dependent quasi-birth-and-death process (LD-QBD);
Matrix-infinite-product-form solution (MIP-form solution)
%
%

\medskip

{\bf Mathematics Subject Classification:} %
60J22; 60K25
}
\end{tabular}

\end{center}

\section{Introduction}\label{introduction}

Let $\{Z(t);t \ge 0\}$ denote an ergodic (i.e., irreducible and
positive-recurrent) continuous-time Markov chain with state space
$\bbZ_+:=\{0,1,2,\dots\}$. Let $\vc{Q}:=(q(i,j))_{i,j\in\bbZ_+}$
denote the infinitesimal generator (or $Q$-matrix; see, e.g.,
\cite[Section 2.1]{Ande91}) of the ergodic Markov chain $\{Z(t)\}$,
i.e., for all $i \in \bbZ_+$,
\[
\sum_{j\in\bbZ_+}q(i,j) = 0;\quad
q(i,i) \in (-\infty,0);\quad 
q(i,j) \ge 0,~j \in \bbZ_+ \setminus \{i\}.
\]
We refer to $\vc{Q}$ as the {\it ergodic (infinitesimal) generator}.
We then define $\vc{\pi}:=(\pi(i))_{i\in\bbZ_+}$ as a unique and
positive stationary distribution vector of the ergodic generator
$\vc{Q}$ (see, e.g., \cite[Chapter 5, Theorems 4.4 and 4.5]{Ande91}),
i.e., $\vc{\pi}$ is a positive vector such that $\vc{\pi}\vc{Q} =
\vc{0}$ and $\vc{\pi}\vc{e}=1$, where $\vc{e}$ denotes a column vector
of ones whose order depends on the context.

For $n \in \bbZ_+$, let $\presub{[n]}\vc{Q}:=(\presub{[n]}q(i,j))_{i,j\in
  \{0,1,\dots,n\}}$ denote the $(n+1) \times (n+1)$ northwest-corner
(NW-corner) truncation of the ergodic generator $\vc{Q}$, i.e.,
$\presub{[n]}q(i,j)=q(i,j)$ for all $i,j \in \bbZ_n:=\{0,1,\dots,n\}$. It should be noted that $\presub{[n]}\vc{Q}$ can be considered
the {\it transient} generator of an absorbing Markov chain with
transient states $\bbZ_n$ and absorbing states $\ol{\bbZ}_n :=\bbZ_+ \setminus
\bbZ_n$ (see, e.g., \cite[Chapter 8, Section 6.2]{Brem99}). The
absorbing Markov chain characterized by $\presub{[n]}\vc{Q}$ eventually
reaches the absorbing states from any transient state with probability
one due to the ergodicity of the original Markov chain $\{Z(t);t \ge
0\}$. Therefore, $(-\presub{[n]}\vc{Q})^{-1} \ge \vc{O}, \neq \vc{O}$ (see, e.g., \cite[Theorem 2.4.3]{Lato99}), where $\vc{O}$
denotes the zero matrix, which has an appropriate number of elements (depending on the context). The nonnegative matrix
$(-\presub{[n]}\vc{Q})^{-1}$ is called the {\it fundamental matrix} of
the transient generator $\presub{[n]}\vc{Q}$, which is the continuous-time
counterpart of the fundamental matrix defined for the discrete-time
absorbing Markov chains (see \cite[Chapter 4, Section 6]{Brem99} and
\cite[Chapter 5]{Keme76}).

We now define $\presub{[n]}\vc{F}$, $n\in\bbZ_+$, as
\begin{equation}
\presub{[n]}\vc{F}
= \diag^{-1}\{(- \presub{[n]}\vc{Q})^{-1} \vc{e}\}
(- \presub{[n]}\vc{Q})^{-1},
\label{defn-[n]F}
\end{equation}
where ${\rm diag}\{\,\cdot\,\}$ denotes the diagonal matrix whose
$i$-th diagonal element is equal to the $i$-th element of the vector
in the braces. It follows from (\ref{defn-[n]F}) that
$\presub{[n]}\vc{F} \ge \vc{O}$ and $\presub{[n]}\vc{F}\vc{e}=\vc{e}$,
i.e., $\vc{F}$ is row stochastic (stochastic, for short,
hereafter). We refer to $\presub{[n]}\vc{F}$ as the {\it normalized
  fundamental matrix} of $\presub{[n]}\vc{Q}$.

The main purpose of this paper is twofold. The first purpose is to
present a limit formula for the normalized
fundamental matrix $\presub{[n]}\vc{F}$:
\begin{equation}
\presub{[n]}\vc{F} 
\to \vc{e}\vc{\pi}\quad \mbox{as $n \to \infty$},
\label{limit-formula-02}
\end{equation}
which is derived from the
following formula (see Theorem~\ref{thm-lim-[n]F} below):
\begin{equation}
\presub{[n]}\ol{\vc{\pi}}:=
{ \presub{[n]}\vc{\alpha}(- \presub{[n]}\vc{Q})^{-1} 
\over 
\presub{[n]}\vc{\alpha}(- \presub{[n]}\vc{Q})^{-1} \vc{e}
} \to \vc{\pi}\quad \mbox{as $n \to \infty$},
\label{limit-formula}
\end{equation}
where $\presub{[n]}\vc{\alpha}$ is a $1 \times (n+1)$ probability
vector. The limit formula
(\ref{limit-formula}) requires its convergence conditions related to the
$\vc{f}$-modulated drift condition \cite[Section~14.2.1]{Meyn09} (which is often called the Foster-Lyapunov drift condition; see \cite{Meyn93-III}). Without such convergence conditions, we can also derive a limit formula for the normalized linear combination of the row subvectors of $(-\presub{[n]}\vc{Q})^{-1}$: For any fixed finite $\bbB \subsetneq \bbZ_+$,
\begin{equation}
{ \presub{[n]}\vc{\alpha}(- \presub{[n]}\vc{Q})^{-1}\presub{[n]}\vc{E}_{\bbB} 
\over 
\presub{[n]}\vc{\alpha}(- \presub{[n]}\vc{Q})^{-1}\presub{[n]}\vc{E}_{\bbB} \vc{e}
} \to {\vc{\pi}_{\bbB} \over \vc{\pi}_{\bbB}\vc{e}} \quad \mbox{as $n \to \infty$},
\label{limit-formula-submatrix}
\end{equation}
where $\vc{\pi}_{\bbA} := (\pi(i))_{i\in\bbA}$, $\bbA \in \bbZ_+$, is a subvector of $\vc{\pi}$, and where $\presub{[n]}\vc{E}_{\bbA}$, $\bbA \subseteq \bbZ_n$, is a matrix that can be permuted such that
\[
\presub{[n]}\vc{E}_{\bbA}
= \bordermatrix{
       & 
\bbA 
\cr
\phantom{\bbZ_n \setminus\ }\bbA & 
\vc{I} 
\cr
\bbZ_n \setminus \bbA & 
\vc{O}
}.
\]
Note here $\vc{I}$ denotes the identity matrix of an appropriate order. 

The second purpose is to derive, from the limit formulas
(\ref{limit-formula}) and/or (\ref{limit-formula-submatrix}), the
matrix-infinite-product (MIP) forms of the stationary distribution vectors in upper and lower block-Hessenberg Markov chains (upper and lower BHMCs, for short)\footnote{Upper and lower BHMCs are sometimes called level-dependent M/G/1-type and GI/M/1-type Markov chains, respectively.}. For convenience, we denote, by the MIP-form solution, the MIP form of the stationary distribution vector.

The first limit formula (\ref{limit-formula}) yields MIP-form solutions with {\it single limits} for both upper and lower BHMCs. For convenience, we refer to such MIP-form solutions as {\it single-limit MIP-form solutions}. The single-limit MIP-form solution of the upper BHMC requires some technical conditions (though that of the lower BHMC does not). Moreover, the second limit formula (\ref{limit-formula-submatrix}) yields another MIP-form solution of the upper BHMC, which has a double-limit expression but does not require any technical conditions. Such MIP-form solutions with double limits may be referred to as the {\it double-limit MIP solutions} to distinguish them from the single-limit MIP-form solutions.

It should be noted that he first limit formula (\ref{limit-formula}) is related to the {\it truncation approximation} of Markov chains. According to Gibson and Seneta~\cite{Gibs87-JAP}, the probability
vector $\presub{[n]}\ol{\vc{\pi}}$ in (\ref{limit-formula}) can be considered the {\it linearly augmented
  truncation approximation} to the stationary distribution vector
$\vc{\pi}$ of the original Markov chain $\{Z(t);t \ge 0\}$. Note that if 
\[
\presub{[n]}\vc{\alpha} =
(0,\dots,0,\overset{\mbox{\small $\nu$-th}}{1},0,\dots,0) 
=: \presub{[n]}\vc{e}_{\ang{\nu}}^{\top},
\]
then $\presub{[n]}\ol{\vc{\pi}}$ in (\ref{limit-formula}) is reduced to the {\it $\nu$-th-column-augmented truncation approximation} $\presub{[n]}\vc{\pi}_{\ang{\nu}}$ to $\vc{\pi}$. Especially, $\presub{[n]}\vc{\pi}_{\ang{0}}$ and $\presub{[n]}\vc{\pi}_{\ang{n}}$ are referred to as the first- and last-column-augmented truncation approximations, respectively.

For discrete-time ergodic
Markov chains, Wolf~\cite{Wolf80} discusses the convergence of several
augmented truncation approximations including the linearly augmented
one (see also \cite{Gibs87-JAP}). Wolf~\cite{Wolf80}'s results are
directly applicable to uniformizable continuous-time Markov chains
(see, e.g., \cite[Section~4.5.2]{Tijm03}).  As for the continuous-time
Markov chain, there are some studies on the convergence of augmented
truncation approximations. Hart and Tweedie~\cite{Hart12} prove that
$\lim_{n\to\infty}\presub{[n]}\vc{\pi}_{\ang{0}} = \vc{\pi}$ under the condition
that $\vc{Q}$ is exponentially ergodic. Hart and Tweedie~\cite{Hart12}
also assume that $\vc{Q}$ is (stochastically) monotone, under which
they proved the convergence of any augmented truncation approximation.
Masuyama \cite{Masu17-LAA} presents computable and convergent error
bounds for the last-column-{\it block}-augmented truncation approximation,
under the condition that $\vc{Q}$ is block monotone and exponentially
ergodic. Without block monotonicity, Masuyama~\cite{Masu17-JORSJ}
derives such convergent error bounds under the condition that $\vc{Q}$
satisfies the $\vc{f}$-modulated drift condition and some technical ones.

We now remark that the idea of the normalized fundamental matrix is
inspired by the studies of Shin~\cite{Shin09} and
Takine~\cite{Taki16}. Shin~\cite{Shin09} presents an algorithm for
computing the fundamental matrix of the transient generator of finite
level-dependent quasi-birth-and-death processes (LD-QBDs) with
absorbing states, based on matrix analytic methods
\cite{Gras00-BC,Lato99,Neut89}. The matrix analytic methods are the
foundation of many iterative algorithms
\cite{Baum12-Procedia,Baum12-COR,Brig95,Klim06,Phun10-QTNA,Shin98} for computing
the stationary distribution vectors of upper and lower
BHMCs (including LD-QBDs). These iterative
algorithms require to solve the system of linear equations
for the boundary probabilities. Unlike the existing algorithms, Takine~\cite{Taki16} proposes an algorithm for a special class of upper
BHMCs, which does not require to solve the system of linear equations for the boundary probabilities. The algorithm proposed in \cite{Taki16} computes the conditional stationary distribution vector of the levels below a given one, by using a
limit formula associated with the submatrix of $(- \presub{[n]}\vc{Q})^{-1}$, which is
related to our limit formula (\ref{limit-formula-submatrix}) but different from ours (for details,
see Section~\ref{subsec-upper-case}).

In this paper, we develop algorithm for computing the stationary distribution vectors of upper and lower BHMCs, based on their MIP-form solutions. As mentioned above, there are two types of the MIP-form solutions: the single-limit and double-limit MIP-form solutions. Using the single-limit MIP-form solutions, we establish numerically stable algorithms for upper and lower BHMCs. These algorithms do not require to determine the maximum
number of blocks (or levels) involved in computing. On the other hand, the existing
algorithms in \cite{Baum12-Procedia,Baum12-COR,Brig95,Klim06,Phun10-QTNA,Shin98}, as well as Takine's algorithm~\cite{Taki16}, 
require such input parameters, though it is, in general, difficult to
determine the parameters appropriately. 
This problem does not arise from our algorithms originating from the single-limit MIP-form solutions. 

We note that, as far as upper BHMCs are concerned, our algorithm (mentioned above) does not always stop within a finite number of iterations.
Thus, we develop an alternative algorithm for upper BHMCs by using the double-limit MIP-form solution, instead of the single-limit one. This alternative algorithm always stops after a finite number of iterations, though it requires  the maximum number of blocks involved in computing, as with the existing algorithms.

The rest of this paper is divided into two
sections. Section~\ref{sec-fundamental-matrix} derives the limit
formulas associated with the normalized fundamental matrix.
Section~\ref{sec-Hessenberg} presents the MIP-form solutions of
BHMCs and the algorithms for computing the
solutions.

\section{Normalized fundamental matrix of the NW-corner truncation}\label{sec-fundamental-matrix}

In this section, we first discuss the relation between the normalized fundamental matrix
$\presub{[n]}\vc{F}$ and the linearly augmented
truncation of the ergodic generator $\vc{Q}$. We then present limit formulas associated with $\presub{[n]}\vc{F}$. We also consider the normalized linear combination of the row subvectors of the fundamental matrix $(-\presub{[n]}\vc{Q})^{-1}$, which is related to the linearly augmented
truncation of $\vc{Q}$.

We now describe the notation used hereafter. We denote by $[\,\cdot\,]_{i,j}$ (resp.~$[\,\cdot\,]_i$), the $(i,j)$-th (rep.~$i$-th) element of the matrix (resp.~vector) in the square brackets. We may also extend, depending on
the context, a finite matrix (resp.~vector) to an infinite matrix by
appending an infinite number of zeros to the original matrix (resp.~vector) in such a
way that the existing elements remain in their original positions. For
example, when we write $\presub{[n]}\vc{Q} - \presub{[n+1]}\vc{Q} -
\vc{Q}$, we set $\presub{[n]}q(i,j) = 0$ for $(i,j) \in (\bbZ_+)^2 \setminus (\bbZ_n)^2$
and $\presub{[n+1]}q(i,j) = 0$ for $(i,j) \in (\bbZ_+)^2 \setminus (\bbZ_{n+1})^2$. Finally, we introduce the following notation: If
$\vc{H}_n:=(h_n(i,j))_{i,j \in \bbZ_n}$, $n \in \bbZ_+$, is a matrix
such that $\lim_{n\to\infty}h_n(i,j) = h(i,j)$ for $(i,j) \in
\bbZ_+$, then we represent this situation as $\lim_{n\to\infty}\vc{H}_n =
\vc{H}:=(h(i,j))_{i,j\in\bbZ_+}$. 

\subsection{Relation between the normalized fundamental matrix and linearly augmented truncation approximation}

For any $n \in \bbZ_+$, let
$\presub{[n]}\ol{\vc{Q}}:=(\presub{[n]}\ol{q}(i,j))_{i,j\in\bbZ_n}$ denote
\begin{equation}
\presub{[n]}\ol{\vc{Q}}
= \presub{[n]}\vc{Q} 
- \presub{[n]}\vc{Q}\vc{e} \presub{[n]}\vc{\alpha},
\label{defn-(n)ol{Q}}
\end{equation}
where $\presub{[n]}\vc{\alpha}:=(\presub{[n]}\alpha(i))_{i \in \bbZ_n}$ is
an arbitrary probability vector such that $\sum_{i=0}^n
\presub{[n]}\alpha(i) = 1$. Note here that $\presub{[n]}\vc{Q}\vc{e} \le
\vc{0}$ for all $n \in \bbZ_+$ and
$\lim_{n\to\infty}\presub{[n]}\vc{Q}\vc{e} = \vc{0}$. It thus follows
from (\ref{defn-(n)ol{Q}}) that
\begin{equation}
\lim_{n\to\infty}\presub{[n]}\ol{\vc{Q}} = \vc{Q},
\label{lim-(n)Q=Q}
\end{equation}
and that $\presub{[n]}\ol{\vc{Q}}\vc{e}=\vc{0}$ and $\presub{[n]}\ol{q}(i,j) \ge 0$ for $i \neq j$, which implies
that $\presub{[n]}\ol{\vc{Q}}$ is a {\it conservative} $Q$-matrix (see, e.g.,
\cite[Section 1.2]{Ande91}). Moreover, since $\presub{[n]}\ol{\vc{Q}}$ is finite, it has at least one stationary distribution vector. Indeed,
let $\presub{[n]}\ol{\vc{\pi}}:=(\presub{[n]}\ol{\pi}(i))_{i \in \bbZ_+}$,
denote
\begin{equation}
\presub{[n]}\ol{\vc{\pi}}
= 
{ \presub{[n]}\vc{\alpha}(- \presub{[n]}\vc{Q})^{-1} 
\over 
\presub{[n]}\vc{\alpha}(- \presub{[n]}\vc{Q})^{-1} \vc{e}
}, \qquad n \in \bbZ_+,
\label{eqn-(n)ol{pi}}
\end{equation}
which is a stationary distribution vector of $\presub{[n]}\ol{\vc{Q}}$.

We note that the $Q$-matrix
$\presub{[n]}\ol{\vc{Q}}$ is the continuous-time counterpart of the {\it linear-augmented truncation} of the transition probability matrix (see, e.g., \cite{Gibs87-JAP}). 
Thus, we refer to $\presub{[n]}\ol{\vc{Q}}$ as the {\it linear-augmented truncation} of
$\vc{Q}$. We also refer to $\presub{[n]}\vc{\alpha}$ as the {\it augmentation distribution vector} of
$\presub{[n]}\ol{\vc{Q}}$. Furthermore, we define $\presub{[n]}\vc{Q}_{\ang{\nu}}:=(\presub{[n]}q_{\ang{\nu}}(i,j))_{i,j\in\bbZ_n}$ as a linear-augmented truncation $\presub{[n]}\ol{\vc{Q}}$ with $\presub{[n]}\vc{\alpha} =
\presub{[n]}\vc{e}_{\ang{\nu}}^{\top}$, where $\presub{[n]}\vc{e}_{\ang{\nu}}$, $\nu
\in \bbZ_n$, denotes the $(n+1) \times 1$ unit vector whose $\nu$-th
element is equal to one. By definition,
\begin{equation}
\presub{[n]}\vc{Q}_{\ang{\nu}} 
= \presub{[n]}\vc{Q} - (\presub{[n]}\vc{Q}\vc{e}) \presub{[n]}\vc{e}_{\ang{\nu}}^{\top}.
\label{defn-(n)Q_l}
\end{equation}
We call $\presub{[n]}\vc{Q}_{\ang{\nu}}$ the {\it $\nu$-th-column-augmented
  $(n+1) \times (n+1)$ NW-corner truncation ($\nu$-th-column-augmented
  truncation, for short)} of $\vc{Q}$.

For any $n \in \bbZ_+$ and $\nu \in \bbZ_n$, let
$\presub{[n]}\vc{\pi}_{\ang{\nu}}:=(\presub{[n]}\pi_{\ang{\nu}}(i))_{i\in\bbZ_n}$
denote the stationary distribution vector of the
$\nu$-th-column-augmented truncation $\presub{[n]}\vc{Q}_{\ang{\nu}}$. It then
follows from (\ref{eqn-(n)ol{pi}}) and $\presub{[n]}\vc{\alpha} = \presub{[n]}\vc{e}_{\ang{\nu}}^{\top}$ that
\begin{eqnarray}
\presub{[n]}\vc{\pi}_{\ang{\nu}}
= { \presub{[n]}\vc{e}_{\ang{\nu}}^{\top}(- \presub{[n]}\vc{Q})^{-1} 
\over \presub{[n]}\vc{e}_{\ang{\nu}}^{\top} (- \presub{[n]}\vc{Q})^{-1} \vc{e}
}= \presub{[n]}\vc{e}_{\ang{\nu}}^{\top}\presub{[n]}\vc{F},\qquad  
n \in \bbZ_+,\ \nu \in \bbZ_n,
\label{eqn-(n)pi-02}
\end{eqnarray}
where the second equality holds due to (\ref{defn-[n]F}). Furthermore, (\ref{eqn-(n)pi-02}) leads to
\begin{equation*}
\presub{[n]}\vc{F} 
=
\left(
\begin{array}{c}
\presub{[n]}\vc{\pi}_{\ang{0}}
\\
\presub{[n]}\vc{\pi}_{\ang{1}}
\\
\vdots
\\
\presub{[n]}\vc{\pi}_{\ang{n}}
\end{array}
\right), \qquad  n \in \bbZ_+.
\end{equation*}

\subsection{Limit formulas for the normalized fundamental matrix}

For later use, we introduce the notation. Let $\bbN =
\{1,2,3,\dots\}$. For any row vector $\vc{\alpha}:=(\alpha(j))$ and
matrix $\vc{A}:=(a(i,j))$, let
\[
\|\vc{\alpha}\| = \sum_j | \alpha(j) |,
\qquad 
\|\vc{A}\| = \sup_i \sum_j | a(i,j) |,
\]
respectively. Let $\abs\{\,\cdot\,\}$ denote the
element-wise absolute operator for vectors and matrices.  Thus, $\abs\{\vc{\alpha}\}\vc{e} =
\|\vc{\alpha}\|$. 
For any set $\bbS \subseteq \bbZ_+$, let 
$\vc{1}_{\bbS}:=(1_{\bbS}(i))_{i\in\bbZ_+}$ denote a column
vector whose $i$-th element $1_{\bbS}(i)$ is given by
\[
1_{\bbS}(i)
=\left\{
\begin{array}{ll}
1, & \qquad i \in \bbS,
\\
0, & \qquad i \in \bbZ_+\setminus\bbS.
\end{array}
\right.
\]
Finally, we define the empty sum as zero, e.g.,
$\sum_{m=\ell}^k a_m = 0$ if $\ell > k$, where
$\{a_m;m=0,\pm1,\pm2,\dots\}$ are a sequence of numbers.

We now make two assumptions to show our limit formula for $\presub{[n]}\vc{F}$.
\begin{assumpt}\label{assumpt-1}
There exist some $b \in (0,\infty)$, column vector
$\vc{v}:=(v(i))_{i\in\bbZ_+} \ge \vc{e}$ and finite set $\bbC \subset
\bbZ_+$ such that
\begin{equation}
\vc{Q}\vc{v} 
\le  - \vc{q}^+ - \vc{e} + b \vc{1}_{\bbC},
\label{ineqn-Qv}
\end{equation}
where $\vc{q}^+=(|q(i,i)|)_{i\in\bbZ_+} > \vc{0}$
\end{assumpt}

\begin{rem}\label{assumpt-drift-condition}
Assumption~\ref{assumpt-1} is a special case of the $\vc{f}$-modulated drift condition for continuous-time Markov chains (see Condition~\ref{cond-f-modulated-drift} below), which is the continuous time counterpart of the drift condition described in \cite[Section~14.2.1]{Meyn09}.
Suppose that the generator $\vc{Q}$ is irreducible and regular
(non-explosive). It then follows from \cite[Theorem~1.1]{Kont16} that
Assumption~\ref{assumpt-1} holds if and only if $\vc{Q}$ is
ergodic with a unique stationary distribution vector $\vc{\pi}$ such
that $\vc{\pi}\vc{q}^+ < \infty$.
\end{rem}

\begin{assumpt}\label{assumpt-2}
The following hold:
\begin{align}
\presub{[n]}\ol{\vc{Q}}\vc{v} 
&\le - \vc{q}^+ - \vc{e} + b' \vc{1}_{\bbC},\qquad n \in \bbZ_+,
\label{ineqn-[n]Qv}
\\
\lim_{n\to\infty}\presub{[n]}\ol{\vc{Q}}\vc{v} 
&= \vc{Q}\vc{v}.
\label{potential-02}
\\
\sup_{i \in \bbZ_+ \setminus \bbC}&
\left\{ 
\sum_{j\in \bbZ_+ \setminus \bbC}
{q(i,j) \over |q(i,i)| + 1} (v(j) - v(i))^+
\right\} < \infty,
\label{potential-03}
\end{align}
where $(x)^+ = \max(x,0)$ for $x \in (-\infty,\infty)$.
\end{assumpt}

\begin{rem}
Suppose that $\vc{v}$, appearing in Assumption~\ref{assumpt-1}, satisfies the following:
\begin{eqnarray}
1 \le v(0) \le v(1) \le v(2) &\le& \cdots.
\label{nondecreasing-v}
\end{eqnarray}
From (\ref{defn-(n)ol{Q}}) and (\ref{nondecreasing-v}), we have $\presub{[n]}\ol{\vc{Q}}\vc{v} \le \vc{Q}\vc{v}$ for $n \in \bbZ_+$. Substituting this inequality into (\ref{ineqn-Qv}) yields (\ref{ineqn-[n]Qv}) with $b'=b$. Moreover, using (\ref{lim-(n)Q=Q}) and the dominated convergence theorem, we obtain (\ref{potential-02}). Therefore, Assumption~\ref{assumpt-1} together with (\ref{nondecreasing-v}) is sufficient for the first two conditions (\ref{ineqn-Qv}) and (\ref{potential-02}) of Assumption~\ref{assumpt-2}. Furthermore, we have, from (\ref{ineqn-Qv}),
\begin{eqnarray*}
&&
\sup_{i \in \bbZ_+ \setminus \bbC}
\left\{
\sum_{j\in \bbZ_+ \setminus \bbC}
{q(i,j) \over |q(i,i)| + 1} (v(j) - v(i))^+]
\right\}
\nonumber
\\
&& \quad
\le
\sup_{i \in \bbZ_+}
\left\{
\sum_{j=i+1}^{\infty}
{q(i,j) \over |q(i,i)| + 1}(v(j) - v(i))
\right\},
\end{eqnarray*}
which shows that the last the condition (\ref{potential-03}) holds if
\begin{equation*}
\sup_{i \in \bbZ_+}
\left\{
\sum_{j=i+1}^{\infty}
{q(i,j) \over |q(i,i)| + 1}(v(j) - v(i))
\right\} < \infty.
\end{equation*}
We can find a similar statement in \cite[Theorem~5.2]{Wolf80}. 
\end{rem}

\begin{rem}\label{remark-assumpt-f-drift}
The condition (\ref{potential-03}) implies
(\ref{potential-ol{P}-03}), which guarantees that a superharmonic
vector $\vc{v} \ge \vc{0}$ (see, e.g., \cite[Definition~4.1 and Lemma~4.4]{Wolf80}) of $\vc{P}_{\ol{\bbC}} :=
\diag\{\vc{1}_{\bbZ_+\setminus\bbC}\} \vc{P}
\diag\{\vc{1}_{\bbZ_+\setminus\bbC}\}$ (i.e.,
$\vc{P}_{\ol{\bbC}}\vc{v} \le \vc{v}$) is a potential of
$\vc{P}_{\ol{\bbC}}$, or equivalently,
\[
\vc{v} = \sum_{m=0}^{\infty} (\vc{P}_{\ol{\bbC}})^m 
\left( \vc{v} - \vc{P}_{\ol{\bbC}}\vc{v} \right).
\]
\end{rem}

\medskip

Under Assumptions~\ref{assumpt-1} and \ref{assumpt-2}, we obtain the following result.
\begin{thm}\label{thm-lim-[n]F}
If Assumptions~\ref{assumpt-1} and \ref{assumpt-2} hold, then
\begin{equation}
\lim_{n\to\infty} 
\| \presub{[n]}\ol{\vc{\pi}} - \vc{\pi} \| = 0,
\label{lim-[n]ol{pi}}
\end{equation}
\end{thm}

\begin{proof}
Let $\vc{P} := (p(i,j))_{i,j\in\bbZ_+}$ denote
\begin{equation}
\vc{P}  = \vc{I} + \vc{\Delta}^{-1}\vc{Q}.
\label{defn-wt{P}}
\end{equation}
where
\begin{equation}
\vc{\Delta} = \diag\{\vc{q}^+ + \vc{e}\}.
\label{defn-Delta}
\end{equation}
Since the generator $\vc{Q}$ is ergodic, $\vc{P}$ is an irreducible
stochastic matrix. Let $\vc{\varpi}$ denote
\begin{equation}
\vc{\varpi} 
= {
\vc{\pi}\vc{\Delta} 
\over \vc{\pi}\vc{\Delta}\vc{e}
},
\label{eqn-wt{pi}}
\end{equation}
which is well-defined due to Assumption~\ref{assumpt-1} (see
Remark~\ref{assumpt-drift-condition}). From (\ref{defn-wt{P}}),
(\ref{eqn-wt{pi}}) and $\vc{\pi}\vc{Q}=\vc{0}$, we have
$\vc{\varpi}\vc{P} = \vc{\varpi}$ and thus $\vc{\varpi}$ is a unique
stationary distribution vector of $\vc{P}$. From (\ref{eqn-wt{pi}}),
we also have
\begin{equation}
\vc{\pi} 
= {
\vc{\varpi} \vc{\Delta}^{-1} 
\over \vc{\varpi} \vc{\Delta}^{-1}\vc{e}
}.
\label{eqn-pi-wt{pi}}
\end{equation}

Let $\presub{[n]}\vc{P}$, $n \in \bbZ_+$, denote the $(n+1) \times
(n+1)$ NW-corner truncation of $\vc{P}$, i.e.,
\begin{equation}
\presub{[n]}\vc{P} =
\vc{I} + \presub{[n]}\vc{\Delta}^{-1}\presub{[n]}\vc{Q},
\label{defn-(n)wt{P}}
\end{equation}
where $\presub{[n]}\vc{\Delta}$ denotes the $(n+1) \times (n+1)$
NW-corner truncation of $\vc{\Delta}$. We then define $\presub{[n]}\ol{\vc{P}}$, $n\in\bbZ_+$, as
\begin{equation}
\presub{[n]}\ol{\vc{P}} 
= \presub{[n]}\vc{P} + (\vc{I} - \presub{[n]}\vc{P})\vc{e} \presub{[n]}\vc{\alpha},
\label{defn-[n]ol{P}}
\end{equation}
We also define $\presub{[n]}\ol{\vc{\varpi}}$, $n \in \bbZ_+$, as the stationary distribution vector of
$\presub{[n]}\vc{P}_{\ang{\nu}}$. It is known \cite[Lemma 7.2]{Sene06} that
\begin{eqnarray}
\presub{[n]}\ol{\vc{\varpi}}
&=& { \presub{[n]}\vc{\alpha}(\vc{I} - \presub{[n]}\vc{P})^{-1}
\over \presub{[n]}\vc{\alpha}(\vc{I} - \presub{[n]}\vc{P})^{-1}\vc{e}}.
\label{defn-[n]ol{varpi}}
\end{eqnarray}
By (\ref{defn-(n)wt{P}}), we rewrite (\ref{defn-[n]ol{P}}) and 
(\ref{defn-[n]ol{varpi}}) as
\begin{eqnarray}
\presub{[n]}\ol{\vc{P}} 
&=& \vc{I} + \presub{[n]}\vc{\Delta}^{-1}
(\presub{[n]}\vc{Q} - \presub{[n]}\vc{Q} \vc{e} \presub{[n]}\vc{\alpha})
= \vc{I} + \presub{[n]}\vc{\Delta}^{-1}\presub{[n]}\ol{\vc{Q}},
\label{eqn-[n]ol{P}}
\\
\presub{[n]}\ol{\vc{\varpi}}
&=& 
{ \presub{[n]}\vc{\alpha}(- \presub{[n]}\vc{Q})^{-1}\presub{[n]}\vc{\Delta} 
\over \presub{[n]}\vc{\alpha} ( - \presub{[n]}\vc{Q})^{-1} \presub{[n]}\vc{\Delta}\vc{e}
}.
\label{eqn-(n)ol{varpi}}
\end{eqnarray}
From (\ref{eqn-(n)ol{pi}}) and (\ref{eqn-(n)ol{varpi}}), we also have
\begin{equation}
\presub{[n]}\ol{\vc{\pi}}
= {
\presub{[n]}\ol{\vc{\varpi}} \vc{\Delta} 
\over \presub{[n]}\ol{\vc{\varpi}} \vc{\Delta} \vc{e}
}.
\label{eqn-(n)ol{pi}-02}
\end{equation}
Furthermore, by (\ref{defn-wt{P}}) and (\ref{eqn-[n]ol{P}}), we rewrite the conditions (\ref{ineqn-Qv})--(\ref{potential-03}) as
\begin{eqnarray*}
\vc{P}\vc{v} 
&\le& \vc{v} - \vc{e} + b  \vc{1}_{\bbC},
\label{potential-ol{P}-00}
\\
\presub{[n]}\ol{\vc{P}}\vc{v} 
&\le&  \vc{v}- \vc{e} + b'  \vc{1}_{\bbC},\qquad n \in \bbZ_+,
\label{potential-ol{P}-01}
\\
\lim_{n\to\infty}\presub{[n]}\ol{\vc{P}}\vc{v} 
&=& \vc{P}\vc{v},
\label{potential-ol{P}-02}
\end{eqnarray*}
and 
\begin{eqnarray}
\sup_{i \in \bbZ_+ \setminus \bbC}
\left\{ 
\sum_{j\in \bbZ_+ \setminus \bbC}
p(i,j) (v(j) - v(i))^+
\right\} &<& \infty.
\label{potential-ol{P}-03}
\end{eqnarray}
Therefore, Corollary~4.5 of \cite{Wolf80} implies that
\begin{equation}
\lim_{n\to\infty}\|\presub{[n]}\ol{\vc{\varpi}} - \vc{\varpi}\| 
= 0.
\label{eqn-lim-[n]ol{pi}}
\end{equation}

We are now ready to prove (\ref{lim-[n]ol{pi}}). Let $\vc{d} = \vc{\Delta}^{-1}\vc{e}$. It then follows from
(\ref{eqn-(n)pi-02}), (\ref{eqn-pi-wt{pi}}) and (\ref{eqn-(n)ol{pi}-02})  that
\begin{eqnarray}
\presub{[n]}\ol{\vc{\pi}} - \vc{\pi}
&=& {
\presub{[n]}\ol{\vc{\varpi}} \vc{\Delta}^{-1} 
\over 
\presub{[n]}\ol{\vc{\varpi}} \vc{d} 
}
-
{
\vc{\varpi} \vc{\Delta}^{-1} 
\over 
\vc{\varpi} \vc{d} 
}
\nonumber
\\
&=& \left[
{1 \over \presub{[n]}\ol{\vc{\varpi}} \vc{d} } 
(\presub{[n]}\ol{\vc{\varpi}} - \vc{\varpi} )
+ 
\left( 
{1 \over \presub{[n]}\ol{\vc{\varpi}} \vc{d} }
-
{1 \over \vc{\varpi} \vc{d} }
\right)
\vc{\varpi} 
\right]
 \vc{\Delta}^{-1}
\nonumber
\\
&=&
{1 \over \presub{[n]}\ol{\vc{\varpi}} \vc{d} }
\left[
( \presub{[n]}\ol{\vc{\varpi}} - \vc{\varpi} )
+
( \vc{\varpi} - \presub{[n]}\ol{\vc{\varpi}} )
\vc{d}
{ 
\vc{\varpi}
\over 
\vc{\varpi} \vc{d}
}
\right] \vc{\Delta}^{-1},
\label{eqn-170312-01}
\end{eqnarray}
where, as mentioned in the beginning of this section, the finite
probability vectors $\presub{[n]}\vc{\pi}_{\ang{\nu}}$ and
$\presub{[n]}\vc{\varpi}_{\ang{\nu}}$ are extended to the infinite ones by
appending zeros to these vectors.  From (\ref{eqn-170312-01}), we have
\begin{eqnarray}
\| \presub{[n]}\ol{\vc{\pi}} - \vc{\pi} \|
&=&
\abs\{ \presub{[n]}\ol{\vc{\pi}} - \vc{\pi} \} \vc{e}
\nonumber
\\
&\le&
{1 \over \presub{[n]}\ol{\vc{\varpi}} \vc{d} }
\left[
\abs\{ \presub{[n]}\ol{\vc{\varpi}} - \vc{\varpi} \}
+
\abs\{ \vc{\varpi} - \presub{[n]}\ol{\vc{\varpi}} \}
\vc{d}
{ 
\vc{\varpi}
\over 
\vc{\varpi} \vc{d}
}
\right] \vc{d}
\nonumber
\\
&=&
{2 \cdot \abs\{ \presub{[n]}\ol{\vc{\varpi}} - \vc{\varpi} \}
\vc{d}
\over \presub{[n]}\ol{\vc{\varpi}} \vc{d} }.
\label{eqn-pi-hat{pi}-02}
\end{eqnarray}

We discuss the convergence of the right-hand side of
(\ref{eqn-pi-hat{pi}-02}).  It follows from (\ref{defn-Delta}) that
\begin{equation}
\vc{d} = \vc{\Delta}^{-1}\vc{e} \le \vc{e}.
\label{ineqn-q^*}
\end{equation}
It also follows from
(\ref{ineqn-q^*}) that $\presub{[n]}\ol{\vc{\varpi}} \vc{d} \le
\presub{[n]}\ol{\vc{\varpi}}\vc{e} = 1$ for $n \in \ol{\bbZ}_{\nu-1}$. Therefore, using (\ref{eqn-lim-[n]ol{pi}}) and the
dominated convergence theorem, we obtain
\begin{equation}
\lim_{n\to\infty} \presub{[n]}\ol{\vc{\varpi}} \vc{d} 
=
\vc{\varpi}\vc{d} \in (0,\infty)\quad \mbox{for any fixed $\nu\in\bbZ_+$}.
\label{limit-(n)wt{pi}_{nu}-01}
\end{equation}
In addition, using (\ref{eqn-lim-[n]ol{pi}}) and (\ref{ineqn-q^*}),
we have
\begin{eqnarray}
\abs\{ \presub{[n]}\ol{\vc{\varpi}} - \vc{\varpi} \}
\vc{d}
&\le& \abs\{ \presub{[n]}\ol{\vc{\varpi}} - \vc{\varpi} \} \vc{e}
\nonumber
\\
&=& \| \presub{[n]}\ol{\vc{\varpi}} - \vc{\varpi}  \| \to 0
\quad
\mbox{as $n \to \infty$},
\label{limit-(n)wt{pi}_{nu}-02}
\end{eqnarray}
where the limit holds for any fixed $\nu\in\bbZ_+$. 
Applying (\ref{limit-(n)wt{pi}_{nu}-01}) and
(\ref{limit-(n)wt{pi}_{nu}-02}) to (\ref{eqn-pi-hat{pi}-02}) yields (\ref{lim-[n]ol{pi}}).\qed
\end{proof}

\begin{coro}\label{coro-limit-[n]F}
Suppose that Assumption~\ref{assumpt-1} and (\ref{potential-03}) are satisfied. Furthermore, suppose that for each $n \in \bbN$ and $\nu \in \bbZ_n$ there exists some $b_{n,\nu} \in (0,\infty)$ such that
\begin{equation}
\presub{[n]}\vc{Q}_{\nu} \vc{v} 
\le - \vc{q}^+ - \vc{e} + b_{n,\nu} \vc{1}_{\bbC}.
\label{ineqn-[n]Q_{nu}v}
\end{equation}
We then have
\begin{equation}
\lim_{n\to\infty} 
\presub{[n]}\vc{F}
= \lim_{n\to\infty} \left(
\begin{array}{c}
\presub{[n]}\vc{\pi}_{\ang{0}}
\\
\presub{[n]}\vc{\pi}_{\ang{1}}
\\
\vdots
\\
\presub{[n]}\vc{\pi}_{\ang{n}}
\end{array}
\right)
= \vc{e}\vc{\pi}.
\label{limit-formula-(n)F}
\end{equation}
\end{coro}

\begin{proof}
We can see that, for all $n \in \bbN$ and $\nu \in \bbZ_n$,
\[
\lim_{n\to\infty}\presub{[n]}\vc{Q}_{\nu}\vc{v} 
=\vc{Q}\vc{v}.
\]
Thus, Assumption~\ref{assumpt-2} holds with $\presub{[n]}\ol{\vc{Q}}=\presub{[n]}\vc{Q}_{\nu}$ for all $n \in \bbN$ and $\nu \in \bbZ_n$. Consequently, Theorem~\ref{thm-lim-[n]F} implies this corollary.
\end{proof}

\medskip

Corollary~\ref{coro-limit-[n]F} presents the limit formula for the {\it whole} $\presub{[n]}\vc{F}$ and thus requires the technical conditions originated from Assumption~\ref{assumpt-2}. However, without these technical condition, we obtain a limit formula for a {\it single and fixed} row of $\presub{[n]}\vc{F}$.
\begin{thm}\label{thm-limit-row-[n]F}
If Assumption~\ref{assumpt-1} holds, then
\begin{equation}
\lim_{n\to\infty} 
\| \presub{[n]}\vc{e}_{\ang{\nu}}^{\top}\presub{[n]}\vc{F} - \vc{\pi} \| = 0
\quad\mbox{for any fixed $\nu \in \bbZ_+$}.
\label{lim-(n)e*(n)F}
\end{equation}
\end{thm}

\begin{proof}
For $n
\in \bbZ_+$, let $\presub{[n]}\ol{\vc{\varpi}}_{\ang{\nu}}$ and
$\presub{[n]}\ol{\vc{P}}_{\ang{\nu}}$, $n \in \ol{\bbZ}_{\nu-1}$, denote
\begin{eqnarray}
\presub{[n]}\ol{\vc{\varpi}}_{\ang{\nu}}
&=& { 
\presub{[n]}\vc{e}_{\ang{\nu}}^{\top} (\vc{I} - \presub{[n]}\vc{P})^{-1}
\over 
\presub{[n]}\vc{e}_{\ang{\nu}}^{\top} (\vc{I} - \presub{[n]}\vc{P})^{-1}\vc{e}},\label{defn-[n]ol{varpi}_{nu}}
\\
\presub{[n]}\ol{\vc{P}}_{\ang{\nu}} 
&=& \presub{[n]}\vc{P} + (\vc{I} - \presub{[n]}\vc{P})\vc{e} 
\presub{[n]}\vc{e}_{\ang{\nu}}^{\top},
\label{defn-[n]ol{P}_{nu}}
\end{eqnarray}
where $\presub{[n]}\vc{P}$ is given in (\ref{defn-(n)wt{P}}), i.e.,
$\presub{[n]}\vc{P}$ is the $(n+1) \times (n+1)$ NW-corner of $\vc{P}$
in (\ref{defn-wt{P}}).  It then follows that
$\presub{[n]}\ol{\vc{\varpi}}$ is a stationary distribution vector of
$\presub{[n]}\ol{\vc{P}}$. Substituting (\ref{defn-(n)wt{P}}) into
(\ref{defn-[n]ol{varpi}_{nu}}) yields
\begin{equation*}
\presub{[n]}\ol{\vc{\varpi}}_{\ang{\nu}}
= 
{ \presub{[n]}\vc{e}_{\ang{\nu}}^{\top}
(- \presub{[n]}\vc{Q})^{-1}\presub{[n]}\vc{\Delta} 
\over 
\presub{[n]}\vc{e}_{\ang{\nu}}^{\top} 
( - \presub{[n]}\vc{Q})^{-1} \presub{[n]}\vc{\Delta}\vc{e}
}.
\end{equation*}
From this and (\ref{eqn-(n)ol{pi}}), we have
\begin{equation*}
\presub{[n]}\ol{\vc{\pi}}
= {
\presub{[n]}\ol{\vc{\varpi}} \,\presub{[n]}\vc{\Delta}^{-1}
\over 
\presub{[n]}\ol{\vc{\varpi}} \,\presub{[n]}\vc{\Delta}^{-1}\vc{e}
},\qquad n \in \bbZ_+.
\end{equation*}
In addition, by (\ref{defn-(n)ol{Q}}) and (\ref{defn-(n)wt{P}}), we rewrite
(\ref{defn-[n]ol{P}_{nu}}) as
\begin{eqnarray}
\presub{[n]}\ol{\vc{P}}_{\ang{\nu}}
&=& \vc{I} + \presub{[n]}\vc{\Delta}^{-1}
(\presub{[n]}\vc{Q} - \presub{[n]}\vc{Q} \vc{e} \presub{[n]}\vc{\alpha})
\nonumber
\\
&=&  \vc{I} + \presub{[n]}\vc{\Delta}^{-1}\presub{[n]}\ol{\vc{Q}}.
\label{eqn-(n)ol{P}}
\end{eqnarray}

In fact, it is known \cite[Theorem 5.1]{Wolf80} (see also \cite[Theorem
  3.1]{Gibs87-JAP}) that
$\lim_{n\to\infty}\|\presub{[n]}\ol{\vc{\varpi}}_{\ang{\nu}} - \vc{\varpi}\| =
0$. Therefore, using the above results, and following the proof of
Theorem~\ref{thm-lim-[n]F}, we can readily prove that
$\lim_{n\to\infty}\|\presub{[n]}\ol{\vc{\pi}} - \vc{\pi}\| = 0$. \qed
\end{proof}

\begin{coro}\label{coro-limit-[n]ol{pi}}
Suppose that, for all $n \in \bbZ_+$, the augmentation distribution vector $\presub{[n]}\vc{\alpha}$ satisfies 
\[
\presub{[n]}\vc{\alpha} = (\alpha_0,\alpha_1,\dots\alpha_m,0,0,\dots,0),
\]
where $m \in \bbZ_n$ is fixed arbitrarily and independently of $n$.  We then have
\[
\lim_{n\to\infty} 
\| \presub{[n]}\ol{\vc{\pi}} - \vc{\pi} \| = 0.
\]
\end{coro}

\begin{proof}
In the present setting, (\ref{eqn-(n)ol{pi}}) and (\ref{eqn-(n)pi-02}) yield
\begin{eqnarray}
\presub{[n]}\ol{\vc{\pi}} 
&=& { 
\sum_{\nu=1}^m \alpha_{\nu} \presub{[n]}\vc{e}_{\ang{\nu}}^{\top}
(-\presub{[n]}\vc{Q})^{-1}
\over  
\sum_{j=1}^m \alpha_{j} \presub{[n]}\vc{e}_{\ang{j}}^{\top}
(-\presub{[n]}\vc{Q})^{-1}\vc{e}
}
\nonumber
\\
&=& \sum_{\nu=1}^m \presub{[n]}\vc{\pi}_{\ang{\nu}}
{ 
\alpha_{\nu}\presub{[n]}\vc{e}_{\ang{\nu}}^{\top}
(-\presub{[n]}\vc{Q})^{-1}\vc{e}
\over  
\sum_{j=1}^m \alpha_{j} \presub{[n]}\vc{e}_{\ang{j}}^{\top}
(-\presub{[n]}\vc{Q})^{-1}\vc{e}
}
= \sum_{\nu=1}^m \gamma_{\nu} \presub{[n]}\vc{\pi}_{\ang{\nu}},
\label{eqn-170709-01}
\end{eqnarray}
where
\begin{equation*}
\gamma_{\nu}
= { 
\alpha_{\nu}\presub{[n]}\vc{e}_{\ang{\nu}}^{\top}
(-\presub{[n]}\vc{Q})^{-1}\vc{e}
\over  
\sum_{j=1}^m \alpha_{j} \presub{[n]}\vc{e}_{\ang{j}}^{\top}
(-\presub{[n]}\vc{Q})^{-1}\vc{e}
},\qquad \nu \in \bbZ_m.
\end{equation*}
Note here that $\sum_{\nu=1}^m\gamma_{\nu}=1$. Thus, applying Theorem~\ref{thm-limit-row-[n]F} to (\ref{eqn-170709-01}), we have
\begin{eqnarray*}
\lim_{n\to\infty}
\| \presub{[n]}\ol{\vc{\pi}} - \vc{\pi} \|
&\le& \sum_{\nu=1}^m\gamma_{\nu}
\lim_{n\to\infty} 
\| \presub{[n]}\vc{\pi}_{\ang{\nu}} - \vc{\pi} \| = 0.
\end{eqnarray*}
The proof is completed \qed.
\end{proof}

\begin{rem}
Theorem~\ref{thm-limit-row-[n]F} does not necessarily implies that if
Assumption~\ref{assumpt-1} holds then, for any fixed $m \in
\bbZ_+$,
\begin{equation*}
\lim_{n\to\infty} 
\| \presub{[n+m]}\vc{e}_{\ang{n}}^{\top} \, \presub{[n+m]}\vc{F} - \vc{\pi} \| = 0,
\label{lim-(n+m)e*(n+m)F}
\end{equation*}
or equivalently, $\lim_{n\to\infty} \| \presub{[n+m]}\vc{\pi}_{\ang{n}} -
\vc{\pi} \| = 0$.  In fact, although this is the discrete-time case,
Gibson and Seneta~\cite{Gibs87-JAP} provide an example such that the
last-column-augmented truncation approximation $\presub{[n]}\vc{\pi}_{\{n\}}$
does not converge to $\vc{\pi}$ as $n \to \infty$ (see also
\cite{Wolf80}). 
\end{rem}

\subsection{Limit formula for the normalized linear combination of the row-subvectors of the fundamental matrix}

In this subsection, we consider the normalized linear combination of the row-subvectors of the fundamental matrix $(-\presub{[n]}\vc{Q})^{-1}$. To this end,  fix a finite $\bbB \subset \bbZ_+$ arbitrarily, and let $B^* = \max\{i\in\bbB\}$. For $n \ge B^*$, let $\bbB_n^+$ denote the index set of the nonzero rows (if any) of $(-\presub{[n]}\vc{Q})^{-1}\vc{E}_{\bbB}$, i.e.,
\begin{eqnarray}
\bbB_n^+ 
&=& \{\nu\in\bbZ_n; \presub{[n]}\vc{e}_{\nu}^{\top} (-\presub{[n]}\vc{Q})^{-1}\presub{[n]}\vc{E}_{\bbB}\vc{e} > 0\}
\nonumber
\\
&=& \left\{
\nu\in\bbZ_n; 
\sum_{j\in\bbB}\left[ (-\presub{[n]}\vc{Q})^{-1} \right]_{\nu,j}  > 0 \right\}.
\label{defn-B_n^+-02}
\end{eqnarray}
For convenience, let $\bbB_n^+  = \varnothing$ for $n < B^*$.

\begin{lem}\label{lem-B_n^+}
The following hold:
\begin{eqnarray}
\bbB_n^+ &\subseteq& \bbB_m^+, \qquad 
\mbox{if~~$B^* \le n \le m$},
\label{eqn-B_n^+-B_m^+}
\\
\bigcup_{n=0}^{\infty} \bbB_n^+ &\supseteq& \bbB.
\label{eqn-cup-B_n^+}
\end{eqnarray} 
\end{lem}

\begin{proof}
Let $\vc{P}^{(t)}:=(p^{(t)}(i,j))_{i,j\in\bbZ_+}$ denote the
transition matrix function of the Markov chain $\{Z(t)\}$ with ergodic
generator $\vc{Q}$, i.e., $\PP(Z(t) = j \mid Z(0) = i)$ for all
$i,j\in\bbZ_+$. It then follows from \cite[Chapter 2, Proposition
  2.14]{Ande91} that, for all $i,j \in \bbZ_+$ and $t \ge 0$,
\[
 \left[ \exp\{\presub{[n]}\vc{Q} t\}  \right]_{i,j}
\nearrow \left[ \vc{P}^{(t)}  \right]_{i,j} > 0
\quad \mbox{as $n \to \infty$}.
\]
Thus, using the monotone convergence theorem, we have, for all $i,j \in \bbZ_+$,
\begin{eqnarray}
\lefteqn{
\left[ (-\presub{[n]}\vc{Q})^{-1} \right]_{i,j}
}
\quad &&
\nonumber
\\
&=& \left[ \int_0^{\infty} \exp\{\presub{[n]}\vc{Q} t\} \rd t  \right]_{i,j}
\nearrow \left[ \int_0^{\infty} \vc{P}^{(t)} \rd t  \right]_{i,j}=\infty
\quad \mbox{as $n \to \infty$}. \qquad
\label{lim-(-(n)Q)^{-1}]}
\end{eqnarray}
From (\ref{defn-B_n^+-02}) and (\ref{lim-(-(n)Q)^{-1}]}), we have (\ref{eqn-B_n^+-B_m^+}). Recall that the finite set $\bbB$ is fixed independently of $n$. It thus follows from (\ref{lim-(-(n)Q)^{-1}]}) that, for all sufficiently large $n \ge B^*$, 
\[
\left[ (-\presub{[n]}\vc{Q})^{-1} \right]_{i,j} > 0\quad \mbox{for all $i,j \in \bbB$},
\]
which implies that (\ref{eqn-cup-B_n^+}) holds.
\qed
\end{proof}

We now define the normalized linear combination of the row-subvectors of $(-\presub{[n]}\vc{Q})^{-1}$.
Let $\presub{[n]}\ol{\vc{\pi}}_{\bbB}^{\ast}$, $n \ge B^*$, denote 
\begin{equation}
\presub{[n]}\ol{\vc{\pi}}_{\bbB}^{\ast}
= {
\presub{[n]}\vc{\alpha}_{\bbB}
(-\presub{[n]}\vc{Q})^{-1} \presub{[n]}\vc{E}_{\bbB}
\over
\presub{[n]}\vc{\alpha}_{\bbB}
(-\presub{[n]}\vc{Q})^{-1} \presub{[n]} \vc{E}_{\bbB}\vc{e}
},
\label{defn-[n]ol{pi}_B^*}
\end{equation}
where $\presub{[n]}\vc{\alpha}_{\bbB}:=(\presub{[n]}\alpha_{\bbB}(i))_{i \in \bbZ_n}$ is a probability vector such that
\begin{equation}
\sum_{i\in\bbB_n^+}\presub{[n]}\alpha_{\bbB}(i) > 0.
\label{defn-[n]alpha_B}
\end{equation}
Note here that $\presub{[n]}\ol{\vc{\pi}}_{\bbB}^{\ast}$ in (\ref{defn-[n]ol{pi}_B^*}) is well-defined due to (\ref{eqn-cup-B_n^+}) of Lemma~\ref{lem-B_n^+}.
It follows from (\ref{defn-[n]ol{pi}_B^*}) and (\ref{eqn-(n)ol{pi}}) that, for $n \ge B^*$,
\begin{equation}
\presub{[n]}\ol{\vc{\pi}}_{\bbB}^{\ast}
= {\presub{[n]}\ol{\vc{\pi}} \presub{[n]}\vc{E}_{\bbB}
\over 
\presub{[n]}\ol{\vc{\pi}} \presub{[n]}\vc{E}_{\bbB} \vc{e}
}
= {\presub{[n]}\ol{\vc{\pi}}_{\bbB} \over \presub{[n]}\ol{\vc{\pi}}_{\bbB}\vc{e}},
\label{eqn-[n]ol{pi}_B^*}
\end{equation}
where $\presub{[n]}\ol{\vc{\pi}}_{\bbB}=\presub{[n]}\ol{\vc{\pi}} \presub{[n]}\vc{E}_{\bbB}$. A limit formula for $\presub{[n]}\ol{\vc{\pi}}_{\bbB}^{\ast}$ is presented in the following theorem, which does not require Assumption~\ref{assumpt-1} or \ref{assumpt-2}. 
\begin{thm}\label{thm-submatrix-(n)F}
Let $\vc{\pi}_{\bbB}^{\ast} = \vc{\pi}_{\bbB}/(\vc{\pi}_{\bbB}\vc{e})$. We then have
\begin{eqnarray}
\lim_{n\to\infty} 
\presub{[n]}\ol{\vc{\pi}}_{\bbB}^{\ast}
&=& \lim_{n\to\infty} {
\presub{[n]}\vc{\alpha}_{\bbB}
(-\presub{[n]}\vc{Q})^{-1} \presub{[n]}\vc{E}_{\bbB}
\over
\presub{[n]}\vc{\alpha}_{\bbB}
(-\presub{[n]}\vc{Q})^{-1} \presub{[n]}\vc{E}_{\bbB}\vc{e}
}
= \vc{\pi}_{\bbB}^{\ast},
\label{lim-conditional}
\end{eqnarray}
where the first equality is due to (\ref{defn-[n]ol{pi}_B^*}).
\end{thm}

\begin{proof}
It is not assumed that the ergodic generator $\vc{Q}$ has a special structure. Thus, without loss of generality, it suffices to prove that, for an arbitrary $m \in \bbZ_+$,
\begin{equation}
\lim_{n\to\infty}
\presub{[n]}\ol{\vc{\pi}}_{\bbZ_m}^{\ast}
= \vc{\pi}_{\bbZ_m}^{\ast}.
\label{lim-conditional-02}
\end{equation}

We partition $\ol{\vc{Q}}$ and $\presub{[n]}\ol{\vc{Q}}$, $n \ge m$, as 
\begin{eqnarray}
\vc{Q}
&=& \bordermatrix{
       & 
\bbZ_m & 
\ol{\bbZ}_m
\cr
\bbZ_m & 
\vc{Q}_{\bbZ_m} &
\vc{Q}_{\bbZ_m,\ol{\bbZ}_m}
\cr
\ol{\bbZ}_m & 
\vc{Q}_{\ol{\bbZ}_m,\bbZ_m} &
\vc{Q}_{\ol{\bbZ}_m}
},
\label{partition-Q}
\\
\presub{[n]}\ol{\vc{Q}}
&=& \bordermatrix{
       & 
\bbZ_m & 
\bbZ_n \setminus \bbZ_m
\cr
\bbZ_m & 
\presub{[n]}\ol{\vc{Q}}_{\bbZ_m} &
\presub{[n]}\ol{\vc{Q}}_{\bbZ_m,\ol{\bbZ}_m}
\cr
\bbZ_n \setminus \bbZ_m & 
\presub{[n]}\ol{\vc{Q}}_{\ol{\bbZ}_m,\bbZ_m} &
\presub{[n]}\ol{\vc{Q}}_{\ol{\bbZ}_m}
},
\label{partition-(n)ol{Q}}
\end{eqnarray}
respectively.
We then define $\vc{Q}_{\bbZ_m}^{\ast}$ and $\presub{[n]}\ol{\vc{Q}}_{\bbZ_m}^{\ast}$ as
\begin{eqnarray}
\vc{Q}_{\bbZ_m}^{\ast}
&=& \vc{Q}_{\bbZ_m} 
+  \vc{Q}_{\bbZ_m,\ol{\bbZ}_m} (-\vc{Q}_{\ol{\bbZ}_m})^{-1}\vc{Q}_{\ol{\bbZ}_m,\bbZ_m},
\label{defn-Q_{<=m}}
\\
\presub{[n]}\ol{\vc{Q}}_{\bbZ_m}^{\ast}
&=& \presub{[n]}\ol{\vc{Q}}_{\bbZ_m} 
+  \presub{[n]}\ol{\vc{Q}}_{\bbZ_m,\ol{\bbZ}_m} (-\presub{[n]}\ol{\vc{Q}}_{\ol{\bbZ}_m})^{-1}\presub{[n]}\ol{\vc{Q}}_{\ol{\bbZ}_m,\bbZ_m},
\label{defn-(n)ol{Q}_{<=m}}
\end{eqnarray}
respectively. The $Q$-matrix $\vc{Q}{}_{\bbZ_m}^{\ast}$
(resp.\ $\presub{[n]}\ol{\vc{Q}}_{\bbZ_m}^{\ast}$) is the generator of a
censored Markov chain with state space $\bbZ_m$, which is obtained by observing  the Markov chain with generator $\vc{Q}$ (resp.\ $\presub{[n]}\ol{\vc{Q}}$) only when it is running in $\bbZ_m$. Since $\vc{Q}$ is ergodic,  
$\vc{Q}_{\bbZ_m}^{\ast}$ is also ergodic and thus has a unique stationary distribution vector $\vc{\pi}_{\bbZ_m}^{\ast}=\vc{\pi}_{\bbZ_m}/(\vc{\pi}_{\bbZ_m}\vc{e})$. On the other hand, $\presub{[n]}\ol{\vc{Q}}$ is not necessarily ergodic, but it has a stationary
distribution vector $\presub{[n]}\ol{\vc{\pi}}_{\bbZ_m}^{\ast}={\presub{[n]}\ol{\vc{\pi}}_{\bbZ_m} / (\presub{[n]}\ol{\vc{\pi}}_{\bbZ_m}\vc{e}})$.

We note that the following holds (see Appendix~\ref{proof-lim-(n)Q_{<=m}^*}):
\begin{equation}
\lim_{n\to\infty}\presub{[n]}\ol{\vc{Q}}_{\bbZ_m}^{\ast} = \vc{Q}_{\bbZ_m}^{\ast}.
\label{lim-(n)Q_{<=m}^*}
\end{equation}
Moreover, it follows from \cite[Section~4.1, Eq.~(9)]{Heid10} that
\begin{eqnarray}
\presub{[n]}\ol{\vc{\pi}}_{\bbZ_m}^{\ast} - \vc{\pi}_{\bbZ_m}^{\ast}
= \presub{[n]}\ol{\vc{\pi}}_{\bbZ_m}^{\ast}
(\presub{[n]}\ol{\vc{Q}}_{\bbZ_m}^{\ast} - \vc{Q}_{\bbZ_m}^{\ast})
\vc{D}_{\bbZ_m}^{\ast}, \quad n \ge m,\qquad
\label{diff-(n)pi_{<=m}^*}
\end{eqnarray}
where $\vc{D}_{\bbZ_m}^{\ast}$ is the deviation matrix of the
transition matrix function with generator $\vc{Q}_{\bbZ_m}^{\ast}$,
i.e.,
\[
\vc{D}_{\bbZ_m}^{\ast}
= \int_0^{\infty}
\left( 
\exp\{\vc{Q}_{\bbZ_m}^{\ast} t\} - \vc{e}\vc{\pi}_{\bbZ_m}^{\ast} 
\right) \rd t.
\]
Applying (\ref{lim-(n)Q_{<=m}^*}) to (\ref{diff-(n)pi_{<=m}^*})
results in (\ref{lim-conditional-02}). Consequently, we have proved that (\ref{lim-conditional}) holds.
\qed
\end{proof}

\section{Matrix-infinite-product-form solutions for block-Hessenberg Markov chains}\label{sec-Hessenberg}

This section considers the case where the ergodic generator
$\vc{Q}$ is in block-Hessenberg form. To this end, we introduce some symbols and rewrite $\vc{Q}$
as a block-structured generator.

Let $m_{\ell}$'s, $\ell \in \bbZ_+$, denote positive integers. Let
$n_{-1} = -1$ and $n_s=\sum_{\ell=0}^s m_{\ell} - 1$ for $s \in
\bbZ_+$. We then partition the state space $\bbZ_+$ into the substate
spaces $\bbL_s$'s, $s \in \bbZ_+$, where
\begin{equation}
\bbL_s = \{n_{s-1}+1,n_{s-1}+2,\dots,n_s\},\qquad s \in \bbZ_+.
\label{defn-L_s}
\end{equation}
We refer to the substate space $\bbL_s$ as {\it level $s$}, 
and partition $\vc{Q}$ level-wise, i.e.,
\begin{equation}
\vc{Q} = 
\bordermatrix{
        & \bbL_0 & \bbL_1    & \bbL_2    & \bbL_3   & \cdots
\cr
\bbL_0 		& 
\vc{Q}_{0,0}  	& 
\vc{Q}_{0,1} 	& 
\vc{Q}_{0,2}  	& 
\vc{Q}_{0,3} 	&  
\cdots
\cr
\bbL_1 		&
\vc{Q}_{1,0}  	& 
\vc{Q}_{1,1} 	& 
\vc{Q}_{1,2}  	& 
\vc{Q}_{1,3} 	&  
\cdots
\cr
\bbL_2 		& 
\vc{Q}_{2,0}  	& 
\vc{Q}_{2,1} 	& 
\vc{Q}_{2,2}  	& 
\vc{Q}_{2,3} 	&  
\cdots
\cr
\bbL_3 & 
\vc{Q}_{3,0}  	& 
\vc{Q}_{3,1} 	& 
\vc{Q}_{3,2}  	& 
\vc{Q}_{3,3} 	&  
\cdots
\cr
~\vdots  	& 
\vdots     		& 
\vdots     		&  
\vdots    		& 
\vdots    		& 
\ddots
},
\label{partitioned-Q}
\end{equation}
where $\vc{Q}_{k,\ell} = (q(i,j))_{(i,j) \in \bbL_k \times
  \bbL_{\ell}}$ for $k,\ell\in\bbZ_+$. By
definition, the cardinality of $\bbL_s$ is equal to $m_s$. 
Thus, $\vc{Q}_{k,\ell}$ is an $m_k \times m_{\ell}$ matrix.
In what follows, we discuss the ergodic generator $\vc{Q}$ partitioned in (\ref{partitioned-Q}). Thus, we partition its stationary distribution vector $\vc{\pi}$ as follows:
\[
\vc{\pi}
=
\bordermatrix{
        & 
\bbL_0 & 
\bbL_1 & 
\bbL_2 & 
\cdots
\cr
			& 
\vc{\pi}_0  & 
\vc{\pi}_1 	& 
\vc{\pi}_2  & 
\cdots
},
\]
where $\vc{\pi}_k = \vc{\pi}_{\bbL_k}$ for $k\in\bbZ_+$ ($\vc{\pi}_{\bbL_k}$ is introduced before Theorem~\ref{thm-submatrix-(n)F}). 

In this section, we consider two linearly augmented truncations. To this end, we use two augmentation distribution vectors $\presub{(s)}\wc{\vc{\alpha}}$ and $\presub{(s)}\wh{\vc{\alpha}}$, whose probability masses are, respectively, concentrated on the first and last blocks, that is,
\begin{eqnarray}
\presub{(s)}\wc{\vc{\alpha}}
&=& \bordermatrix{
        	& 
\bbL_0 		& 
\bbL_1 		& 
\cdots  	& 
\bbL_{s-1} &
\bbL_s 
\cr
			& 
\vc{\alpha}_0  	& 
\vc{0}  	& 
\cdots		&
\vc{0}  	& 
\vc{0} 	
},
\label{defn-(s)alpha_0}
\\
\presub{(s)}\wh{\vc{\alpha}}
&=& \bordermatrix{
        	& 
\bbL_0 		& 
\bbL_1 		& 
\cdots  	& 
\bbL_{s-1} &
\bbL_s 
\cr
			& 
\vc{0}  	& 
\vc{0}  	& 
\cdots		&
\vc{0}  	& 
\vc{\alpha}_s	
}, \qquad s \in \bbZ_+,
\label{defn-(s)alpha_s}
\end{eqnarray}
where $\vc{\alpha}_{\ell}$ is a $1 \times m_{\ell}$ probability vector. 
We then define two linearly augmented truncations $\presub{(s)}\wc{\vc{Q}}$ and $\presub{(s)}\wh{\vc{Q}}$ as follows.
\begin{defn}\label{defn-simple}
For $s \in \bbZ_+$, let
\begin{eqnarray}
\presub{(s)}\wc{\vc{Q}} 
&=& \presub{(s)}\vc{Q} 
- \presub{(s)}\vc{Q} \vc{e} \presub{(s)}\wc{\vc{\alpha}},
\label{defn-(s)ol{Q}_0}
\\
\presub{(s)}\wh{\vc{Q}}
&=& \presub{(s)}\vc{Q} 
- \presub{(s)}\vc{Q} \vc{e} \presub{(s)}\wh{\vc{\alpha}},
\label{defn-(s)ol{Q}_s}
\end{eqnarray}
where $\presub{(s)}\vc{Q} = \presub{[n_s]}\vc{Q}$. 
\end{defn}

Let $\presub{(s)}\wc{\vc{\pi}}$ and $\presub{(s)}\wh{\vc{\pi}}$ denote the stationary distribution vectors of $\presub{(s)}\wc{\vc{Q}}$ and $\presub{(s)}\wh{\vc{Q}}$, respectively. 
It then follows from (\ref{eqn-(n)ol{pi}}) and Definition~\ref{defn-simple} that
\begin{align}
\presub{(s)}\wc{\vc{\pi}}
&= 
{ \vc{\alpha}_0 (- \presub{(s)}\vc{Q})^{-1} 
\over 
\vc{\alpha}_0(- \presub{(s)}\vc{Q})^{-1} \vc{e}
}
=
{ (\vc{\alpha}_0, \vc{0},\dots,\vc{0}) (- \presub{(s)}\vc{Q})^{-1} 
\over 
(\vc{\alpha}_0, \vc{0},\dots,\vc{0})(- \presub{(s)}\vc{Q})^{-1} \vc{e}
}, & s &\in\bbZ_+,
\label{eqn-(s)ol{pi}_0}
\\
\presub{(s)}\wh{\vc{\pi}}
&= { \vc{\alpha}_s (- \presub{(s)}\vc{Q})^{-1} 
\over 
\vc{\alpha}_s(- \presub{(s)}\vc{Q})^{-1} \vc{e}
}
=
{ (\vc{0},\dots,\vc{0}, \vc{\alpha}_s) (- \presub{(s)}\vc{Q})^{-1} 
\over 
(\vc{0},\dots,\vc{0}, \vc{\alpha}_s)(- \presub{(s)}\vc{Q})^{-1} \vc{e}
}, & s &\in\bbZ_+.
\label{eqn-(s)ol{pi}_s}
\end{align}
Note that $\presub{(s)}\vc{Q} = \presub{[n_s]}\vc{Q}$ (see Definition~\ref{defn-simple}), which is partitioned as follows:
\begin{align}
\presub{(s)}\vc{Q}
&=
\bordermatrix{
        & 
\bbL_0 & 
\bbL_1 & 
\cdots  & 
\bbL_s 
\cr
\bbL_0						& 
\vc{Q}_{0,0}  	& 
\vc{Q}_{0,1}  	& 
\cdots						&
\vc{Q}_{0,s}  	 
\cr
\bbL_1						& 
\vc{Q}_{1,0}  	& 
\vc{Q}_{1,1}  	& 
\cdots						&
\vc{Q}_{1,s}  	 
\cr
~\vdots						&
\vdots						&
\vdots						&
\ddots						&
\vdots					
\cr
\bbL_s						& 
\vc{Q}_{s,0}  	& 
\vc{Q}_{s,1}  	& 
\cdots						&
\vc{Q}_{s,s}  	
}, & s &\in \bbZ_+.
\nonumber
\end{align}
For later use, we partition $(-\presub{(s)}\vc{Q})^{-1}$ as
\begin{align}
(-\presub{(s)}\vc{Q})^{-1} 
&=
\bordermatrix{
        			& 
\bbL_0 			& 
\bbL_1 			& 
\cdots  			& 
\bbL_s 
\cr
\bbL_0			& 
\presub{(s)}\vc{X}_{0,0}  	& 
\presub{(s)}\vc{X}_{0,1}  	& 
\cdots			&
\presub{(s)}\vc{X}_{0,s}  	 
\cr
\bbL_1		 	& 
\presub{(s)}\vc{X}_{1,0}  	& 
\presub{(s)}\vc{X}_{1,1}  	& 
\cdots			&
\presub{(s)}\vc{X}_{1,s}  	 
\cr
~\vdots			&
\vdots			&
\vdots			&
\ddots			&
\vdots						
\cr
\bbL_s			& 
\presub{(s)}\vc{X}_{s,0}  	& 
\presub{(s)}\vc{X}_{s,1}  	& 
\cdots			&
\presub{(s)}\vc{X}_{s,s}  	 
}, & s &\in \bbZ_+.
\label{defn-(-Q)^{-1}-MG1-type}
\end{align}
We also partition $\presub{(s)}\wc{\vc{\pi}}$ and $\presub{(s)}\wh{\vc{\pi}}$ as
\begin{align*}
\presub{(s)}\wc{\vc{\pi}}
&=
\bordermatrix{
        & 
\bbL_0 & 
\bbL_1 & 
\cdots  & 
\bbL_s 
\cr
			& 
\presub{(s)}\wc{\vc{\pi}}_{0}  	& 
\presub{(s)}\wc{\vc{\pi}}_{1}  	& 
\cdots			&
\presub{(s)}\wc{\vc{\pi}}_{s}  	
}, & s &\in \bbZ_+,
\\
\presub{(s)}\wh{\vc{\pi}}
&=
\bordermatrix{
        & 
\bbL_0 & 
\bbL_1 & 
\cdots  & 
\bbL_s 
\cr
			& 
\presub{(s)}\wh{\vc{\pi}}_{0} 	& 
\presub{(s)}\wh{\vc{\pi}}_{1}  	& 
\cdots			&
\presub{(s)}\wh{\vc{\pi}}_{s}	
}, & s &\in \bbZ_+.
\end{align*}

The rest of this section is divided into three subsections. Section~\ref{subsec-upper-case} discusses the computation of the stationary distribution vector in 
upper block-Hessenberg Markov chains, based on the sequence
$\{\presub{(s)}\wh{\vc{\pi}};s\in\bbZ_+ \}$ that converges to
the stationary distribution vector $\vc{\pi}$ under some technical conditions. Section~\ref{subsec-lower-case} develops an algorithm for computing the stationary distribution vector $\vc{\pi}$ in lower block-Hessenberg Markov chains, by using the
result in Section~\ref{subsec-upper-case} and the duality of upper and lower BHMCs. This algorithm generates a sequence
$\{\presub{(s)}\wc{\vc{\pi}};s\in\bbZ_+ \}$ that always converges to $\vc{\pi}$. Finally, Section~\ref{subsec-GI-M-1} considers a special case where the generator $\vc{Q}$ is of GI/M/1 type (see, e.g., \cite{Gras00-BC}), i.e., $\vc{Q}$ is a block-Toeplitz-like generator in lower block-Hessenberg form.

\subsection{Upper block-Hessenberg Markov chain}\label{subsec-upper-case}

In this subsection, we assume that the generator $\vc{Q}$ is in upper block-Hessenberg
form (i.e., is of level-dependent M/G/1-type):
\begin{equation}
\vc{Q} = 
\bordermatrix{
        & \bbL_0 & \bbL_1    & \bbL_2    & \bbL_3   & \cdots
\cr
\bbL_0 		& 
\vc{Q}_{0,0}  	& 
\vc{Q}_{0,1} 	& 
\vc{Q}_{0,2}  	& 
\vc{Q}_{0,3} 	&  
\cdots
\cr
\bbL_1 		&
\vc{Q}_{1,0}  	& 
\vc{Q}_{1,1} 	& 
\vc{Q}_{1,2}  	& 
\vc{Q}_{1,3} 	&  
\cdots
\cr
\bbL_2 		& 
\vc{O}  		& 
\vc{Q}_{2,1} 	& 
\vc{Q}_{2,2}  	& 
\vc{Q}_{2,3} 	&  
\cdots
\cr
\bbL_3 & 
\vc{O}  		& 
\vc{O}  		& 
\vc{Q}_{3,2}  	& 
\vc{Q}_{3,3} 	&  
\cdots
\cr
~\vdots  	& 
\vdots     		& 
\vdots     		&  
\vdots    		& 
\vdots    		& 
\ddots
},
\label{defn-Q-MG1-type}
\end{equation}
where $\vc{Q}_{k,\ell} = \vc{O}$ for $k\in\bbZ_+$ and $\ell
=0,1,\dots,\max(k-1,0)$.

\begin{rem}\label{rem-LD-QBD}
If $\vc{Q}$ in (\ref{defn-Q-MG1-type}) is block-tridiagonal, i.e.,
$\vc{Q}_{k,\ell} = \vc{O}$ for all $k,\ell \in \bbZ_+$ such that $|k -
\ell| \ge 2$, then $\vc{Q}$ can be considered the generator of a
level-dependent quasi-birth-and-death process (LD-QBD) (see
\cite{Brig95,Rama96}).
\end{rem}

\subsubsection{MIP-form solution}\label{subsubsec-MIP-upper}

It follows from (\ref{defn-Q-MG1-type}) that
\begin{equation}
\presub{(s)}\vc{Q}
=
\left(
\begin{array}{ccccccc}
\vc{Q}_{0,0} 	&
\vc{Q}_{0,1} 	&
\vc{Q}_{0,2} 	&
\cdots		&
\vc{Q}_{0,s-2} 	&
\vc{Q}_{0,s-1} 	&
\vc{Q}_{0,s} 	
\\
\vc{Q}_{1,0} 	&
\vc{Q}_{1,1} 	&
\vc{Q}_{1,2} 	&
\cdots		&
\vc{Q}_{1,s-2} 	&
\vc{Q}_{1,s-1} 	&
\vc{Q}_{1,s} 	
\\
\vc{O} 		&
\vc{Q}_{2,1} 	&
\vc{Q}_{2,2} 	&
\cdots		&
\vc{Q}_{2,s-2} 	&
\vc{Q}_{2,s-1} 	&
\vc{Q}_{2,s} 	
\\
\vdots		&
\vdots		&
\vdots		&
\ddots		&
\vdots		&
\vdots		&
\vdots		
\\
\vc{O}		&
\vc{O}		&
\vc{O}		&
\cdots		&
\vc{Q}_{s-1,s-2} 	&
\vc{Q}_{s-1,s-1} 	&
\vc{Q}_{s-1,s} 
\\
\vc{O}		&
\vc{O}		&
\vc{O}		&
\cdots		&
\vc{O}		&
\vc{Q}_{s,s-1} 	&
\vc{Q}_{s,s} 
\\
\end{array}
\right),\quad s \in \bbZ_+.
\label{partition-(n_s)Q-upper}
\end{equation}
The NW-corner truncation $\presub{(s)}\vc{Q}$ of $\vc{Q}$ is also in
the upper block-Hessenberg form. Therefore, as we will see later, we
can derive an efficient recursive formula for the last block
\[
(\presub{(s)}\vc{X}_{s,0},\presub{(s)}\vc{X}_{s,1},\dots,\presub{(s)}\vc{X}_{s,s})
\]
of $(-\presub{(s)}\vc{Q})^{-1}$ in (\ref{defn-(-Q)^{-1}-MG1-type}).
To derive this formula, we define $\{\vc{U}_k^{\ast};k\in\bbZ_+\}$
recursively as follows:
\begin{equation}
\vc{U}_k^{\ast}
= 
\left\{
\begin{array}{ll}
(-\vc{Q}_{0,0})^{-1}, & \qquad k=0,
\\
\left(
-\vc{Q}_{k,k} 
- \dm\sum_{\ell=0}^{k-1} \vc{U}_{k,\ell} \vc{Q}_{\ell,k}
\right)^{-1},
& \qquad k \in \bbN,
\end{array}
\right.
\label{defn-U_k^*}
\end{equation}
where $\vc{U}_{k,\ell}$'s, $k \in \bbN$, $\ell \in \bbZ_{k-1}$, are
given by
\begin{equation}
\vc{U}_{k,\ell}
=
(\vc{Q}_{k,k-1}   \vc{U}_{k-1}^{\ast}) 
(\vc{Q}_{k-1,k-2} \vc{U}_{k-2}^{\ast})  \cdots 
(\vc{Q}_{\ell+1,\ell}\vc{U}_{\ell}^{\ast}).
\label{defn-U_{k,l}}
\end{equation}
Since the empty sum is defined as zero, Eq.~(\ref{defn-U_k^*}) is
expressed as the single equation (i.e., the equation for $k \in \bbN$
is extended to the one for $k\in\bbZ_+$). Note here that
$\vc{U}_k^{\ast}$ is nonsingular, which is proved in
Appendix~\ref{appen-T_k^*}. Note also that $\vc{U}_k^{\ast}$ is nonnegative and $\vc{U}_k^{\ast}\vc{e} > \vc{0}$ (see Remark~\ref{rem-nonnegative-U_k^*}).

\medskip

The following lemma provides a matrix-product-form expression of the
$\presub{(s)}\vc{X}_{s,\ell}$'s.
\begin{lem}\label{lem-(n_s)X_{s,l}}
If the ergodic generator $\vc{Q}$ is in upper block-Hessenberg form
(\ref{defn-Q-MG1-type}), then
\begin{align}
&&&&&&&&
\presub{(s)}\vc{X}_{s,\ell} 
&= \vc{U}_s^{\ast} \vc{U}_{s,\ell},
& s \in \bbZ_+, \ell &\in \bbZ_s,&&&&&&&&
\label{eqn-(n_sX_{s,l})}
\\
&&&&&&&&
\vc{e}^{\top}\!\! \presub{(s)}\vc{X}_{s,\ell}
&=  \vc{e}^{\top} \vc{U}_s^{\ast} \vc{U}_{s,\ell}
> \vc{0}, & \ell &\in \bbZ_s,&&&&&&&&
\label{add-170513-02}
\end{align}
where $\vc{U}_{s,s} = \vc{I}$ for $s\in\bbZ_+$.
\end{lem}

\begin{rem}
Shin~\cite{Shin09} presents the similar expressions of all the blocks
$\presub{(s)}\vc{X}_{k,\ell}$'s in a special case where $\vc{Q}$ in
(\ref{defn-Q-MG1-type}) is reduced to be block tridiagonal (see
Theorem~2.1 therein), i.e., to the generator of an LD-QBD (see
Remark~\ref{rem-LD-QBD}).
\end{rem}

\begin{rem}\label{rem-Taki16-00}
It is stated in \cite[Remark 2]{Taki16} that if the ergodic generator
$\vc{Q}$ is in upper block-Hessenberg form (\ref{defn-Q-MG1-type})
then
\begin{equation}
\vc{\pi}_{\ell} = \vc{\pi}_k \vc{U}_{k,\ell},
\qquad k \in \bbN,\ \ell \in \bbZ_{k-1}.
\label{eqn-pi_l-pi_k*U_{k,l}}
\end{equation}
For the reader's convenience, we provide a complete proof of
(\ref{eqn-pi_l-pi_k*U_{k,l}}) in
Appendix~\ref{proof-pi_l-pi_k*U_{k,l}}.
\end{rem}

\noindent
{\it Proof of Lemma~\ref{lem-(n_s)X_{s,l}}~} The inverse
$\presub{(s)}\vc{Q}^{-1}$ of $\presub{(s)}\vc{Q}$ is the unique
solution of $\presub{(s)}\vc{Q}^{-1}\presub{(s)}\vc{Q} =
\vc{I}$. Thus, the last block row
$(\presub{(s)}\vc{X}_{s,0},\presub{(s)}\vc{X}_{s,1},\dots,\presub{(s)}\vc{X}_{s,s})$
of $(-\presub{(s)}\vc{Q})^{-1}$ is the unique solution of the
following equations:
\begin{eqnarray}
\vc{O}
&=&  \presub{(s)}\vc{X}_{s,0} \vc{Q}_{0,0} 
+  \presub{(s)}\vc{X}_{s,1}\vc{Q}_{1,0},
\label{eqn-X-01}
\\
\vc{O}
&=& \sum_{\ell= 0}^{k+1} \presub{(s)}\vc{X}_{s,\ell} \vc{Q}_{\ell,k},
\qquad k=1,2,\dots,s-1,
\label{eqn-X-02}
\\
-\vc{I}
&=& \sum_{\ell= 0}^s \presub{(s)}\vc{X}_{s,\ell} \vc{Q}_{\ell,s}.
\label{eqn-X-03}
\end{eqnarray}
Solving (\ref{eqn-X-01}) with respect to $\presub{(s)}\vc{X}_{s,0}$
and applying (\ref{defn-U_k^*}) to the result, we have
\begin{equation}
\presub{(s)}\vc{X}_{s,0} 
= \presub{(s)}\vc{X}_{s,1}\vc{Q}_{1,0}\vc{U}_0^{\ast}
= \presub{(s)}\vc{X}_{s,1}\vc{U}_{1,0},
\label{eqn-X_{s,l}-product-form-k=0}
\end{equation}
where the second equality follows from (\ref{defn-U_{k,l}}).

We now suppose that, for some $k \in \{1,2,\dots,s-1\}$, 
\begin{equation}
\presub{(s)}\vc{X}_{s,\ell} = \presub{(s)}\vc{X}_{s,k} \vc{U}_{k,\ell}
\quad \mbox{for all $\ell \in \bbZ_{k-1}$},
\label{eqn-X_{s,l}-product-form}
\end{equation}
which holds at least for $k=1$ due to
(\ref{eqn-X_{s,l}-product-form-k=0}). Substituting
(\ref{eqn-X_{s,l}-product-form}) into (\ref{eqn-X-02}) and using
(\ref{defn-U_k^*}), we obtain
\begin{eqnarray*}
\vc{O}
&=& \sum_{\ell= 0}^{k-1} \presub{(s)}\vc{X}_{s,k} \vc{U}_{k,\ell}\vc{Q}_{\ell,k} 
+ \presub{(s)}\vc{X}_{s,k} \vc{Q}_{k,k}
+ \presub{(s)}\vc{X}_{s,k+1} \vc{Q}_{k+1,k}
\nonumber
\\
&=& \presub{(s)}\vc{X}_{s,k}
\left(\vc{Q}_{k,k}
+ \sum_{\ell= 0}^{k-1}  \vc{U}_{k,\ell} \vc{Q}_{\ell,k}
\right)
+ \presub{(s)}\vc{X}_{s,k+1} \vc{Q}_{k+1,k}
\\
&=& \presub{(s)}\vc{X}_{s,k} (-\vc{U}_k^{\ast})^{-1} 
+ \presub{(s)}\vc{X}_{s,k+1} \vc{Q}_{k+1,k},
\end{eqnarray*}
which leads to
\begin{eqnarray}
\presub{(s)}\vc{X}_{s,k}
&=& 
\presub{(s)}\vc{X}_{s,k+1} \vc{Q}_{k+1,k} \vc{U}_k^{\ast}.
\label{recursion-(n_s)X_{s,k}}
\end{eqnarray}
Using (\ref{recursion-(n_s)X_{s,k}}) and (\ref{defn-U_{k,l}}), we
rewrite (\ref{eqn-X_{s,l}-product-form}) as
\begin{eqnarray*}
\presub{(s)}\vc{X}_{s,\ell} 
&=& \presub{(s)}\vc{X}_{s,k+1} \vc{Q}_{k+1,k} \vc{U}_k^{\ast} \vc{U}_{k,\ell}
\nonumber
\\
&=& \presub{(s)}\vc{X}_{s,k+1} \vc{U}_{k+1,\ell}
\qquad \mbox{for all $\ell \in \bbZ_k$}.
\end{eqnarray*}
Therefore, by induction, we have
\begin{equation}
\presub{(s)}\vc{X}_{s,\ell} 
= \presub{(s)}\vc{X}_{s,s} \vc{U}_{s,\ell} 
\quad \mbox{for all $\ell \in \bbZ_{s-1}$}.
\label{eqn-X_{s,l}-02}
\end{equation}

To complete the proof of (\ref{eqn-(n_sX_{s,l})}), we show that $\presub{(s)}\vc{X}_{s,s} = \vc{U}_s^{\ast}$.
Applying (\ref{eqn-X_{s,l}-02}) to (\ref{eqn-X-03}) and following the
derivation of (\ref{recursion-(n_s)X_{s,k}}), we obtain
\begin{eqnarray*}
-\vc{I}
&=& \presub{(s)}\vc{X}_{s,s} \vc{Q}_{s,s}
+ \sum_{\ell= 0}^{s-1}\presub{(s)}\vc{X}_{s,s} \vc{U}_{s,\ell} \vc{Q}_{\ell,s}
\nonumber
\\
&=& \presub{(s)}\vc{X}_{s,s}
\left( 
\vc{Q}_{s,s} + \sum_{\ell= 0}^{s-1} \vc{U}_{s,\ell} \vc{Q}_{\ell,s} \right)
= \presub{(s)}\vc{X}_{s,s} (-\vc{U}_s^{\ast})^{-1},
\end{eqnarray*}
which results in $\presub{(s)}\vc{X}_{s,s} = \vc{U}_s^{\ast}$.

Finally, we prove (\ref{add-170513-02}).
Since $\vc{Q}$ is in upper block-Hessenberg form
(\ref{defn-Q-MG1-type}), the Markov chain $\{Z(t)\}$ must go through $\bbL_s$ to move from $\bigcup_{k=s+1}^{\infty}\bbL_k$ to $\bigcup_{k=0}^{s-1}\bbL_k$. Therefore, for each $s \in \bbN$ and $j \in \bigcup_{k=0}^{s-1}\bbL_k$, there exists at least one state $i \in \bbL_s$ from which the Markov chain $\{Z(t)\}$ can reach state $j \in \bigcup_{k=0}^{s-1}\bbL_k$ avoiding $\bigcup_{k=s+1}^{\infty}\bbL_k$. This implies that, for each $s \in \bbN$, the submatrix $(\presub{(s)}\vc{X}_{s,0},\presub{(s)}\vc{X}_{s,1},\dots,\presub{(s)}\vc{X}_{s,s-1})$ of $(-\presub{(s)}\vc{Q})^{-1}$ has no zero columns. In addition, $\vc{U}_s^{\ast} \ge (-\vc{Q}_{s,s})^{-1}$, which shows that all the diagonal elements of $\presub{(s)}\vc{X}_{s,s} = \vc{U}_s^{\ast}$ are positive. As a result, (\ref{add-170513-02}) holds.
\qed

\medskip

Using Lemma~\ref{lem-(n_s)X_{s,l}}, we obtain an expression of $\presub{(s)}\wh{\vc{\pi}} = (\presub{(s)}\wh{\vc{\pi}}_{0},\presub{(s)}\wh{\vc{\pi}}_{1},\dots,\presub{(s)}\wh{\vc{\pi}}_{s})$ for $s \in \bbZ_+$.
\begin{lem}\label{lem-producet-form-(s)ol{pi}_s}
If the conditions of Lemma~\ref{lem-(n_s)X_{s,l}} are satisfied, then 
\begin{eqnarray}
\presub{(s)}\wh{\vc{\pi}}_{k}
&=&
{
\vc{\alpha}_s \vc{U}_s^{\ast} \vc{U}_{s,k}
\over 
\vc{\alpha}_s \sum_{\ell=0}^s \vc{U}_s^{\ast} \vc{U}_{s,\ell}\vc{e}
}, \qquad s \in \bbZ_+,\ k \in \bbZ_s. 
\label{eqn-(n_s)pi_{n_s,l}}
\end{eqnarray}
\end{lem}

\proof 
It follows from (\ref{eqn-(s)ol{pi}_s}) and (\ref{defn-(-Q)^{-1}-MG1-type}) that %
\begin{eqnarray}
\presub{(s)}\wh{\vc{\pi}}
=
{\vc{\alpha}_s 
(\vc{X}_{s,0},\presub{(s)}\vc{X}_{s,1},\dots,\presub{(s)}\vc{X}_{s,s})
\over
\vc{\alpha}_s \sum_{\ell=0}^s \presub{(s)}\vc{X}_{s,\ell}\vc{e}
}.
\label{eqn-(n_s)pi_{n_s}-02}
\end{eqnarray}
Applying Lemma~\ref{lem-(n_s)X_{s,l}}
to (\ref{eqn-(n_s)pi_{n_s}-02}), we have (\ref{eqn-(n_s)pi_{n_s,l}}).
\qed

\medskip

The following theorem is an immediate consequence of 
Lemma~\ref{lem-producet-form-(s)ol{pi}_s} and Theorem~\ref{thm-lim-[n]F}.
\begin{thm}[Single-limit MIP-form solution]\label{thm-MIP-upper}
Suppose that the ergodic generator $\vc{Q}$ is in upper
block-Hessenberg form (\ref{defn-Q-MG1-type}), and that $\vc{Q}$ satisfies Assumption~\ref{assumpt-1}. If Assumption~\ref{assumpt-2}, with replaced $\presub{[n]}\ol{\vc{Q}}$ by $\presub{(s)}\wh{\vc{Q}}$, holds for $s \in \bbZ_+$, then
\begin{equation}
\lim_{s\to\infty} \|\presub{(s)}\wh{\vc{\pi}} - \vc{\pi}\| = 0,
\label{lim-(s)wh{pi}}
\end{equation}
and thus
\begin{eqnarray}
\vc{\pi}_k
&=& \lim_{s\to\infty}{ 
\vc{\alpha}_s \vc{U}_s^{\ast} \vc{U}_{s,k}
\over 
\vc{\alpha}_s \sum_{\ell=0}^s \vc{U}_s^{\ast} \vc{U}_{s,\ell}\vc{e}
},
\qquad k \in \bbZ_+.
\label{pi_l-MIP-upper-case}
\end{eqnarray}
\end{thm}

\medskip

We now define $\vc{U}_k$, $k\in\bbZ_+$, as
\begin{equation*}
\vc{U}_k = \vc{Q}_{k+1,k} \vc{U}_k^{\ast},
\qquad k \in \bbZ_+.
\end{equation*}
It then follows from (\ref{defn-U_{k,l}}) that
\begin{equation}
\vc{U}_{s,\ell} = \vc{U}_{s-1} \vc{U}_{s-2} \cdots \vc{U}_{\ell},
\qquad \ell \in \bbZ_{s-1}.
\label{defn-U_{s,l}}
\end{equation}
Substituting this into (\ref{pi_l-MIP-upper-case}) yields
\begin{equation}
\vc{\pi}_k
= \lim_{s\to\infty}{ 
\vc{\alpha}_s \vc{U}_s^{\ast}
\vc{U}_{s-1} \vc{U}_{s-2} \cdots \vc{U}_k
\over 
\vc{\alpha}_s 
\sum_{\ell=0}^s \vc{U}_s^{\ast} \vc{U}_{s-1} \vc{U}_{s-2} \cdots \vc{U}_{\ell} \vc{e}
},
\qquad k \in \bbZ_+,
\label{pi_l-MIP-upper-case-02}
\end{equation}
which shows that $\vc{\pi}_k$ is expressed as a (normalized) matrix infinite product. We thus refer to the
expression (\ref{pi_l-MIP-upper-case-02}), or equivalently,
(\ref{pi_l-MIP-upper-case}) as the {\it matrix-infinite-product-form
  (MIP-form) solution}. We also refer to this MIP-form solution as the {\it single-limit MIP-form solution} to emphasize that the solution is expressed by single limit (i.e., taking the limit with respect to the single variable $n$).

\subsubsection{Computation of $\vc{\pi}$ based on the MIP-form solution}\label{subsub-sec-computation-upper}

We discuss the computation of $\vc{\pi}=(\vc{\pi}_0,\vc{\pi}_1,\dots)$ based on (\ref{lim-(s)wh{pi}}) and the MIP-from solution (\ref{pi_l-MIP-upper-case}). From
(\ref{eqn-(n_s)pi_{n_s,l}}), we have
\begin{eqnarray}
\presub{(s)}\wh{\vc{\pi}}_{k}
= {
\vc{\alpha}_s \vc{U}^{\ast}_{s,k}
\over 
\vc{\alpha}_s \vc{u}^{\ast}_s
},
\qquad s \in \bbZ_+,\ k \in \bbZ_s,
\label{defn-x_{s,l}}
\end{eqnarray}
where
\begin{align}
\vc{U}^{\ast}_{s,k}
&= \vc{U}_s^{\ast} \vc{U}_{s,k},
& s &\in \bbZ_+,\ k \in \bbZ_s,
\label{defn-overline{U}_{s,l}}
\\
\vc{u}^{\ast}_s
&= \sum_{\ell=0}^s \vc{U}^{\ast}_{s,\ell}\vc{e}
=   \sum_{\ell=0}^s \vc{U}_s^{\ast} \vc{U}_{s,\ell}\vc{e},
& s &\in \bbZ_+.
\label{defn-overline{u}_s}
\end{align}
Combining (\ref{defn-overline{U}_{s,l}}) with
(\ref{defn-U_k^*}) and (\ref{defn-U_{k,l}}), we
have
\begin{eqnarray}
\vc{U}^{\ast}_{0,0}
&=& \vc{U}^{\ast}_0 = (-\vc{Q}_{0,0})^{-1},
\label{eqn-U_{0,0}}
\\
\vc{U}^{\ast}_{s,k}
&=& 
\left\{
\begin{array}{lll}
\vc{U}_s^{\ast} \vc{Q}_{s,s-1} \cdot \vc{U}^{\ast}_{s-1,k},
& \quad s \in \bbN,\ & k \in \bbZ_{s-1},
\\
\vc{U}_s^{\ast},
& \quad s \in \bbN,\ &  k = s.
\end{array}
\right.
\label{eqn-U_{s-1,l}}
\end{eqnarray}
From (\ref{defn-overline{u}_s}), (\ref{eqn-U_{0,0}}) and
(\ref{eqn-U_{s-1,l}}), we also have
\begin{eqnarray}
\vc{u}^{\ast}_0
&=& \vc{U}_0^{\ast} \vc{e} = (-\vc{Q}_{0,0})^{-1}\vc{e},
\label{eqn-u^*_0}
\\
\vc{u}^{\ast}_s
&=& \vc{U}_s^{\ast} 
\left( \vc{e} + \vc{Q}_{s,s-1} \vc{u}^{\ast}_{s-1}
\right),
\qquad s \in \bbN.
\label{recursion-u^*_s}
\end{eqnarray}
Furthermore, applying (\ref{defn-U_{k,l}})
to (\ref{defn-U_k^*}) with $k=s \in \bbN$, we obtain
\begin{eqnarray}
\vc{U}_s^{\ast}
&=& 
\left(
- \vc{Q}_{s,s} 
- \vc{Q}_{s,s-1} \sum_{\ell=0}^{s-1} \vc{U}_{s-1}^{\ast}\vc{U}_{s-1,\ell}\vc{Q}_{\ell,s}
\right)^{-1}
\nonumber
\\
&=& 
\left(
- \vc{Q}_{s,s} 
- \vc{Q}_{s,s-1} \sum_{\ell=0}^{s-1} \vc{U}_{s-1,\ell}^{\ast}\vc{Q}_{\ell,s}
\right)^{-1},
\qquad s \in \bbN,
\label{eqn-U_s^*}
\end{eqnarray}
where the second equality follows from
(\ref{defn-overline{U}_{s,l}}).

As a result, under these assumptions, we can establish an algorithm for
computing $\vc{\pi}$ under the conditions of Theorem~\ref{thm-MIP-upper}.

\medskip

{
\renewcommand{\baselinestretch}{0.8}\selectfont
\begin{algorithm}[H]
\caption{Upper block-Hessenberg Markov chain}\label{algo-upper}
{\bf Input}: $\vc{Q}$, $\varepsilon \in (0,1)$, $\{\vc{\alpha}_0,\vc{\alpha}_1,\vc{\alpha}_2,\dots\}$.
\\ 
{\bf Output}: 
$\presub{(s)}\wh{\vc{\pi}} = (\presub{(s)}\wh{\vc{\pi}}_{0},\presub{(s)}\wh{\vc{\pi}}_{1},\dots,\presub{(s)}\wh{\vc{\pi}}_{s})$,
where $s \in \bbZ_+$ is fixed when the iteration stops.
%
\smallskip
\begin{enumerate}
\item Set $s = 0$.
\item Compute $\vc{U}_0^{\ast} = \vc{U}_{0,0}^{\ast} =
  (-\vc{Q}_{0,0})^{-1}$ and $\vc{u}_0^{\ast} = \vc{U}_0^{\ast}\vc{e}$.
\item Compute $\presub{(0)}\ol{\vc{\pi}}_0 = \vc{\alpha}_0\vc{U}_0^{\ast}/(\vc{\alpha}_0\vc{u}_0^{\ast})$.
\item Iterate the following:
\begin{enumerate}
\item Increment $s$ by one.
\item Compute $\vc{U}_s^{\ast}=\vc{U}_{s,s}^{\ast}$ by (\ref{eqn-U_s^*}).
\item Compute $\{\vc{U}_{s,k}^{\ast};k \in \bbZ_{s-1}\}$ by
  (\ref{eqn-U_{s-1,l}}) and  $\vc{u}^{\ast}_s$ by
  (\ref{recursion-u^*_s}).
\item Compute $\{\presub{(s)}\wh{\vc{\pi}}_{k};k\in\bbZ_s\}$ by (\ref{defn-x_{s,l}}).
\item If $\| \presub{(s)}\wh{\vc{\pi}} - \presub{(s-1)}\wh{\vc{\pi}} \| < \varepsilon$, then stop the
  iteration; otherwise return to step~(a).
\end{enumerate}
\end{enumerate}
\end{algorithm}
\renewcommand{\baselinestretch}{1.1}\selectfont
}

\begin{rem}\label{add-rem-algo-1}
If the conditions of Theorem~\ref{thm-MIP-upper} does not hold, then Algorithm~\ref{algo-upper} may not stop after a finite number of iterations. 
\end{rem}

\begin{rem}\label{rem-algo-1}
If Algorithm~\ref{algo-upper} stops at $s = N$, then an $N+1$ number
of the inverse matrices $\vc{U}_s^{\ast}$'s are computed because one
inverse matrix $\vc{U}_s^{\ast}$ is computed for each $s \in
\bbZ_+$. The computation of such inverse matrices is the most
time-consuming part of Algorithm~\ref{algo-upper}. However, we can
compute $\vc{U}_s^{\ast}$ by a stable and efficient procedure proposed
by Le Boudec~\cite{Le-Boud91} (see Proposition~\ref{prop-Le-Boud91} in
Appendix~\ref{appen-comp-U_k^*}).
\end{rem}

\subsubsection{Comparison with previous studies}

We discuss the comparison of our  Algorithm~\ref{algo-upper} with the existing ones in previous studies. As mentioned in Remark~\ref{rem-LD-QBD}, the LD-QBD is a special case
of upper BHMCs. Thus,
Algorithm~\ref{algo-upper} is applicable to LD-QBDs and works more
efficiently for them because (\ref{eqn-U_s^*}) is reduced to
\begin{eqnarray*}
\vc{U}_s^{\ast}
&=& 
\left(
- \vc{Q}_{s,s} 
- \vc{Q}_{s,s-1}  \vc{U}_{s-1,s-1}^{\ast}\vc{Q}_{s-1,s}
\right)^{-1},
\qquad s \in \bbN.
\end{eqnarray*}
It is well-known (see \cite{Brig95,Rama96}) that if $\vc{Q}$ in
(\ref{defn-Q-MG1-type}) is reduced to be the generator of the LD-QBD
then the stationary distribution vector
$\vc{\pi}=(\vc{\pi}_0,\vc{\pi}_1,\dots)$ is in the matrix product form
\begin{equation*}
\vc{\pi}_k = \vc{\pi}_0 \vc{R}^{(1)}\vc{R}^{(2)} \cdots \vc{R}^{(k)},
\qquad k \in \bbN,
\end{equation*}
where $\vc{\pi}_0$ is the solution of
\begin{eqnarray*}
\vc{\pi}_0(\vc{Q}_{0,0} + \vc{R}^{(1)}\vc{Q}_{1,0}) &=& \vc{0},
\\
\vc{\pi}_0 \left(\vc{e} + \sum_{\ell=1}^{\infty}  
\vc{R}^{(1)}\vc{R}^{(2)} \cdots \vc{R}^{(\ell)} \vc{e}\right) &=& 1,
\end{eqnarray*}
and where the matrices $\vc{R}^{(k)}$'s are the minimal nonnegative
solutions of
\begin{eqnarray}
\vc{Q}_{k-1,k} + \vc{R}^{(k)}\vc{Q}_{k,k} + \vc{R}^{(k)}\vc{R}^{(k+1)} \vc{Q}_{k+1,k}
= \vc{O},\qquad k \in \bbN.
\label{eqn-R^{(k)}}
\end{eqnarray}
From (\ref{eqn-R^{(k)}}), we obtain
\begin{equation}
\vc{R}^{(k)}
= \vc{Q}_{k-1,k}( - \vc{Q}_{k,k} - \vc{R}^{(k+1)} \vc{Q}_{k+1,k})^{-1},
\quad k \in \bbN,
\label{recursion-R^{(k)}}
\end{equation}
which enables us to compute the matrices
$\vc{R}^{(N-1)},\vc{R}^{(N-2)},\dots,\vc{R}^{(1)}$, given
$\vc{R}^{(N)}$ for some $N \in \bbN$. Thus, we refer to
(\ref{recursion-R^{(k)}}) as the backward recursion for
$\{\vc{R}^{(k)};k\in \bbN\}$.

Based on the above results, Phung-Duc et al.~\cite{Phun10-QTNA}
propose a simple algorithm for LD-QBDs (a similar algorithm is
discussed in \cite{Baum12-Procedia}). According to the algorithm, we first
choose a sufficiently large $N \in \bbN$ such that
$\sum_{k=N+1}^{\infty} \vc{\pi}_k\vc{e}$ is expected to be
negligible. Next, for a sufficiently large $L \in \bbN$, we compute an
approximation $\vc{R}_L^{(N)}$ to $\vc{R}^{(N)}$ by the backward
recursion (\ref{recursion-R^{(k)}}) with $\vc{R}^{(N+L)} = \vc{O}$. It
is shown (see \cite[Proposition~2.4]{Phun10-QTNA}) that
$\lim_{L\to\infty}\vc{R}_L^{(N)} = \vc{R}^{(N)}$.  However, we have to
determine $L$ by trial and error (for the details, see
\cite[Algorithms 1 and 3]{Phun10-QTNA}).

Bright and Taylor~\cite{Brig95} develop an elaborate algorithm for
LD-QBDs, which generates $\vc{R}_L^{(N)}$ with $L=2^{\ell+1} - 1$
($\ell\in\bbZ_+$) by using the logarithmic reduction approach originally developed for {\it level-independent} QBDs (see \cite{Lato93}). However, logarithmic reduction does not work well for LD-QBDs. Indeed, Bright and Taylor's algorithm \cite{Brig95} has the same computational complexity as 
that of Phung-Duc et al.'s algorithm~\cite{Phun10-QTNA}, and the former is more memory-consuming than the latter (for details, see \cite[Section~3.2]{Phun10-QTNA}).

We now suppose that $L$ is given in advance. In this case, Phung-Duc
et al.'s algorithm~\cite{Phun10-QTNA} computes $L$ inverse matrices to
obtain $\vc{R}_L^{(N)}$, and then computes $N-1$ inverse matrices in
generating $\vc{R}_L^{(k)}$'s, $k=N-1,N-2,\dots,1$, by the backward
recursion (\ref{recursion-R^{(k)}}) with
$\vc{R}^{(N)}=\vc{R}_L^{(N)}$, in order to obtain
$(\vc{\pi}_0,\vc{\pi}_1,\dots,\vc{\pi}_N)$ (of course, approximately).
It should be noted that the computation of $\vc{R}_L^{(N)}$ corresponds to the
situation that the iteration index $s$ of Algorithm~\ref{algo-upper}
reaches $N+L-1$ and thus $\presub{(N+L-1)}\ol{\vc{\pi}}$, which is an
approximation to $(\vc{\pi}_0,\vc{\pi}_1,\dots,\vc{\pi}_{N+L-1})$, is
obtained as the result of computing $N+L$ inverse matrices (see
Remark~\ref{rem-algo-1}). Consequently, Algorithm~\ref{algo-upper} has the same  computational complexity as that of
Phung-Duc et al.'s algorithm~\cite{Phun10-QTNA} even when the
situation is best for the latter one, i.e., $L$ is given in advance.

There are some studies on upper BHMCs.  Shin
and Pearce~\cite{Shin98} establish an algorithm for computing the
stationary distribution vector of the discrete-time upper
BHMC. Similar algorithms are proposed
for a BMAP/M/1 generalized processor-sharing queue (Li et al.~\cite{LiQuan05-RG}) and for an asymptotically level-independent M/G/1-type Markov chain (Klimenok and Dudin~\cite{Klim06}). These algorithms transform the original transition probability matrix (or the original generator in the continuous-time case) into a level-independent one (i.e., block-Toeplitz-like one) except for a finite number, say $N$, of levels. They thus require us to compute,
from scratch, Neuts' $G$-matrix \cite{Neut89} and the stationary
probabilities of the first $N$ levels (i.e., $\bbL_0,\bbL_1,\dots,\bbL_{N-1}$) every time $N$ is incremented one by one. On the other hand, each iteration of Algorithm~\ref{algo-upper} inherits the results from the previous iteration.

In the rest of this subsection, we compare Takine~\cite{Taki16}'s results with our results presented in Sections~\ref{subsubsec-MIP-upper} and \ref{subsub-sec-computation-upper}. For this purpose, we first present a corollary of Theorem~\ref{thm-submatrix-(n)F} and Lemma~\ref{lem-(n_s)X_{s,l}}.
\begin{coro}\label{coro-Takine-thm3}
Suppose that the ergodic generator $\vc{Q}$ is in upper block-Hessenberg form
(\ref{defn-Q-MG1-type}). We then have
\begin{equation}
\lim_{s\to\infty}
{
\vc{e}^{\top} \vc{U}_s^{\ast} \vc{U}_{s,k} \over 
\vc{e}^{\top} \vc{U}_s^{\ast} \vc{U}_{s,k}\vc{e}
}
= {\vc{\pi}_k \over \vc{\pi}_k \vc{e} },
\qquad k \in \bbZ_+.
\label{limt-Takine-02}
\end{equation}
Furthermore, if there exists some $k \in \bbZ_+$ such that
\begin{equation}
\vc{U}_s^{\ast} \vc{U}_{s,k}\vc{e} > \vc{0}\quad
\mbox{for all sufficiently large $s \in \ol{\bbZ}_k$},
\label{cond-170514-01}
\end{equation}
then
\begin{equation}
\lim_{s\to\infty} \diag^{-1}\{\vc{U}_s^{\ast} \vc{U}_{s,k}\vc{e}\} 
\vc{U}_s^{\ast} \vc{U}_{s,k}
= {\vc{e}\vc{\pi}_k \over \vc{\pi}_k \vc{e} }.
\label{limt-Takine-02-add}
\end{equation}
\end{coro}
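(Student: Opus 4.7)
The plan is to derive both (\ref{limt-Takine-02}) and (\ref{limt-Takine-02-add}) as instances of Theorem~\ref{thm-submatrix-(n)F} with $\bbB=\bbL_k$ and two different choices of the augmentation distribution $\presub{n_s}\vc{\alpha}$. The preliminary identity, which falls out of Assumption~\ref{assumpt-(n_s)alpha} and Lemma~\ref{lem-(n_s)X_{s,l}}, is
\[
\presub{n_s}\vc{\mu}_{\bbL_k}
= \presub{n_s}\vc{\alpha}(-\presub{n_s}\vc{Q})^{-1}\presub{n_s}\vc{E}_{\bbL_k}
= \presub{n_s}\vc{\alpha}_s\,\presub{n_s}\vc{X}_{s,k}
= \presub{n_s}\vc{\alpha}_s\,\vc{U}_s^{\ast}\vc{U}_{s,k}.
\]
What then remains is to pick $\presub{n_s}\vc{\alpha}_s$ so that the ratio $\presub{n_s}\vc{\mu}_{\bbL_k}/(\presub{n_s}\vc{\mu}_{\bbL_k}\vc{e})$ becomes the left-hand side of the desired limit, and to verify the positivity condition $\presub{n_s}\vc{\mu}_{\bbL_k}\vc{e}>0$ required by Theorem~\ref{thm-submatrix-(n)F}.

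For (\ref{limt-Takine-02}) I would take $\presub{n_s}\vc{\alpha}_s = m_s^{-1}\vc{e}^{\top}$. The scalar $m_s^{-1}$ then cancels between numerator and denominator, and the left-hand side of (\ref{limt-Takine-02}) emerges. To verify positivity, I would invoke Lemma~\ref{lem-Z_m} with any $m\ge n_k$: the lemma places $\bbL_k\subseteq\bbZ_m$ inside the single closed communicating class of $\presub{n_s}\ol{\vc{Q}}$ for all sufficiently large $s$, so $\presub{n_s}\ol{\vc{\pi}}$ assigns strictly positive mass to $\bbL_k$; since (\ref{eqn-(n)ol{pi}}) represents $\presub{n_s}\ol{\vc{\pi}}$ as $\presub{n_s}\vc{\alpha}(-\presub{n_s}\vc{Q})^{-1}$ up to a positive normalising scalar, this forces $\presub{n_s}\vc{\mu}_{\bbL_k}\vc{e}>0$ for large $s$. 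Theorem~\ref{thm-submatrix-(n)F} then delivers the limit $\vc{\pi}_{\bbL_k}^{\ast}=\vc{\pi}_k/(\vc{\pi}_k\vc{e})$, which is (\ref{limt-Takine-02}).

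For (\ref{limt-Takine-02-add}) I would instead take $\presub{n_s}\vc{\alpha}_s$ to be the unit row vector singling out an arbitrary state $i\in\bbL_s$. With this choice $\presub{n_s}\vc{\mu}_{\bbL_k}$ is the $i$-th row of $\vc{U}_s^{\ast}\vc{U}_{s,k}$, $\presub{n_s}\vc{\mu}_{\bbL_k}\vc{e}$ is the $i$-th entry of $\vc{U}_s^{\ast}\vc{U}_{s,k}\vc{e}$, and the ratio is precisely the $i$-th row of $\diag^{-1}\{\vc{U}_s^{\ast}\vc{U}_{s,k}\vc{e}\}\vc{U}_s^{\ast}\vc{U}_{s,k}$. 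The hypothesis (\ref{cond-170514-01}) is exactly what asserts this $i$-th entry is positive for every $i\in\bbL_s$ and every sufficiently large $s$, so Theorem~\ref{thm-submatrix-(n)F} applies for each such $i$ and gives convergence of the corresponding row to $\vc{\pi}_k/(\vc{\pi}_k\vc{e})$; the row-wise convergence of the full matrix to $\vc{e}\vc{\pi}_k/(\vc{\pi}_k\vc{e})$ claimed in (\ref{limt-Takine-02-add}) then follows.

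The only real obstacle is the positivity condition $\presub{n_s}\vc{\mu}_{\bbL_k}\vc{e}>0$ needed to engage Theorem~\ref{thm-submatrix-(n)F}; Lemma~\ref{lem-Z_m} carries the weight for the first assertion without requiring Assumption~\ref{assumpt-f-drift} or \ref{assumpt-truncation}, and the newly added hypothesis (\ref{cond-170514-01}) does so for the second.
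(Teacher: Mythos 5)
Your overall architecture is the paper's: identify $\presub{n_s}\vc{\mu}_{\bbL_k}=\presub{n_s}\vc{\alpha}_s\vc{U}_s^{\ast}\vc{U}_{s,k}$ via Lemma~\ref{lem-(n_s)X_{s,l}}, then feed Theorem~\ref{thm-submatrix-(n)F} with $\bbB=\bbL_k$, using the uniform augmentation $\presub{n_s}\vc{\alpha}_s=m_s^{-1}\vc{e}^{\top}$ for (\ref{limt-Takine-02}) and the unit-vector augmentations $\presub{n_s}\vc{\alpha}_s=\presub{m_s}\vc{e}_{\{\nu\}}^{\top}$, $\nu=1,\dots,m_s$, for (\ref{limt-Takine-02-add}). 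Your treatment of the second assertion coincides with the paper's, where (\ref{cond-170514-01}) is read off as exactly the positivity hypothesis of Theorem~\ref{thm-submatrix-(n)F}, row by row.

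The gap is in how you verify $\presub{n_s}\vc{\mu}_{\bbL_k}\vc{e}>0$ for the first assertion. You invoke Lemma~\ref{lem-Z_m} to place $\bbL_k$ inside the single closed communicating class of $\presub{n_s}\ol{\vc{Q}}$ and then conclude positivity of $\presub{n_s}\ol{\vc{\pi}}$ on $\bbL_k$. But that chain of reasoning makes no use of which augmentation distribution you chose: applied verbatim with $\presub{n_s}\vc{\alpha}_s=\presub{m_s}\vc{e}_{\{\nu\}}^{\top}$ it would yield $\vc{U}_s^{\ast}\vc{U}_{s,k}\vc{e}>\vc{0}$ for all large $s$ automatically, i.e.\ it would render (\ref{cond-170514-01}) redundant --- which the surrounding discussion explicitly denies ($\presub{n_s}\vc{X}_{s,k}=\vc{U}_s^{\ast}\vc{U}_{s,k}$ can have zero rows: if every transition out of a state $i\in\bbL_s$ leads above level $s$, row $i$ of $(-\presub{n_s}\vc{Q})^{-1}$ vanishes off the diagonal). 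The point is that under Assumption~\ref{assumpt-(n_s)alpha} the support of $\presub{n_s}\vc{\alpha}$ lies in $\bbL_s$ and drifts to infinity, and for such augmentations the conclusion you want from Lemma~\ref{lem-Z_m} is precisely what is in doubt: the single closed class must meet the support of $\presub{n_s}\vc{\alpha}$, but if that support concentrates on a state of $\bbL_s$ that cannot reach $\bbZ_{n_{s-1}}$ inside the truncation, that state becomes absorbing in $\presub{n_s}\ol{\vc{Q}}$ and is itself the closed class, excluding $\bbL_k$. What actually rescues the uniform augmentation is the skip-free-downward structure of (\ref{defn-Q-MG1-type}): for every $j\in\bigcup_{\ell=0}^{s-1}\bbL_{\ell}$ there is \emph{some} $i\in\bbL_s$ with $[(-\presub{n_s}\vc{Q})^{-1}]_{i,j}>0$ (last exit from $\bbL_s$ on a path down to $j$), so that averaging over all of $\bbL_s$ gives $\vc{e}^{\top}\vc{U}_s^{\ast}\vc{U}_{s,k}>\vc{0}$ for every $s\in\ol{\bbZ}_k$. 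This structural argument is the paper's proof of the first assertion and is the ingredient your write-up is missing; without it the positivity step does not go through.
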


\begin{proof}
In the proof of this corollary, we fix $\bbB=\bbL_k$ and $n=n_s$. It follows from (\ref{eqn-cup-B_n^+}) in Lemma~\ref{lem-B_n^+} that, for each $k \in \bbZ_s$, at least one row of $(-\presub{[n_s]}\vc{Q})^{-1} \presub{[n_s]}\vc{E}_{\bbL_k}$ is nonnegative and thus $\bbB_{n_s}^+ \cap \bbL_k \neq \varnothing$.
We now fix 
\[
\presub{[n_s]}\vc{\alpha}_{\bbL_k} 
=
\bordermatrix{
& \bbL_0 & \bbL_1 & \cdots &\bbL_{s-1} & \bbL_s 
\cr 
& \vc{0} & \vc{0} & \cdots & \vc{0}    & \vc{e}^{\top}/m_s
}.
\]
Substituting this into (\ref{defn-[n]ol{pi}_B^*}) (with $n=n_s$ and $\bbB=\bbL_k$), we have
\begin{eqnarray}
\presub{[n_s]}\ol{\vc{\pi}}_{\bbL_k}^{\ast}
&=& {
(\vc{0},\dots,\vc{0},\vc{e}^{\top}/m_s)
(-\presub{[n_s]}\vc{Q})^{-1} \presub{[n_s]}\vc{E}_{\bbL_k}
\over
(\vc{0},\dots,\vc{0},\vc{e}^{\top}/m_s)
(-\presub{[n_s]}\vc{Q})^{-1} \presub{[n_s]}\vc{E}_{\bbL_k} \vc{e}
}
\nonumber
\\
&=& 
{
\vc{e}^{\top} \presub{(s)}\vc{X}_{s,k}
\over
\vc{e}^{\top} \presub{(s)}\vc{X}_{s,k} \vc{e}
}
=
{\vc{e}^{\top} \vc{U}_s^{\ast} \vc{U}_{s,k}
\over
\vc{e}^{\top} \vc{U}_s^{\ast} \vc{U}_{s,k}\vc{e}
},
\label{eqn-171020-03}
\end{eqnarray}
where the second and third equalities are due to (\ref{defn-(-Q)^{-1}-MG1-type}) and Lemma~\ref{lem-(n_s)X_{s,l}}, respectively.
Combining (\ref{eqn-171020-03}) and Theorem~\ref{thm-submatrix-(n)F} to this equation yields
\[
\lim_{s\to\infty}
{
\vc{e}^{\top}\vc{U}_s^{\ast} \vc{U}_{s,k}
\over 
\vc{e}^{\top}\vc{U}_s^{\ast} \vc{U}_{s,k} \vc{e}
}
= {\vc{\pi}_k \over \vc{\pi}_k\vc{e}},\qquad k \in \bbZ_+,
\]
which shows that (\ref{limt-Takine-02}) holds.

Next, we prove (\ref{limt-Takine-02-add}), which requires the additional condition (\ref{cond-170514-01}) with some $k \in \bbZ_+$. Under this additional condition, 
\[
\bbB_{n_s}^+ \supseteq \bbL_k\quad \mbox{for all sufficiently large $s \in \ol{\bbZ}_k$}.
\]
To proceed further, fix
\[
\presub{[n_s]}\vc{\alpha}_{\bbL_k} 
= (\vc{0},\dots,\vc{0},\presub{[m_s]}\vc{e}_{\ang{\nu}}^{\top}/m_s).
\]
It then follows from (\ref{defn-(-Q)^{-1}-MG1-type}) and Lemma~\ref{lem-(n_s)X_{s,l}} and (\ref{defn-[n]ol{pi}_B^*}) (with $n=n_s$ and $\bbB=\bbL_k$) that
\begin{equation}
\presub{[n_s]}\ol{\vc{\pi}}_{\bbL_k}^{\ast}
= {
\presub{[m_s]}\vc{e}_{\ang{\nu}}^{\top}\vc{U}_s^{\ast} \vc{U}_{s,k}
\over 
\presub{[m_s]}\vc{e}_{\ang{\nu}}^{\top}\vc{U}_s^{\ast} \vc{U}_{s,k} \vc{e}
},\qquad k \in \bbZ_+,\ s \in \ol{\bbZ}_k,~\nu=1,2,\dots,m_s.
\label{eqn-170630-01}
\end{equation}
Therefore, from (\ref{eqn-170630-01}), Theorem~\ref{thm-submatrix-(n)F} and $\vc{\pi}_{\bbL_k}=\vc{\pi}_k$, we obtain
\[
\lim_{s\to\infty} 
{
\presub{[m_s]}\vc{e}_{\ang{\nu}}^{\top}\vc{U}_s^{\ast} \vc{U}_{s,k}
\over 
\presub{[m_s]}\vc{e}_{\ang{\nu}}^{\top}\vc{U}_s^{\ast} \vc{U}_{s,k} \vc{e}
}
= {\vc{\pi}_k \over \vc{\pi}_k \vc{e} }\quad \mbox{for all $\nu=1,2,\dots,m_s$}, \]
which implies that (\ref{limt-Takine-02-add}) holds.
\qed
\end{proof}

\medskip

The first limit formula (\ref{limt-Takine-02}) of Corollary~\ref{coro-Takine-thm3} holds for 
the general ergodic generator $\vc{Q}$ in the
upper block-Hessenberg form (\ref{defn-Q-MG1-type}). On the other hand, the second one (\ref{limt-Takine-02-add}) requires the additional condition that (\ref{cond-170514-01}) holds for some $k \in \bbZ_+$.
A similar formula to (\ref{limt-Takine-02-add}) is presented by Takine~\cite{Taki16} (see Theorem~3 therein):
\begin{equation}
\lim_{s\to\infty}
\diag^{-1}\{\vc{U}_{s,k}\vc{e}\} \cdot \vc{U}_{s,k} 
= { \vc{e}\vc{\pi}_k \over \vc{\pi}_k \vc{e} },
\qquad k \in \bbZ_+,
\label{limt-Takine-03}
\end{equation}
under \cite[Assumption 1]{Taki16}\footnote{This additional assumption is removed by \cite{M.Kimu18}.} that, for
all sufficiently large $s \in \bbN$, the $\vc{Q}_{s,s-1}$'s are nonsingular
and the $\vc{Q}_{s,s}$'s are of the same order. 

Assumption 1 in \cite{Taki16} implies that $\vc{U}_{s,k}\vc{e} > \vc{0}$ for all sufficiently large
$s \in \bbN$ (see (21) therein) and thus (\ref{cond-170514-01}) holds. Therefore, under this assumption, we have (\ref{limt-Takine-02-add}). In fact, from
(\ref{limt-Takine-02-add}), we can derive 
 (\ref{limt-Takine-03}) as follows:
\begin{eqnarray*}
\lefteqn{
\lim_{s\to\infty}
\diag^{-1}\{\vc{U}_{s,k}\vc{e}\}
\cdot \vc{U}_{s,k} 
}
\quad &&
\nonumber
\\
&=& \lim_{s\to\infty}
\diag^{-1}\{\vc{U}_{s,k}\vc{e}\}
(\vc{U}_s^{\ast})^{-1}
\diag\{\vc{U}_s^{\ast} \vc{U}_{s,k}\vc{e}\}
\nonumber
\\
&& {} \qquad~~ \times 
\diag^{-1}\{\vc{U}_s^{\ast} \vc{U}_{s,k}\vc{e}\}
\cdot \vc{U}_s^{\ast} \vc{U}_{s,k} 
\nonumber
\\
&=& \lim_{s\to\infty}
\diag^{-1}\{\vc{U}_{s,k}\vc{e}\}
\cdot (\vc{U}_s^{\ast})^{-1}
\diag\{\vc{U}_s^{\ast} \vc{U}_{s,k}\vc{e}\} 
\cdot {\vc{e} \vc{\pi}_k \over \vc{\pi}_k \vc{e} }
\nonumber
\\
&=& \lim_{s\to\infty}
\diag^{-1}\{\vc{U}_{s,k}\vc{e}\}
\cdot \vc{U}_{s,k}\vc{e}
{\vc{\pi}_k \over \vc{\pi}_k \vc{e} }
\nonumber
\\
&=& {\vc{e}\vc{\pi}_k \over \vc{\pi}_k \vc{e} },
\end{eqnarray*}
where the third equality holds because 
\begin{eqnarray*}
(\vc{U}_s^{\ast})^{-1}
\diag\{\vc{U}_s^{\ast} \vc{U}_{s,k}\vc{e}\} \vc{e}
&=& (\vc{U}_s^{\ast})^{-1}
\vc{U}_s^{\ast} \vc{U}_{s,k}\vc{e}
= \vc{U}_{s,k}\vc{e}.
\end{eqnarray*}
Similarly, from (\ref{limt-Takine-03}), we can also derive (\ref{limt-Takine-02-add}) and thus (\ref{limt-Takine-02}).

Based on (\ref{eqn-pi_l-pi_k*U_{k,l}}) and (\ref{limt-Takine-03}),
Takine~\cite{Taki16} proposes an algorithm that computes
$\{\vc{\pi}_k;k \in \bbZ_N\}$ for a given (sufficiently large) $N$. The outline of the
algorithm is as follows (for details, see \cite[Section 3]{Taki16}): 
\begin{enumerate}
\item Start with choosing $N \in \bbN$ sufficiently large; 
\item compute
\begin{equation}
\vc{x}_{s,N} 
:= m_s^{-1}\vc{e}^{\top}
\diag^{-1}\{\vc{U}_{s,N}\vc{e}\} \cdot \vc{U}_{s,N},
\label{defn-ol{x}_{s,N}}
\end{equation}
for a sufficiently large $s \in \ol{\bbZ}_N$, where $m_s$ is equal to the
cardinality of $\bbL_s$; and 
\item compute
\begin{equation*}
\vc{x}_{s,k}
:= \vc{x}_{s,N} \vc{U}_{N,k},\qquad k = N-1,N-2,\dots,0,
\end{equation*}
and then
\begin{equation}
\vc{x}_{s,k}^{(N)}
:= {
\vc{x}_{s,k} 
\over 
\sum_{\ell=0}^N \vc{x}_{s,\ell}\vc{e}
}
= {
\vc{x}_{s,N} \vc{U}_{N,k}
\over 
\sum_{\ell=0}^N \vc{x}_{s,N} \vc{U}_{N,\ell} \vc{e}
},
\qquad k \in \bbZ_N.
\label{defn-breve{x}_{s,k}}
\end{equation}
\end{enumerate}

This algorithm generates the
probability vector
$\vc{x}_s^{(N)}:=(\vc{x}_{s,0}^{(N)},\vc{x}_{s,1}^{(N)},\dots,\vc{x}_{s,N}^{(N)})$, which
can be considered an approximation to $\vc{\pi}=(\vc{\pi}_0,\vc{\pi}_1,\dots)$. Indeed, applying (\ref{limt-Takine-03}) to (\ref{defn-ol{x}_{s,N}}) yields
\begin{equation}
\lim_{s\to\infty} \vc{x}_{s,N} 
= {\vc{\pi}_N \over \vc{\pi}_N \vc{e} };
\label{lim-ol{x}_{s,N}}
\end{equation}
and combining (\ref{lim-ol{x}_{s,N}}), (\ref{defn-breve{x}_{s,k}}) and (\ref{eqn-pi_l-pi_k*U_{k,l}}) leads to
\begin{eqnarray*}
\lim_{s \to \infty} \vc{x}_{s,k}^{(N)}
&=& {
\vc{\pi}_N \vc{U}_{N,k}
\over 
\sum_{\ell=0}^N \vc{\pi}_N \vc{U}_{N,\ell} \vc{e}
} 
= {
\vc{\pi}_k
\over 
\sum_{\ell=0}^N \vc{\pi}_{\ell} \vc{e}
},
\qquad k \in \bbZ_N,
\end{eqnarray*}
which results in
\begin{equation*}
\lim_{N\to\infty} \lim_{s \to \infty} \vc{x}_{s,k}^{(N)}
= \lim_{N\to\infty} 
{
\vc{\pi}_k
\over 
\sum_{\ell=0}^N \vc{\pi}_{\ell} \vc{e}
} 
= \vc{\pi}_k,\qquad k \in \bbZ_+.
\end{equation*}

Takine's algorithm~\cite{Taki16}, described above, requires the truncation parameter $N \in \bbN$. On the other hand, Algorithm~\ref{algo-upper} does not require such a parameter, though it {\it does require} the conditions of Theorem~\ref{thm-MIP-upper} for its convergence (see Remark~\ref{add-rem-algo-1}). Actually, provided that the truncation parameter $N \in \bbN$ is given, we can establish an alternative algorithm to Algorithm~\ref{algo-upper}, based on the following result.
\begin{coro}[Double-limit-MIP-form solution]\label{coro-mu_{s,k}^{(N)}}
Suppose that the ergodic generator $\vc{Q}$ is in upper block-Hessenberg form
(\ref{defn-Q-MG1-type}). For $N \in \bbN$ and $s \in \ol{\bbZ}_N$, let $\vc{\eta}_s^{(N)} := (\vc{\eta}_{s,0}^{(N)},\vc{\eta}_{s,1}^{(N)},\dots,\vc{\eta}_{s,N}^{(N)})$ denote a probability vector such that
\begin{equation}
\vc{\eta}_{s,k}^{(N)}
=  {
\vc{e}^{\top}  \vc{U}_s^{\ast}\vc{U}_{s,k}
\over 
\vc{e}^{\top}  \sum_{\ell=0}^N \vc{U}_s^{\ast} \vc{U}_{s,\ell} \vc{e}
}, \qquad k \in \bbZ_N,\ s \in \ol{\bbZ}_N. 
\label{defn-mu_{s,k}^{(N)}}
\end{equation}
Furthermore, let $\vc{\pi}^{(N)} := (\vc{\pi}_0^{(N)},\vc{\pi}_1^{(N)},\dots,\vc{\pi}_{N}^{(N_)})$ denote a probability vector such that
\begin{equation}
\vc{\pi}_k^{(N)}
=
{
\vc{\pi}_k
\over 
\sum_{\ell=0}^{N} \vc{\pi}_{\ell} \vc{e}
},\qquad k \in \bbZ_N.
\label{defn-pi_k^{(N)}}
\end{equation}
We then have
\begin{equation}
\lim_{s\to\infty} \| \vc{\eta}_s^{(N)} - \vc{\pi}^{(N)}\| = 0,
\label{convergence-eta_s^{(N)}}
\end{equation}
and thus
\begin{eqnarray}
\vc{\pi}_k
&=&
\lim_{N \to \infty}\lim_{s \to \infty}
{
\vc{e}^{\top}  \vc{U}_s^{\ast}\vc{U}_{s,k}
\over 
\vc{e}^{\top}  \sum_{\ell=0}^N \vc{U}_s^{\ast} \vc{U}_{s,\ell} \vc{e}
},
\qquad k \in \bbZ_+.
\label{MIP-upper-submatrix}
\end{eqnarray}
\end{coro}

\medskip

\begin{proof}
This corollary can be proved in the same way as the proof of (\ref{limt-Takine-02}) in Corollary~\ref{coro-Takine-thm3}. 
\qed
\end{proof}

\begin{rem}\label{rem-coro-algo-upper-02}
Since the $\vc{\eta}_s^{(N)}$'s are probability vectors of finite order $N$, it follows from (\ref{convergence-eta_s^{(N)}}) that $\{\vc{\eta}_s^{(N)}; s \in \ol{\bbZ}_N\}$ is a Cauchy sequence, which leads to
\[
\lim_{s\to\infty}\| \vc{\eta}_s^{(N)} - \vc{\eta}_{s-1}^{(N)} \|=0.
\]
\end{rem}

It follows from (\ref{defn-U_{s,l}}) and (\ref{MIP-upper-submatrix}) that
\begin{eqnarray}
\vc{\pi}_k
&=& \lim_{N \to \infty}\lim_{s \to \infty}
{ 
\vc{e}^{\top} \vc{U}_s^{\ast}
\vc{U}_{s-1} \vc{U}_{s-2} \cdots \vc{U}_k
\over 
\vc{e}^{\top}
\sum_{\ell=0}^N \vc{U}_s^{\ast} \vc{U}_{s-1} \vc{U}_{s-2} \cdots \vc{U}_{\ell} \vc{e}
},
\qquad k \in \bbZ_+.
\label{MIP-upper-submatrix-02}
\end{eqnarray}
We refer to the limit formula (\ref{MIP-upper-submatrix-02}) (and thus (\ref{MIP-upper-submatrix})) as the {\it double-limit MIP-form solution} to distinguish the difference of it from (\ref{pi_l-MIP-upper-case-02}). 
The double-limit MIP-form solution (\ref{MIP-upper-submatrix})
shows that, for all sufficiently large $N \in \bbN$ and $s \in \ol{\bbZ}_N$, the probability vector $\vc{\eta}_s^{(N)}$ is an approximation to $\vc{\pi} = (\vc{\pi}_0,\vc{\pi}_1,\dots)$. 

From this perspective, we consider a way of computing $\vc{\pi}$.
We define $\vc{u}_s^{(N)}$, $s \in \bbZ_+$, as
\begin{equation}
\vc{u}_s^{(N)}
= \sum_{\ell=0}^{\min(N,s)} \vc{U}^{\ast}_{s,\ell}\vc{e}
= \sum_{\ell=0}^{\min(N,s)} \vc{U}_s^{\ast} \vc{U}_{s,\ell}\vc{e},
\qquad s \in \bbZ_+,
\label{defn-u_s^{(N)}}
\end{equation}
where the second equality follows from (\ref{defn-overline{U}_{s,l}}).
Using (\ref{defn-overline{U}_{s,l}}) and (\ref{defn-u_s^{(N)}}), we rewrite (\ref{defn-mu_{s,k}^{(N)}}) as
\begin{equation}
\vc{\eta}_{s,k}^{(N)}
=  {
\vc{e}^{\top} \vc{U}_{s,k}^{\ast}
\over 
\vc{e}^{\top} \vc{u}_s^{(N)}
}, \qquad k \in \bbZ_N,\ s \in \ol{\bbZ}_N.
\label{defn-mu_{s,k}^{(N)}-02}
\end{equation}
Note here that (\ref{defn-overline{u}_s}) and (\ref{defn-u_s^{(N)}}) lead to $\vc{u}_s^{(N)} = \vc{u}_s^{\ast}$ for $s \in \bbZ_N$. Thus, (\ref{eqn-u^*_0}) and (\ref{recursion-u^*_s}) yield
\begin{eqnarray}
\vc{u}_0^{(N)}
&=& \vc{U}_0^{\ast} \vc{e} = (-\vc{Q}_{0,0})^{-1}\vc{e},
\nonumber
\\
\vc{u}_s^{(N)}
&=& \vc{U}_s^{\ast} 
\left( \vc{e} + \vc{Q}_{s,s-1} \vc{u}_{s-1}^{(N)}
\right),
\qquad s=1,2,\dots,N.
\label{recursion-u_s^{(N)}-s<=N}
\end{eqnarray}
In addition, it follows from (\ref{eqn-U_{s-1,l}}) and (\ref{defn-u_s^{(N)}}) with $s > N$ that
\begin{eqnarray}
\vc{u}_s^{(N)}
&=& \vc{U}_s^{\ast} \vc{Q}_{s,s-1} \vc{u}_{s-1}^{(N)},
\qquad s \in \ol{\bbZ}_N =\{N+1,N+2,\dots\}.
\label{recursion-u_s^{(N)}-s>N}
\end{eqnarray}
Combining (\ref{recursion-u_s^{(N)}-s<=N}) and (\ref{recursion-u_s^{(N)}-s>N}) results in
\begin{equation}
\vc{u}_s^{(N)}
=
\left\{
\begin{array}{ll}
\vc{U}_s^{\ast} 
\left( \vc{e} + \vc{Q}_{s,s-1} \vc{u}_{s-1}^{(N)}
\right), & s=1,2,\dots,N,
\\
\vc{U}_s^{\ast} \vc{Q}_{s,s-1} \vc{u}_{s-1}^{(N)},& s = N+1,N+2,\dots.
\end{array}
\right.
\label{recursion-u_s^{(N)}}
\end{equation}
Consequently, we can compute the approximation $\vc{\eta}_s^{(N)}$ to $\vc{\pi}$ by the following algorithm.

{
\renewcommand{\baselinestretch}{0.8}\selectfont
\begin{algorithm}[H]
\caption{Upper block-Hessenberg Markov chain}\label{algo-upper-02}
{\bf Input}: $\vc{Q}$, $\varepsilon \in (0,1)$, $\{\vc{\alpha}_0,\vc{\alpha}_1,\dots,\vc{\alpha}_N\}$ and the truncation parameter $N \in \bbN$.
\\ 
{\bf Output}: 
$\vc{\eta}_s^{(N)} = (\vc{\eta}_{s,0}^{(N)},\vc{\eta}_{s,1}^{(N)},\dots,\vc{\eta}_{s,N}^{(N)})$,
where $s \in \bbZ_+$ is fixed when the iteration stops.
%
\smallskip
\begin{enumerate}
\item Set $s = 0$.
\item Compute $\vc{U}_0^{\ast} = \vc{U}_{0,0}^{\ast} =
  (-\vc{Q}_{0,0})^{-1}$ and $\vc{u}_0^{(N)} = \vc{U}_0^{\ast}\vc{e}$.
\item Iterate the following:
\begin{enumerate}
\item Increment $s$ by one.
\item Compute $\vc{U}_s^{\ast}=\vc{U}_{s,s}^{\ast}$ by (\ref{eqn-U_s^*}).
\item Compute $\{\vc{U}_{s,k}^{\ast};k \in \bbZ_{s-1}\}$ by
  (\ref{eqn-U_{s-1,l}}) and $\vc{u}_s^{(N)}$ by (\ref{recursion-u_s^{(N)}}).
\item If $s \ge N+1$, compute $\{\vc{\eta}_{s,k}^{(N)};k\in\bbZ_N\}$ by (\ref{defn-mu_{s,k}^{(N)}-02}).
\item If $s \ge N+2$ and $\| \vc{\eta}_s^{(N)} - \vc{\eta}_{s-1}^{(N)} \| < \varepsilon$, then stop the iteration; otherwise return to step~(a).
\end{enumerate}
\end{enumerate}
\end{algorithm}
\renewcommand{\baselinestretch}{1.1}\selectfont
}

\medskip

Algorithm~\ref{algo-upper-02} always stops after a finite number of iterations (see Remark~\ref{rem-coro-algo-upper-02}), though it requires the truncation parameter $N \in \bbN$. In fact, the parameter $N$ can be determined by using the $\vc{f}$-modulated drift condition (for details, see \cite{Kont16} and \cite[Section 14.2.1]{Meyn09}).
\begin{cond}[$\vc{f}$-modulated drift condition]\label{cond-f-modulated-drift}
There exist some $b \in (0,\infty)$, column vectors $\vc{v}:=(v(i))_{i\in\bbZ_+} \ge \vc{0}$ and
$\vc{f}:=(f(i))_{i\in\bbZ_+} \ge \vc{e}$ and finite set $\bbC \subset \bbZ_+$ such that
\begin{equation}
\vc{Q}\vc{v} \le  - \vc{f} + b \vc{1}_{\bbC}.
\label{ineqn-QV<=-f+b1_C}
\end{equation}
\end{cond}

It is implied in \cite[Theorem~1.1]{Kont16} that Condition~\ref{cond-f-modulated-drift} holds if and only if $\vc{Q}$ is ergodic, provided $\vc{Q}$ is irreducible. It also follows from (\ref{ineqn-QV<=-f+b1_C}) that $\vc{\pi}\vc{f} \le b$ and thus
\begin{equation}
\pi(i) \le {b \over f(i)},\qquad i \in \bbZ_+.
\label{ineqn-pi}
\end{equation}
We can use this inequality to estimate the tail of the stationary distribution vector $\vc{\pi}$.

We assume that
\[
\sum_{i=0}^{\infty} {1 \over f(i)} 
= \sum_{k=0}^{\infty} \sum_{i\in\bbL_k}
{1 \over f(i)}< \infty,
\]
where the second equality is due to (\ref{defn-L_s}).
Thus, for any $\varepsilon > 0$, there exists some $N_{\varepsilon} \in \bbN$ such that
\begin{equation}
\sum_{k=N_{\varepsilon}+1}^{\infty} \sum_{i\in\bbL_k}
{b \over f(i)} \le {\varepsilon \over 2}.
\label{defn-N_{epsilon}}
\end{equation}
Combining (\ref{ineqn-pi}) and (\ref{defn-N_{epsilon}}) yields
\begin{equation}
\sum_{k=N_{\varepsilon}+1}^{\infty} \vc{\pi}_k\vc{e}
= \sum_{k=N_{\varepsilon}+1}^{\infty} \sum_{i\in\bbL_k}
\pi(i)
\le {\varepsilon \over 2}.
\label{bound-sum-pi}
\end{equation}
We also define $\vc{\pi}^{(N_{\varepsilon})} := (\vc{\pi}_0^{(N_{\varepsilon})},\vc{\pi}_1^{(N_{\varepsilon})},\dots,\vc{\pi}_{N_{\varepsilon}}^{(N_{\varepsilon})})$ as a probability vector such that 
\begin{equation*}
\vc{\pi}_k^{(N_{\varepsilon})}
=
{
\vc{\pi}_k
\over 
\sum_{\ell=0}^{N_{\varepsilon}} \vc{\pi}_{\ell} \vc{e}
},\qquad k \in \bbZ_{N_{\varepsilon}}.
\end{equation*}
Using (\ref{bound-sum-pi}), we have
\begin{eqnarray}
\| \vc{\pi}^{(N_{\varepsilon})} - \vc{\pi} \|
&=& \sum_{k=0}^{N_{\varepsilon}} 
(\vc{\pi}_k^{(N_{\varepsilon})} - \vc{\pi}_k)\vc{e}
+ \sum_{k=N_{\varepsilon}+1}^{\infty} \vc{\pi}_k \vc{e}
\nonumber
\\
&=& \sum_{k=0}^{N_{\varepsilon}} 
\left({ \vc{\pi}_k \over  \sum_{\ell=0}^{N_{\varepsilon}} \vc{\pi}_{\ell} \vc{e} } 
- \vc{\pi}_k \right)\vc{e}
+ \sum_{k=N_{\varepsilon}+1}^{\infty} \vc{\pi}_k \vc{e}
\nonumber
\\
&=& 1 - \sum_{k=0}^{N_{\varepsilon}}\vc{\pi}_k \vc{e}
+ \sum_{k=N_{\varepsilon}+1}^{\infty} \vc{\pi}_k \vc{e}
\nonumber
\\
&=& 2\sum_{k=N_{\varepsilon}+1}^{\infty} \vc{\pi}_k \vc{e}
\le \varepsilon,
\label{error-bound-pi^{(N)}}
\end{eqnarray}
which shows that $\vc{\pi}$ is approximated by $\vc{\pi}^{(N_{\varepsilon})}$ within error $\varepsilon$ measured in terms of the total variation distance. Furthermore, from (\ref{error-bound-pi^{(N)}}) and (\ref{convergence-eta_s^{(N)}}), we have
\begin{eqnarray*}
\| \vc{\eta}_s^{(N_{\varepsilon})} - \vc{\pi} \|
&\le& \| \vc{\eta}_s^{(N_{\varepsilon})} - \vc{\pi}^{(N_{\varepsilon})} \| 
+
\| \vc{\pi}^{(N_{\varepsilon})} - \vc{\pi} \|
\nonumber
\\
&\le& \| \vc{\eta}_s^{(N_{\varepsilon})} - \vc{\pi}^{(N_{\varepsilon})} \|  + \varepsilon \to \varepsilon\quad \mbox{as $s \to \infty$}.
\end{eqnarray*}
Therefore, for a sufficiently large $s \in \ol{\bbZ}_N$, the total variation error of the approximation $\vc{\eta}_s^{(N_{\varepsilon})}$ for $\vc{\pi}$ is less than the tolerance $\varepsilon$. Unfortunately, it is in general difficult to determine such a value of $s$ in advance. 

We now go back to Algorithm~\ref{algo-upper}. Although Algorithm~\ref{algo-upper} requires the convergence conditions (see Remark~\ref{add-rem-algo-1}), it is free from the problem of determining parameter $N$. Algorithm~\ref{algo-upper} also has another remarkable feature. Recall here that
Algorithm~\ref{algo-upper} generates a sequence of the
linear-augmented truncation approximations
$\presub{(s)}\wh{\vc{\pi}}$'s. Therefore, we can obtain an upper bound
for $\| \presub{(s)}\wh{\vc{\pi}} - \vc{\pi} \|$, following the studies
\cite{LiuYuan10,LiuYuan15,Masu17-LAA,Masu15-ADV,Masu16-SIAM,Masu17-JORSJ,Twee98} on the error estimation of the truncation approximation of Markov
chains (therein the $\vc{f}$-modulated drift condition plays an important role). Using such an upper bound, we can establish
sophisticated stopping criteria for Algorithm~\ref{algo-upper}, which
guarantee the accuracy of the resulting approximation to
$\vc{\pi}$. The details of this topic are beyond the scope of this
paper and thus are omitted here.

\subsection{Lower block-Hessenberg Markov chain}\label{subsec-lower-case}

We assume that the ergodic generator $\vc{Q}$ is in lower
block-Hessenberg form  (i.e., is of level-dependent GI/M/1-type):
\begin{equation}
\vc{Q} = 
\bordermatrix{
        & \bbL_0 & \bbL_1    & \bbL_2    & \bbL_3   & \cdots
\cr
\bbL_0 		& 
\vc{Q}_{0,0}  	& 
\vc{Q}_{0,1} 	& 
\vc{O}  		& 
\vc{O}  		& 
\cdots
\cr
\bbL_1 		&
\vc{Q}_{1,0}  	& 
\vc{Q}_{1,1} 	& 
\vc{Q}_{1,2}  	& 
\vc{O}  		& 
\cdots
\cr
\bbL_2 		& 
\vc{Q}_{2,0}  	& 
\vc{Q}_{2,1} 	& 
\vc{Q}_{2,2}  	& 
\vc{Q}_{2,3} 	&  
\cdots
\cr
\bbL_3 & 
\vc{Q}_{3,0}  	& 
\vc{Q}_{3,1} 	& 
\vc{Q}_{3,2}  	& 
\vc{Q}_{3,3} 	&  
\cdots
\cr
~\vdots  	& 
\vdots     		& 
\vdots     		&  
\vdots    		& 
\vdots    		& 
\ddots
},
\label{defn-Q-lower}
\end{equation}
where $\vc{Q}_{k,\ell} = \vc{O}$ for $k\in\bbZ_+$ and
$\ell\in\bbZ_{k+1}$. We then have
\begin{equation*}
\presub{(s)}\vc{Q}
=
\left(
\begin{array}{ccccccc}
\vc{Q}_{0,0} 	&
\vc{Q}_{0,1} 	&
\vc{O} 			&
\cdots			&
\vc{O} 			&
\vc{O} 			&
\vc{O} 			
\\
\vc{Q}_{1,0} 	&
\vc{Q}_{1,1} 	&
\vc{Q}_{1,2} 	&
\cdots			&
\vc{O} 			&
\vc{O} 			&
\vc{O} 			
\\
\vc{Q}_{2,0} 	&
\vc{Q}_{2,1} 	&
\vc{Q}_{2,2} 	&
\cdots			&
\vc{O} 			&
\vc{O} 			&
\vc{O} 	
\\
\vdots			&
\vdots			&
\vdots			&
\ddots			&
\vdots			&
\vdots			&
\vdots			
\\
\vc{Q}_{s-2,0} 	&
\vc{Q}_{s-2,1} 	&
\vc{Q}_{s-2,2} 	&
\cdots			&
\vc{Q}_{s-2,s-2} 	&
\vc{Q}_{s-2,s-1} 	&
\vc{O}
\\
\vc{Q}_{s-1,0} 	&
\vc{Q}_{s-1,1} 	&
\vc{Q}_{s-1,2} 	&
\cdots			&
\vc{Q}_{s-1,s-2} 	&
\vc{Q}_{s-1,s-1} 	&
\vc{Q}_{s-1,s} 
\\
\vc{Q}_{s,0} 	&
\vc{Q}_{s,1} 	&
\vc{Q}_{s,2} 	&
\cdots			&
\vc{Q}_{s,s-2} 	&
\vc{Q}_{s,s-1} 	&
\vc{Q}_{s,s} 
\\
\end{array}
\right).
\end{equation*}

We permutate the columns and rows of $\presub{(s)}\vc{Q}$ by
arranging the subsets $\{\bbL_k;k=0,1,\dots,s\}$ of the state space
$\bbZ_+$ in the descending order
$(\bbL_s,\bbL_{s-1},\dots,\bbL_0)$. We denote the resulting matrix by
$\presub{(s)}\wt{\vc{Q}}$. Clearly, $\presub{(s)}\wt{\vc{Q}}$ is in
the same form as $\presub{(s)}\vc{Q}$ in
(\ref{partition-(n_s)Q-upper}), i.e., in the upper block-Hessenberg
form:
\begin{equation}
\presub{(s)}\wt{\vc{Q}}
=
\left(
\begin{array}{ccccccc}
\vc{Q}_{s,s} 	&
\vc{Q}_{s,s-1} 	&
\vc{Q}_{s,s-2} 	&
\cdots			&
\vc{Q}_{s,2} 	&
\vc{Q}_{s,1} 	&
\vc{Q}_{s,0} 	
\\
\vc{Q}_{s-1,s} 	&
\vc{Q}_{s-1,s-1} 	&
\vc{Q}_{s-1,s-2} 	&
\cdots			&
\vc{Q}_{s-1,2} 	&
\vc{Q}_{s-1,1} 	&
\vc{Q}_{s-1,0} 	
\\
\vc{O} 			&
\vc{Q}_{s-2,s-1} 	&
\vc{Q}_{s-2,s-2} 	&
\cdots			&
\vc{Q}_{s-2,2} 	&
\vc{Q}_{s-2,1} 	&
\vc{Q}_{s-2,0} 	
\\
\vdots			&
\vdots			&
\vdots			&
\ddots			&
\vdots			&
\vdots			&
\vdots			
\\
\vc{O}			&
\vc{O}			&
\vc{O}			&
\cdots			&
\vc{Q}_{1,2} 	&
\vc{Q}_{1,1} 	&
\vc{Q}_{1,0} 
\\
\vc{O}			&
\vc{O}			&
\vc{O}			&
\cdots			&
\vc{O}			&
\vc{Q}_{0,1} 	&
\vc{Q}_{0,0} 
\\
\end{array}
\right).
\label{partition-(n_s)wt{Q}-upper}
\end{equation}
We also partition $(-\presub{(s)}\wt{\vc{Q}})^{-1}$ as
\begin{equation}
(-\presub{(s)}\wt{\vc{Q}})^{-1} 
= 
\bordermatrix{
        & 
\bbL_s & 
\bbL_{s-1} & 
\cdots  & 
\bbL_0 
\cr
\bbL_s						& 
\presub{(s)}\vc{Y}_{0,0}  	& 
\presub{(s)}\vc{Y}_{0,1}  	& 
\cdots						&
\presub{(s)}\vc{Y}_{0,s}  	 
\cr
\bbL_{s-1}					& 
\presub{(s)}\vc{Y}_{1,0}  	& 
\presub{(s)}\vc{Y}_{1,1}  	& 
\cdots						&
\presub{(s)}\vc{Y}_{1,s}  	 
\cr
~\vdots						&
\vdots						&
\vdots						&
\ddots						&
\vdots						
\cr
\bbL_0						& 
\presub{(s)}\vc{Y}_{s,0}  	& 
\presub{(s)}\vc{Y}_{s,1}  	& 
\cdots						&
\presub{(s)}\vc{Y}_{s,s}  	 
}.
\label{eqn-170306-01}
\end{equation}
It also follows from (\ref{eqn-(s)ol{pi}_s}), (\ref{eqn-170306-01}) and the definition of $\presub{(s)}\wt{\vc{Q}}$ that
\begin{eqnarray}
(\presub{(s)}\wc{\vc{\pi}}_{s},\presub{(s)}\wc{\vc{\pi}}_{s-1},\dots,
\presub{(s)}\wc{\vc{\pi}}_{0})
&=& { (\vc{0},\dots,\vc{0},\vc{\alpha}_0) (- \presub{(s)}\wt{\vc{Q}})^{-1} 
\over 
(\vc{0},\dots,\vc{0}, \vc{\alpha}_0)(- \presub{(s)}\wt{\vc{Q}})^{-1} \vc{e}
}
\nonumber
\\
&=& { \vc{\alpha}_0 (\presub{(s)}\vc{Y}_{s,0},\presub{(s)}\vc{Y}_{s,1},\dots,\presub{(s)}\vc{Y}_{s,s}) 
\over 
\vc{\alpha}_0 \sum_{\ell=0}^s \presub{(s)}\vc{Y}_{s,\ell} \vc{e}
}, \qquad s \in\bbZ_+.
\label{eqn-(s)ol{pi}_0_reverse}
\end{eqnarray}

We now define
$\{\presub{(s)}\wt{\vc{U}}_k^{\ast};k\in\bbZ_s\}$,$s \in \bbZ_+$, recursively  as follows: 
\begin{equation}
\presub{(s)}\wt{\vc{U}}_k^{\ast}
= 
\left(
-\vc{Q}_{s-k,s-k} 
- \dm\sum_{\ell=0}^{k-1} \presub{(s)}\wt{\vc{U}}_{k,\ell} \vc{Q}_{s-\ell,s-k}
\right)^{-1},
\qquad k \in \bbZ_s,
\label{defn-(n_s)wt{U}_k^*}
\end{equation}
where $\presub{(s)}\wt{\vc{U}}_{k,\ell}$'s, $k=1,2,\dots,s$, $\ell \in
\bbZ_{k-1}$, are given by
\begin{eqnarray}
\presub{(s)}\wt{\vc{U}}_{k,\ell}
&=&
(\vc{Q}_{s-k,s-k+1}   \presub{(s)}\wt{\vc{U}}_{k-1}^{\ast}) 
\nonumber
\\
&& {}
\times 
(\vc{Q}_{s-k+1,s-k+2} \presub{(s)}\wt{\vc{U}}_{k-2}^{\ast})  \cdots 
(\vc{Q}_{s-\ell-1,s-\ell}\presub{(s)}\wt{\vc{U}}_{\ell}^{\ast}).
\label{defn-(n_s)wt{U}_{k,l}}
\end{eqnarray}
Note here that $\presub{(s)}\wt{\vc{U}}_k^{\ast}$ and
$\presub{(s)}\wt{\vc{U}}_{k,\ell}$ are obtained by replacing, with
$\vc{Q}_{s-k,s-\ell}$, $\vc{Q}_{k,\ell}$ in (\ref{defn-U_k^*}) and
(\ref{defn-U_{k,l}}), respectively. Thus, Lemma~\ref{lem-(n_s)X_{s,l}}
implies that
\begin{equation}
\presub{(s)}\vc{Y}_{s,\ell} 
= \presub{(s)}\wt{\vc{U}}_s^{\ast} \presub{(s)}\wt{\vc{U}}_{s,\ell},
\qquad s \in \bbZ_+,\ \ell \in \bbZ_s,
\label{eqn-(n_s)Y(0,l)}
\end{equation}
where $\presub{(s)}\wt{\vc{U}}_{\ell,\ell} = \vc{I}$ for $\ell \in
\bbZ_s$. Substituting (\ref{eqn-(n_s)Y(0,l)}) into (\ref{eqn-(s)ol{pi}_0_reverse}), we have
\begin{eqnarray}
\presub{(s)}\wc{\vc{\pi}}_{k}
&=&
{
\vc{\alpha}_0 \presub{(s)}\wt{\vc{U}}_s^{\ast}\, \presub{(s)}\wt{\vc{U}}_{s,s-k}
\over 
\vc{\alpha}_0 
\sum_{\ell=0}^s \presub{(s)}\wt{\vc{U}}_s^{\ast} \presub{(s)}\wt{\vc{U}}_{s,\ell} \vc{e}
}, \qquad k \in \bbZ_s,
\label{eqn-(n_s)pi_{0,l}-03}
\end{eqnarray}
which is the counterpart of the expression of $\presub{(s)}\wh{\vc{\pi}}_{k}$ presented in Lemma~\ref{lem-producet-form-(s)ol{pi}_s}.

Let
\begin{align}
\presub{(s)}\vc{R}_k^{\ast}
&= \presub{(s)}\vc{U}_{s-k}^{\ast},& s &\in \bbZ_+,\ k \in \bbZ_s,
\label{eqn-(s)R_k^*-02}
\\
\presub{(s)}\vc{R}_{k,\ell}
&= \presub{(s)}\vc{U}_{s-k,s-\ell},& s &\in \bbZ_+,\ k \in \bbZ_s,\ \
\ell = k,k+1,\dots,s.
\label{eqn-(s)R_{k,l}-02}
\end{align}
where $\presub{(s)}\vc{R}_{\ell,\ell} = \vc{I}$ for $\ell
\in \bbZ_s$.
Using (\ref{eqn-(s)R_k^*-02}) and (\ref{eqn-(s)R_{k,l}-02}), we rewrite (\ref{eqn-(n_s)pi_{0,l}-03}) as
\begin{eqnarray}
\presub{(s)}\wc{\vc{\pi}}_{k}
&=&
{
\vc{\alpha}_0 \presub{(s)}\vc{R}_0^{\ast}\, \presub{(s)}\vc{R}_{0,k}
\over 
\vc{\alpha}_0 
\sum_{\ell=0}^s \presub{(s)}\vc{R}_0^{\ast} \presub{(s)}\vc{R}_{0,\ell} \vc{e}
}, \qquad k \in \bbZ_s.
\label{eqn-(n_s)pi_{0,l}}
\end{eqnarray}
Furthermore, using (\ref{defn-(n_s)wt{U}_k^*}), (\ref{defn-(n_s)wt{U}_{k,l}}), (\ref{eqn-(s)R_k^*-02}) and (\ref{eqn-(s)R_{k,l}-02}), we establish the recursion of $\{\presub{(s)}\vc{R}_k^{\ast};s\in\bbZ_+, k\in\bbZ_s\}$: For
$k=s,s-1,\dots,0$,
\begin{eqnarray}
\presub{(s)}\vc{R}_k^{\ast}
&=& 
\left(
- \vc{Q}_{k,k} 
- \dm\sum_{\ell=k+1}^s \presub{(s)}\vc{R}_{k,\ell} \vc{Q}_{\ell,k}
\right)^{-1},
\label{defn-(s)R_k^*}
\end{eqnarray}
where $\presub{(s)}\vc{R}_{k,\ell}$'s, $k \in \bbZ_{s-1}$,
$\ell=k+1,k+2,\dots,s$, are given by
\begin{equation}
\presub{(s)}\vc{R}_{k,\ell}
= 
(\vc{Q}_{k,k+1} \presub{(s)}\vc{R}_{k+1}^{\ast})
(\vc{Q}_{k+1,k+2} \presub{(s)}\vc{R}_{k+2}^{\ast})
\cdots
(\vc{Q}_{\ell-1,\ell} \presub{(s)}\vc{R}_{\ell}^{\ast}).
\label{defn-(s)R_{k,l}}
\end{equation}

We now define $\presub{(s)}\vc{R}_k$, $s\in\bbN$, $k\in\bbZ_s$,
as
\begin{equation}
\presub{(s)}\vc{R}_k
= \vc{Q}_{k-1,k} \presub{(s)}\vc{R}_k^{\ast},
\qquad k \in \bbZ_s.
\label{defn-(s)R_k}
\end{equation}
Substituting (\ref{defn-(s)R_k}) into
(\ref{defn-(s)R_{k,l}}), we have, for $s \in \bbN$ and $k \in
\bbZ_{s-1}$,
\begin{eqnarray}
\presub{(s)}\vc{R}_{k,\ell}
= \presub{(s)}\vc{R}_{k+1} \presub{(s)}\vc{R}_{k+2}
\cdots
\presub{(s)}\vc{R}_{\ell}, 
\qquad \ell=k+1,k+2,\dots,s.
\label{defn-(s)R_{k,l}-02}
\end{eqnarray}
Using (\ref{defn-(s)R_{k,l}-02}), we rewrite (\ref{eqn-(n_s)pi_{0,l}})
as
\begin{eqnarray}
\presub{(s)}\wc{\vc{\pi}}_{k}
=
{
\vc{\alpha}_0 \presub{(s)}\vc{R}_0^{\ast}
\presub{(s)}\vc{R}_1 \presub{(s)}\vc{R}_2 \cdots \presub{(s)}\vc{R}_k
\over 
\vc{\alpha}_0 \sum_{\ell=0}^s \presub{(s)}\vc{R}_0^{\ast}
 \presub{(s)}\vc{R}_1 \presub{(s)}\vc{R}_2 \cdots 
\presub{(s)}\vc{R}_{\ell} \vc{e}
},
\qquad k \in \bbZ_+.  
\label{eqn-(n_s)pi_{0,l}-02}
\end{eqnarray}
Furthermore, it follows from Theorem~\ref{thm-limit-row-[n]F} and Definition~\ref{defn-simple} that if
Assumption~\ref{assumpt-1} holds then
\[
\lim_{s\to\infty} \|\presub{(s)}\wc{\vc{\pi}} - \vc{\pi}\| = 0.
\] 
Combining this with (\ref{eqn-(n_s)pi_{0,l}}) and
(\ref{eqn-(n_s)pi_{0,l}-02}) yields the (single-limit) MIP-form solution of
$\vc{\pi}=(\vc{\pi}_0,\vc{\pi}_1,\dots)$ in the lower block-Hessenberg
case, which is summarized in the following theorem.
\begin{thm}[MIP-form solution]\label{thm-MIP-lower}
Suppose that the ergodic generator $\vc{Q}$ is in lower
block-Hessenberg form (\ref{defn-Q-lower}). If
Assumption~\ref{assumpt-1} holds, then
\begin{eqnarray}
\vc{\pi}_k
&=& 
\lim_{s\to\infty}
{
\vc{\alpha}_0 \presub{(s)}\vc{R}_0^{\ast}\, \presub{(s)}\vc{R}_{0,k}
\over 
\vc{\alpha}_0 \sum_{\ell=0}^s \presub{(s)}\vc{R}_0^{\ast} \presub{(s)}\vc{R}_{0,\ell} \vc{e}
},
\qquad k \in \bbZ_+,
\label{pi_l-MIP-lower-case}
\end{eqnarray}
or equivalently,
\begin{eqnarray*}
\vc{\pi}_k
&=& \lim_{s\to\infty}
{
\vc{\alpha}_0 \presub{(s)}\vc{R}_0^{\ast}
\presub{(s)}\vc{R}_1 \presub{(s)}\vc{R}_2 \cdots \presub{(s)}\vc{R}_k
\over 
\vc{\alpha}_0 \sum_{\ell=0}^s \presub{(s)}\vc{R}_0^{\ast}
 \presub{(s)}\vc{R}_1 \presub{(s)}\vc{R}_2 \cdots 
\presub{(s)}\vc{R}_{\ell} \vc{e}
},\qquad k \in \bbZ_+.
\end{eqnarray*}
\end{thm}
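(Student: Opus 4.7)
The plan is to assemble Theorem~\ref{thm-MIP-lower} directly from the machinery that is already in place: Corollary~\ref{coro-(n)pi_{nu}}, Lemma~\ref{lem-(n_s)Y_{0,l}}, and the row-wise limit formula Lemma~\ref{lem-limit-(n)F}. First, I would apply Corollary~\ref{coro-(n)pi_{nu}} with $\nu = 0$ to write the $0$-th-column-augmented truncation approximation as
\[
\presub{n_s}\vc{\pi}_{\{0\}}
= { \presub{n_s}\vc{e}_{\{0\}}^{\top}(- \presub{n_s}\vc{Q})^{-1}
\over \presub{n_s}\vc{e}_{\{0\}}^{\top}(- \presub{n_s}\vc{Q})^{-1}\vc{e} }.
\]
Because $\presub{n_s}\vc{e}_{\{0\}}^{\top}$ selects the first row of the NW-corner block of $(-\presub{n_s}\vc{Q})^{-1}$, using the block partition (\ref{defn-(-Q)^{-1}-MG1-type}), the $k$-th block of this vector equals
\[
\presub{n_s}\vc{\pi}_{\{0\},k}
= {\vc{e}_{\rm F}^{\top} \presub{n_s}\vc{X}_{0,k}
\over \vc{e}_{\rm F}^{\top} \sum_{\ell=0}^{s} \presub{n_s}\vc{X}_{0,\ell}\vc{e}},
\qquad k \in \bbZ_s.
\]

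Next, I would substitute the matrix-product expression $\presub{n_s}\vc{X}_{0,\ell} = \presub{s}\vc{R}_0^{\ast}\presub{s}\vc{R}_{0,\ell}$ supplied by Lemma~\ref{lem-(n_s)Y_{0,l}}; this immediately reproduces (\ref{eqn-(n_s)pi_{0,l}}). To get the equivalent formulation announced in the theorem, I would unfold $\presub{s}\vc{R}_{0,\ell}$ via (\ref{defn-(s)R_{k,l}}) and (\ref{defn-(s)R_k}) into the telescoping product $\presub{s}\vc{R}_1 \presub{s}\vc{R}_2 \cdots \presub{s}\vc{R}_{\ell}$, so that both representations coincide.

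The only genuine analytic step is justifying the passage to the limit $s\to\infty$. For that I would invoke Lemma~\ref{lem-limit-(n)F}, which under Assumption~\ref{assumpt-f-drift} gives $\lim_{n\to\infty}\|\presub{n}\vc{e}_{\{\nu\}}^{\top}\presub{n}\vc{F} - \vc{\pi}\| = 0$ for any fixed $\nu \in \bbZ_+$; specializing to $\nu = 0$ and to the truncation indices $n = n_s$ yields $\lim_{s\to\infty}\|\presub{n_s}\vc{\pi}_{\{0\}} - \vc{\pi}\| = 0$. Taking the $k$-th block of this convergence, together with the explicit formula for $\presub{n_s}\vc{\pi}_{\{0\},k}$ derived above, delivers (\ref{pi_l-MIP-lower-case}).

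The main obstacle I anticipate is essentially bookkeeping rather than analysis: the row-wise convergence in total variation transfers to block-wise convergence trivially because each $\bbL_k$ is finite, but one should pause to note that the denominator $\vc{e}_{\rm F}^{\top}\sum_{\ell=0}^{s}\presub{s}\vc{R}_0^{\ast}\presub{s}\vc{R}_{0,\ell}\vc{e}$ is strictly positive for all sufficiently large $s$ (guaranteed because $\presub{n_s}\vc{\pi}_{\{0\}}$ is a probability vector and, in the limit, approaches $\vc{\pi}$ which has $\vc{\pi}_k\vc{e} > 0$ for each fixed $k$); this ensures the ratio in (\ref{pi_l-MIP-lower-case}) is well-defined along the tail of $s$. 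No other delicate points arise: Lemma~\ref{lem-limit-(n)F} has already absorbed the hard work, exactly as Theorem~\ref{thm-MIP-upper} was built on Theorem~\ref{thm-(n)beta*(n)F} in the upper block-Hessenberg case.
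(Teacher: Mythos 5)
Your proposal is correct and follows essentially the same route as the paper: the paper likewise obtains (\ref{eqn-(n_s)pi_{0,l}}) from Corollary~\ref{coro-(n)pi_{nu}} with $\nu=0$, the block partition (\ref{defn-(-Q)^{-1}-MG1-type}) and Lemma~\ref{lem-(n_s)Y_{0,l}}, rewrites it via (\ref{defn-(s)R_k}) and (\ref{defn-(s)R_{k,l}-02}), and then passes to the limit using Lemma~\ref{lem-limit-(n)F}. Your added remark on the positivity of the denominator is a harmless refinement the paper leaves implicit.
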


\medskip

Using the MIP-form solution (\ref{pi_l-MIP-lower-case}), we
establish an algorithm that generates the sequence
$\{\presub{(s)}\wc{\vc{\pi}};s\in\bbZ_+\}$ convergent to $\vc{\pi}$ in
the lower block-Hessenberg case. The MIP-form solution (\ref{pi_l-MIP-lower-case}) consists of $\presub{(s)}\vc{R}_0^{\ast}$ and $\{\presub{(s)}\vc{R}_{0,k}\;k=1,2,\dots,s\}$. In order to obtain $\presub{(s)}\vc{R}_0^{\ast}$, we compute $\presub{(s)}\vc{R}_s^{\ast},\presub{(s)}\vc{R}_{s-1}^{\ast},  \dots,\presub{(s)}\vc{R}_1^{\ast}$ by (\ref{defn-(s)R_k^*}) and (\ref{defn-(s)R_{k,l}}), where $\presub{(s)}\vc{R}_s^{\ast} = (-\vc{Q}_{s,s})^{-1}$. Given the $\presub{(s)}\vc{R}_k^{\ast}$'s, we can compute $\{\presub{(s)}\vc{R}_{0,k};k=1,2,\dots,s\}$ by the recursion:
\begin{eqnarray}
\presub{(s)}\vc{R}_{0,k}
= \left\{
\begin{array}{ll}
\vc{I},&\quad k=0,
\\
\presub{(s)}\vc{R}_{0,k-1}\vc{Q}_{k-1,k}\, \presub{(s)}\vc{R}_k^{\ast},&\quad k=1,2,\dots,s,
\end{array}
\right.
\label{recursion-(s)R_{0,k}}
\end{eqnarray}
which follows from (\ref{defn-(s)R_{k,l}}).
It should be noted that, for different values of $s$, we have to independently
compute the component matrices $\presub{(s)}\vc{R}_0^{\ast}$ and
$\presub{(s)}\vc{R}_{0,k}$'s of the MIP-form solution
(\ref{pi_l-MIP-lower-case}). This fact implies that the algorithm
in the lower block-Hessenberg case (Algorithm~\ref{algo-lower} below)
is less effective than Algorithm~\ref{algo-upper} in the upper
block-Hessenberg case.

{
\renewcommand{\baselinestretch}{0.8}\selectfont
\begin{algorithm}[H]
\caption{Lower block-Hessenberg Markov chain}\label{algo-lower}
{\bf Input}: $\vc{Q}$, $\varepsilon \in (0,1)$, $\vc{\alpha}_0$.
\\ 
{\bf Output}: 
$\presub{(s)}\wc{\vc{\pi}} =
(\presub{(s)}\wc{\vc{\pi}}_{0},\presub{(s)}\wc{\vc{\pi}}_{1},\dots,\presub{(s)}\wc{\vc{\pi}}_{s})$, where $s \in \bbZ_+$
is fixed when the iteration stops.
%
\smallskip
\begin{enumerate}
\item Set $s = 0$.
\item Compute $\presub{(0)}\vc{R}_0^{\ast} = (-\vc{Q}_{0,0})^{-1}$.
\item Compute $\presub{(0)}\ol{\vc{\pi}}_0 = \vc{\alpha}_0\presub{(0)}\vc{R}_0^{\ast}/(\vc{\alpha}_0\presub{(0)}\vc{R}_0^{\ast}\vc{e})$.
\item Iterate the following:
\begin{enumerate}
\item Increment $s$ by one.
\item For $k=s,s-1,\dots,0$, compute $\presub{(s)}\vc{R}_k^{\ast}$ by (\ref{defn-(s)R_k^*}) and (\ref{defn-(s)R_{k,l}}). 
\item For $k=1,2,\dots,s$, compute $\presub{(s)}\vc{R}_{0,k}$ by (\ref{recursion-(s)R_{0,k}}).
\item Compute $\{\presub{(s)}\wc{\vc{\pi}}_{k};k\in\bbZ_s\}$ by (\ref{eqn-(n_s)pi_{0,l}}).
\item If $\| \presub{(s)}\wc{\vc{\pi}} - \presub{(s-1)}\wc{\vc{\pi}} \| < \varepsilon$, 
then stop the iteration;
otherwise return to step~(a).
\end{enumerate}
\end{enumerate}
\end{algorithm}
\renewcommand{\baselinestretch}{1.1}\selectfont
}

\begin{rem}\label{rem-lower-case}
Corollary~\ref{coro-limit-[n]ol{pi}} guarantees that Algorithm~\ref{algo-lower}
stops after a finite number of iterations if
Assumption~\ref{assumpt-1} holds.
\end{rem}

\begin{rem}
Algorithm~\ref{algo-lower} increments the iteration index $s$ one by
one and thus generates the probability vector $\presub{(s)}\wc{\vc{\pi}}$
of the smallest order that satisfies the stopping criterion $\|
\presub{(s)}\wc{\vc{\pi}} - \presub{(s-1)}\wc{\vc{\pi}} \| < \varepsilon$.
Unlike Algorithm~\ref{algo-upper}, however, the iterations of
Algorithm~\ref{algo-lower} are performed independently one
another. More specifically, for each $s \in \bbZ_+$,
Algorithm~\ref{algo-lower} computes $\{\presub{(s)}\vc{R}_k^{\ast};k \in
\bbZ_s\}$ from scratch and thus $s+1$ inverse matrices.  Therefore,
until the iteration index $s$ reaches $N \in \bbN$,
Algorithm~\ref{algo-lower} computes $(N+1)(N+2)/2$ inverse matrices
whereas Algorithm~\ref{algo-upper} computes $N+1$ inverse matrices
(see Remark~\ref{rem-algo-1}). To reduce the computational cost and
accelerate the convergence of the resulting probability vectors, we
can increment the iteration index $s$ in such a way that $s =
s_0,s_1,\dots$, where $\{s_i;i\in\bbZ_+\}$ is an increasing and
divergent sequence of nonnegative integers. A possible choice of
$\{s_i\}$ is that $s_i = 2^i - 1$ for $i \in \bbZ_+$. In this case,
$2^{i+1} - 1$ inverse matrices have been computed when the $i$-th
iteration ends, i.e., when level $2^i-1$ is the maximum of levels
involved in computing, which shows that the total number of inverse
matrices computed increases linearly with the maximum level.
\end{rem}

To the best of our knowledge, there are no previous studies on
computing the stationary distribution vector of the lower
BHMC, except for Baumann and Sandmann's work
\cite{Baum12-COR}. They proposed an algorithm for a special case of lower
BHMCs, which is referred to as the {\it
  level-dependent quasi-birth-and-death process (LD-QBD) with
  catastrophes} therein. Their algorithm is very similar to the ones
for ordinary LD-QBDs in \cite{Baum12-Procedia,Phun10-QTNA} and thus requires the
maximum $N \in \bbN$ of levels involved in computing. When $N$ is
given, Baumann and Sandmann's algorithm~\cite{Baum12-COR} generates $N$
inverse matrices by the backward recursion (\ref{recursion-R^{(k)}})
and computes the system of linear equations for
$\vc{\pi}_0$. Therefore, the computational complexity of their
algorithm is of the same order as that of Algorithm~\ref{algo-lower}
in the situation where the maximum level $N$ is determined by trial
and error.

\subsection{GI/M/1-type Markov chain}\label{subsec-GI-M-1}

In this subsection, we consider the GI/M/1-type Markov chain.  Since
the GI/M/1-type Markov chain a special case of lower block-Hessenberg
Markov chains, the results presented in this subsection can be
directly obtained from those in
Section~\ref{subsec-lower-case}. However, as we will see later, we can
establish an effective algorithm like Algorithm~\ref{algo-upper} for
the upper BHMC by using the special structure
of the GI/M/1-type Markov chain. To achieve this, we utilize the
results in Section~\ref{subsec-lower-case} in an (apparently) indirect
way.

We fix $s \in \bbN$ arbitrarily.  We assume that the ergodic generator
$\vc{Q}$ in (\ref{defn-Q-lower}) is reduced to
\begin{equation}
\vc{Q} = 
\bordermatrix{
        & \bbL_0 & \bbL_1    & \bbL_2    & \bbL_3   & \cdots
\cr
\bbL_0 		& 
\vc{B}_0  	& 
\vc{B}_1 	& 
\vc{O}  	& 
\vc{O}  	& 
\cdots
\cr
\bbL_1 		&
\vc{B}_{-1} & 
\vc{A}_0 	& 
\vc{A}_1 	& 
\vc{O}  	& 
\cdots
\cr
\bbL_2 		& 
\vc{B}_{-2}  & 
\vc{A}_{-1} & 
\vc{A}_0  	& 
\vc{A}_1 	&  
\cdots
\cr
\bbL_3 		& 
\vc{B}_{-3} & 
\vc{A}_{-2} & 
\vc{A}_{-1} & 
\vc{A}_0  	&
\cdots
\cr
~\vdots  	& 
\vdots     	& 
\vdots     	&  
\vdots    	& 
\vdots    	& 
\ddots
},
\label{defn-GIM1-type}
\end{equation}
In this case, $\presub{(s)}\wt{\vc{Q}}$ in
(\ref{partition-(n_s)wt{Q}-upper}) is reduced to
\begin{equation}
\presub{(s)}\wt{\vc{Q}}
=
\bordermatrix{
& \bbL_s
& \bbL_{s-1}
& \bbL_{s-2}
& \cdots
& \bbL_2
& \bbL_1
& \bbL_0
\cr
\bbL_s			&
\vc{A}_0 		&
\vc{A}_{-1} 	&
\vc{A}_{-2} 	&
\cdots			&
\vc{A}_{-s+2} 	&
\vc{A}_{-s+1} 	&
\vc{B}_{-s} 	
\cr
\bbL_{s-1}		&
\vc{A}_1 		&
\vc{A}_0 		&
\vc{A}_{-1} 	&
\cdots			&
\vc{A}_{-s+3} 	&
\vc{A}_{-s+2} 	&
\vc{B}_{-s+1} 	
\cr
\bbL_{s-2}		&
\vc{O} 			&
\vc{A}_1 		&
\vc{A}_0 		&
\cdots			&
\vc{A}_{-s+4} 	&
\vc{A}_{-s+3} 	&
\vc{B}_{-s+2} 	
\cr
~\vdots			&
\vdots			&
\vdots			&
\vdots			&
\ddots			&
\vdots			&
\vdots			&
\vdots			
\cr
\bbL_1			&
\vc{O}			&
\vc{O}			&
\vc{O}			&
\cdots			&
\vc{A}_1 	&
\vc{A}_0 	&
\vc{B}_{-1} 
\cr
\bbL_0			&
\vc{O}			&
\vc{O}			&
\vc{O}			&
\cdots			&
\vc{O}			&
\vc{B}_1 	&
\vc{B}_0
}.
\label{partition-(n_s)wt{Q}-GIM1}
\end{equation}
Note here that $\presub{(s)}\wt{\vc{Q}}$ in
(\ref{partition-(n_s)wt{Q}-GIM1}) is equivalent to
$\presub{(s)}\wt{\vc{Q}}$ in (\ref{partition-(n_s)wt{Q}-upper}) with
\begin{equation}
\vc{Q}_{k,\ell}
=
\left\{
\begin{array}{ll}
\vc{A}_{\ell - k}, &  \qquad  k,\ell \in \{1,2,\dots,s\},
\\
\vc{B}_{\ell - k}, & \qquad   k=0 \mbox{~or~} \ell=0,
\end{array}
\right.
\label{correspondence-Q}
\end{equation}
where $\ell \le k+1$. Substituting (\ref{correspondence-Q}) into
(\ref{defn-(n_s)wt{U}_k^*}) yields, for $s \in \bbN$,
\begin{align}
\presub{(s)}\wt{\vc{U}}_s^{\ast}
&= \left(
-\vc{B}_0
- \dm\sum_{\ell=0}^{s-1} \presub{(s)}\wt{\vc{U}}_{s,\ell} \vc{B}_{\ell-s}
\right)^{-1}, 
\label{defn-(s)R_0^*-GIM1}
\\
\presub{(s)}\wt{\vc{U}}_k^{\ast}
&=
\left(
-\vc{A}_0 
- \dm\sum_{\ell=0}^{k-1} \presub{(s)}\wt{\vc{U}}_{k,\ell} \vc{A}_{\ell-k}
\right)^{-1},
& k &\in \bbZ_{s-1},
\label{defn-(s)R_k^*-GIM1}
\end{align}
Furthermore, substituting (\ref{correspondence-Q}) into
(\ref{defn-(n_s)wt{U}_{k,l}}) yields the following: For $s \in \bbN$,
\begin{equation}
\presub{(s)}\wt{\vc{U}}_{s,\ell}
= (\vc{B}_1 \presub{(s)}\wt{\vc{U}}_{s-1}^{\ast})
\presub{(s)}\wt{\vc{U}}_{s-1,\ell}, 
\qquad  \ell \in \bbZ_{s-1},
\label{defn-(s)R_{0,l}-GIM1}
\end{equation}
and, for $k=1,2,\dots,s-1$,
\begin{eqnarray}
\presub{(s)}\wt{\vc{U}}_{k,\ell}
= (\vc{A}_1 \presub{(s)}\wt{\vc{U}}_{k-1}^{\ast}) 
   (\vc{A}_1 \presub{(s)}\wt{\vc{U}}_{k-2}^{\ast}) 
\cdots 
   (\vc{A}_1 \presub{(s)}\wt{\vc{U}}_{\ell}^{\ast}), 
\qquad \ell \in \bbZ_{k-1}.
\label{defn-(s)R_{k,l}-GIM1}
\end{eqnarray}
Since $\wt{\vc{U}}_0^{\ast} = (-\vc{A}_0)^{-1}$, we can prove by
induction that $\presub{(s)}\wt{\vc{U}}_k^{\ast}$'s in
(\ref{defn-(s)R_k^*-GIM1}) and $\presub{(s)}\wt{\vc{U}}_{k,\ell}$'s in
(\ref{defn-(s)R_{k,l}-GIM1}) are independent of $s$.  To utilize this
fact, we introduce the notation:
\begin{eqnarray}
\wt{\vc{U}}_k^{\ast}
=
\left(
-\vc{A}_0 
- \dm\sum_{\ell=0}^{k-1} \wt{\vc{U}}_{k,\ell} \vc{A}_{\ell-k}
\right)^{-1},
 \qquad k \in \bbZ_+,
\label{defn-R_k^*}
\end{eqnarray}
where $\wt{\vc{U}}_{k,\ell}$'s, $k \in\bbN$, $\ell\in\bbZ_{k-1}$, are
given by
\begin{equation}
\wt{\vc{U}}_{k,\ell}
=
(\vc{A}_1 \wt{\vc{U}}_{k-1}^{\ast}) 
(\vc{A}_1 \wt{\vc{U}}_{k-2}^{\ast}) 
\cdots 
(\vc{A}_1 \wt{\vc{U}}_{\ell}^{\ast}).
\label{defn-R_{k,l}-GIM1}
\end{equation}
Using (\ref{defn-R_k^*}) and (\ref{defn-R_{k,l}-GIM1}), we rewrite
(\ref{defn-(s)R_0^*-GIM1})--(\ref{defn-(s)R_{0,l}-GIM1}) as
\begin{align}
\presub{(s)}\wt{\vc{U}}_s^{\ast}
&= \left( 
-\vc{B}_0 - \vc{B}_1 
\sum_{\ell=0}^{s-1}\wt{\vc{U}}_{s-1}^{\ast}\wt{\vc{U}}_{s-1,\ell}\vc{B}_{\ell-s}
\right)^{-1},
\label{defn-(s)R_0^*-GIM1-02}
\\
\presub{(s)}\wt{\vc{U}}_k^{\ast}
&=
\left(
-\vc{A}_0 
- \dm\sum_{\ell=0}^{k-1} \wt{\vc{U}}_{k,\ell} \vc{A}_{k-\ell}
\right)^{-1},
& k & \in \bbZ_{s-1},
\nonumber
\\
\presub{(s)}\wt{\vc{U}}_{s,\ell}
&= \vc{B}_1 \cdot \wt{\vc{U}}_{s-1}^{\ast}\wt{\vc{U}}_{s-1,\ell}, 
& \ell &\in \bbZ_{s-1},
\label{defn-(s)R_{0,l}-GIM1-02}
\end{align}
where $\wt{\vc{U}}_{\ell,\ell} = \vc{I}$ for $\ell \in \bbZ_+$. Note
here that (\ref{eqn-(n_s)pi_{0,l}-03}) holds in the present setting since the GI/M/1-type Markov chain is a special case of the lower BHMC. Thus, substituting (\ref{defn-(s)R_{0,l}-GIM1-02}) into
(\ref{eqn-(n_s)pi_{0,l}-03}), we readily obtain, for $s \in \bbN$,
\begin{eqnarray}
\presub{(s)}\wc{\vc{\pi}}_{0}
&=& 
{
\vc{\alpha}_0 \presub{(s)}\wt{\vc{U}}_s^{\ast}
\over 
\vc{\alpha}_0 \presub{(s)}\wt{\vc{U}}_s^{\ast}
\left(
\vc{e} + \vc{B}_1 \sum_{\ell=0}^{s-1} \wt{\vc{U}}_{s-1}^{\ast}\wt{\vc{U}}_{s-1,\ell} \vc{e}
\right)
}, 
\label{defn-wt{x}_{s,0}}
\\
\presub{(s)}\wc{\vc{\pi}}_{k}
&=&
{
\vc{\alpha}_0 \presub{(s)}\wt{\vc{U}}_s^{\ast}
\vc{B}_1 \cdot \wt{\vc{U}}_{s-1}^{\ast} \wt{\vc{U}}_{s-1,s-k}
\over 
\vc{\alpha}_0 \presub{(s)}\wt{\vc{U}}_s^{\ast}
\left(
\vc{e} + \vc{B}_1 
\sum_{\ell=0}^{s-1} \wt{\vc{U}}_{s-1}^{\ast}\wt{\vc{U}}_{s-1,\ell} \vc{e}
\right)
}, \quad k =1,2,\dots,s.\qquad
\label{defn-wt{x}_{s,k}}
\end{eqnarray}
In addition, Theorem~\ref{thm-limit-row-[n]F} (together with Definition~\ref{defn-simple}) shows that
$\lim_{s\to\infty}\|\presub{(s)}\wc{\vc{\pi}} - \vc{\pi}\| = 0$ and thus
the following result holds.
\begin{thm}\label{thm-MIP-GI-M-1}
If the ergodic generator $\vc{Q}$ is given by (\ref{defn-GIM1-type}),
then
\begin{eqnarray*}
\vc{\pi}_0
&=& \lim_{s\to\infty}
{
\vc{\alpha}_0 \presub{(s)}\wt{\vc{U}}_s^{\ast}
\over 
\vc{\alpha}_0 \presub{(s)}\wt{\vc{U}}_s^{\ast}
\left(
\vc{e} + \vc{B}_1 
\sum_{\ell=0}^{s-1} \wt{\vc{U}}_{s-1}^{\ast}\wt{\vc{U}}_{s-1,\ell} \vc{e}
\right)
}, 
\\
\vc{\pi}_k
&=& \lim_{s\to\infty}
{
\vc{\alpha}_0 \presub{(s)}\wt{\vc{U}}_s^{\ast}
\vc{B}_1 \cdot \wt{\vc{U}}_{s-1}^{\ast} \wt{\vc{U}}_{s-1,s-k}
\over 
\vc{\alpha}_0 \presub{(s)}\wt{\vc{U}}_s^{\ast}
\left(
\vc{e} + \vc{B}_1 \sum_{\ell=0}^{s-1} \wt{\vc{U}}_{s-1}^{\ast} \wt{\vc{U}}_{s-1,\ell} \vc{e}
\right)
}, \qquad k \in \bbN. \quad
\end{eqnarray*}
\end{thm}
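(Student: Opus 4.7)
The plan is to deduce Theorem~\ref{thm-MIP-GI-M-1} almost immediately from Lemma~\ref{lem-limit-(n)F} together with the explicit pre-limit expressions (\ref{defn-wt{x}_{s,0}}) and (\ref{defn-wt{x}_{s,k}}) that were obtained in the preceding derivation. Those equations already express each block $\presub{n_s}\vc{\pi}_{\{0\},k}$ of the first-column-augmented truncation approximation in exactly the form that appears under the limit symbol in the claimed identities, so the only work left is to justify passing to the limit $s\to\infty$ and to identify the resulting componentwise limit as $\vc{\pi}_k$.

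First I would verify that Assumption~\ref{assumpt-f-drift}, which is needed in order to invoke Lemma~\ref{lem-limit-(n)F}, holds automatically under the block-Toeplitz structure (\ref{defn-GIM1-type}). In the GI/M/1-type generator, every diagonal entry $q(j,j)$ equals either a diagonal entry of $\vc{B}_0$ (when $j\in\bbL_0$) or a diagonal entry of $\vc{A}_0$ (when $j\in\bigcup_{k\ge 1}\bbL_k$), so $\{|q(j,j)|\}_{j\in\bbZ_+}$ takes only finitely many distinct values and is therefore bounded by some constant $M<\infty$. This gives $\vc{\Delta}\vc{e}\le M\vc{e}$ and hence $\vc{\pi}\vc{\Delta}\vc{e}\le M<\infty$; combined with the assumed ergodicity, Remark~\ref{assumpt-drift-condition} then delivers Assumption~\ref{assumpt-f-drift}.

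Next I would apply Lemma~\ref{lem-limit-(n)F} with $\nu=0$, which yields $\lim_{s\to\infty}\|\presub{n_s}\vc{e}_{\{0\}}^{\top}\presub{n_s}\vc{F}-\vc{\pi}\|=0$. By Corollary~\ref{coro-(n)pi_{nu}} this is the same as $\lim_{s\to\infty}\|\presub{n_s}\vc{\pi}_{\{0\}}-\vc{\pi}\|=0$, and in particular $\presub{n_s}\vc{\pi}_{\{0\},k}\to\vc{\pi}_k$ for every fixed $k\in\bbZ_+$ (note that the blockwise expression (\ref{defn-wt{x}_{s,k}}) is valid for all $s\ge k$). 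Substituting (\ref{defn-wt{x}_{s,0}}) and (\ref{defn-wt{x}_{s,k}}) into this limit and reading off the two cases $k=0$ and $k\in\bbN$ produces the two formulas in the theorem statement.

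There is no genuinely hard step here; the only point that requires attention is the apparent absence of Assumption~\ref{assumpt-f-drift} from the hypotheses, which I would dispose of by the boundedness argument above. The bulk of the content has already been established in the derivation of (\ref{defn-wt{x}_{s,0}})--(\ref{defn-wt{x}_{s,k}}) via Lemma~\ref{lem-(n_s)Y_{0,l}} and the translation identities (\ref{defn-(s)R_0^*-GIM1-02})--(\ref{defn-(s)R_{0,l}-GIM1-02}), so the proof of Theorem~\ref{thm-MIP-GI-M-1} is effectively just ``take the limit.''
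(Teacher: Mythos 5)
Your proposal is correct and follows essentially the same route as the paper: the author likewise obtains the pre-limit block expressions (\ref{defn-wt{x}_{s,0}})--(\ref{defn-wt{x}_{s,k}}) from Lemma~\ref{lem-(n_s)Y_{0,l}} and the GI/M/1 specialization, then invokes Lemma~\ref{lem-limit-(n)F} (via Corollary~\ref{coro-(n)pi_{nu}}) to conclude $\lim_{s\to\infty}\|\presub{n_s}\vc{\pi}_{\{0\}}-\vc{\pi}\|=0$. Your observation that Assumption~\ref{assumpt-f-drift} holds automatically because $\sup_{i}|q(i,i)|<\infty$ under the block-Toeplitz structure is exactly the content of the remark the paper places immediately after the theorem.
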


\begin{rem}
The GI/M/1-type structure (\ref{defn-GIM1-type}) of $\vc{Q}$ implies
that $\sup_{i\in \bbZ_+} |q(i,i)| < \infty$, which leads to
$\sum_{i\in \bbZ_+}\pi(i)|q(i,i)|<\infty$, i.e.,
Assumption~\ref{assumpt-1} holds (see
Remark~\ref{assumpt-drift-condition}).
\end{rem}

Using Theorem~\ref{thm-MIP-GI-M-1}, we develop an algorithm for
computing the stationary distribution vector of the GI/M/1-type Markov
chain, which is performed in a similar way to
Algorithm~\ref{algo-upper}.  For $k \in \bbZ_+$, let
$\wt{\vc{u}}_k^{\ast}$ and $\wt{\vc{U}}_{k,\ell}^{\ast}$'s, $\ell \in
\bbZ_k$, denote
\begin{align}
&&&&
\wt{\vc{u}}_k^{\ast}
&=
\sum_{\ell=0}^k \wt{\vc{U}}_k^{\ast} \wt{\vc{U}}_{k,\ell} \vc{e}, &
k &\in \bbZ_+,&&&&
\label{defn-wt{u}_s^*}
\\
&&&&
\wt{\vc{U}}_{k,\ell}^{\ast}
&=
\wt{\vc{U}}_k^{\ast} \wt{\vc{U}}_{k,\ell}, &
k &\in \bbZ_+,\ \ell \in \bbZ_k,&&&&
\label{eqn-wt{U}_{s,k}^*}
\end{align}
Note that, since $\wt{\vc{U}}_{k,k} = \vc{I}$ for $k \in \bbZ_+$, we
have $\wt{\vc{u}}_0^{\ast} =
\wt{\vc{U}}_0^{\ast}\vc{e}=(-\vc{A}_0)^{-1}\vc{e}$ and
$\wt{\vc{U}}_{k,k}^{\ast} = \wt{\vc{U}}_k^{\ast}$ for $k \in \bbZ_+$.
Note also that (\ref{defn-wt{x}_{s,0}}) and (\ref{defn-wt{x}_{s,k}})
can be rewritten in terms of $\wt{\vc{u}}^{\ast}_{s-1}$, as follows:
\begin{align}
\presub{(s)}\wc{\vc{\pi}}_{0}
&=
{
\vc{\alpha}_0\presub{(s)}\wt{\vc{U}}_s^{\ast}
\over 
\vc{\alpha}_0 \presub{(s)}\wt{\vc{U}}_s^{\ast}
\left(\vc{e} + \vc{B}_1 \wt{\vc{u}}_{s-1}^{\ast} \right)
}, 
\label{defn-wt{x}_{s,0}-02}
\\
\presub{(s)}\wc{\vc{\pi}}_{k}
&=
{
\vc{\alpha}_0\presub{(s)}\wt{\vc{U}}_s^{\ast}
\vc{B}_1\wt{\vc{U}}_{s-1,s-k}^{\ast} 
\over 
\vc{\alpha}_0 \presub{(s)}\wt{\vc{U}}_s^{\ast}
\left(\vc{e} + \vc{B}_1 \wt{\vc{u}}_{s-1}^{\ast} \right)
}, & k &= 1,2,\dots,s,
\label{defn-wt{x}_{s,k}-02}
\end{align}
where $\presub{(s)}\wt{\vc{U}}_s^{\ast}$ is given by
(\ref{eqn-(s)wt{U}_s^*-02}) below (which follows from
(\ref{defn-(s)R_0^*-GIM1-02}) and (\ref{eqn-wt{U}_{s,k}^*})):
\begin{eqnarray}
\presub{(s)}\wt{\vc{U}}_s^{\ast}
&=& \left( 
-\vc{B}_0 - \vc{B}_1 
\sum_{\ell=0}^{s-1} \wt{\vc{U}}_{s-1,\ell}^{\ast} \vc{B}_{\ell-s}
\right)^{-1}, \qquad s \in \bbN.
\label{eqn-(s)wt{U}_s^*-02}
\end{eqnarray}

In what follows, we derive the recursion of $\{\wt{\vc{u}}_k^{\ast}\}$
and $\{\wt{\vc{U}}_{k,\ell}^{\ast}\}$.  From (\ref{defn-R_{k,l}-GIM1})
and (\ref{eqn-wt{U}_{s,k}^*}), we obtain
\begin{eqnarray}
\wt{\vc{U}}_{k,\ell}
= \vc{A}_1 \wt{\vc{U}}_{k-1}^{\ast} \wt{\vc{U}}_{k-1,\ell}
= \vc{A}_1 \wt{\vc{U}}^{\ast}_{k-1,\ell},
\qquad k \in \bbN,\ \ell \in \bbZ_{k-1}.
\label{eqn-wt{U}_{k,l}}
\end{eqnarray}
Applying (\ref{eqn-wt{U}_{k,l}}) to (\ref{defn-wt{u}_s^*}),
(\ref{eqn-wt{U}_{s,k}^*}) and (\ref{defn-R_k^*}) yields
\begin{align}
\wt{\vc{u}}^{\ast}_k
&= \wt{\vc{U}}_k^{\ast} 
\left( \vc{e} + \vc{A}_1 \wt{\vc{u}}^{\ast}_{k-1}
\right),& k &\in \bbN,
\label{recursion-wt{u}^*_s}
\\
\wt{\vc{U}}_{k,\ell}^{\ast}
&= \wt{\vc{U}}_{k}^{\ast}\vc{A}_1\wt{\vc{U}}_{k-1,\ell}^{\ast}, &
k &\in \bbN,\ \ell \in \bbZ_{k-1},
\label{recursion-wt{U}_{k,l}^*_s}
\end{align}
and
\begin{eqnarray}
\wt{\vc{U}}_k^{\ast}
&=&
\left(
-\vc{A}_0 
- \vc{A}_1 \dm\sum_{\ell=0}^{k-1} \wt{\vc{U}}_{k-1,\ell}^{\ast} \vc{A}_{\ell-k}
\right)^{-1}, \qquad k \in \bbN,
\label{eqn-wt{U}_k^*-02}
\end{eqnarray}
respectively.

We are now ready to present the algorithm for the GI/M/1-type Markov
chain, which is described in Algorithm~\ref{algo-GI-M-1} below.

{
\renewcommand{\baselinestretch}{0.8}\selectfont
\begin{algorithm}[H]\label{algo-GI-M-1}
\caption{GI/M/1-type Markov chain}
{\bf Input}: $\vc{Q}$, $\varepsilon \in (0,1)$, $\vc{\alpha}_0$.
\\ 
{\bf Output}: 
$\presub{(s)}\wc{\vc{\pi}} = (\presub{(s)}\wc{\vc{\pi}}_{0},\presub{(s)}\wc{\vc{\pi}}_{1},\dots,\presub{(s)}\wc{\vc{\pi}}_{s})$,
where $s \in \bbN$ is fixed when the iteration stops.
%
\smallskip
\begin{enumerate}
\item Set $s = 0$ and $\presub{(s)}\wc{\vc{\pi}} = \vc{0}$.
\item Compute 
\[
\wt{\vc{U}}_0^{\ast} = \wt{\vc{U}}_{0,0}^{\ast} = (-\vc{A}_0)^{-1},\qquad 
\wt{\vc{u}}_0^{\ast} = \wt{\vc{U}}_0^{\ast}\vc{e}.
\]
\item Iterate the following:
\begin{enumerate}
\item Increment $s$ by one.
\item Compute $\presub{(s)}\wt{\vc{U}}_s^{\ast}$ by
  (\ref{eqn-(s)wt{U}_s^*-02}).
\item Compute $\{\presub{(s)}\wc{\vc{\pi}}_{k};k\in\bbZ_s\}$ by
  (\ref{defn-wt{x}_{s,0}-02}) and (\ref{defn-wt{x}_{s,k}-02}).
\item If $\| \presub{(s)}\wc{\vc{\pi}} - \presub{(s-1)}\wc{\vc{\pi}} \| <
  \varepsilon$, then stop the iteration; otherwise go to step~(iii.e).
\item Compute $\wt{\vc{U}}_s^{\ast}\, (= \wt{\vc{U}}_{s,s}^{\ast})$,
  $\wt{\vc{u}}^{\ast}_s$ and $\{\wt{\vc{U}}_{s,\ell}^{\ast};\ell\in\bbZ_{s-1}\}$ by (\ref{eqn-wt{U}_k^*-02}),
  (\ref{recursion-wt{u}^*_s}) and (\ref{recursion-wt{U}_{k,l}^*_s}),
  where $k=s$; and then return to step~(iii.a).
\end{enumerate}
\end{enumerate}
\end{algorithm}
\renewcommand{\baselinestretch}{1.1}\selectfont
}

\appendix

\section{Proofs}

\subsection{Proof of (\ref{lim-(n)Q_{<=m}^*})}\label{proof-lim-(n)Q_{<=m}^*}

To avoid repeating the same phrase, fix $m \in \bbZ_+$ arbitrarily,
and let $n \in \bbZ_+ \setminus \bbZ_m$.  It follows from
(\ref{lim-(n)Q=Q}), (\ref{partition-Q}) and (\ref{partition-(n)ol{Q}})
that
\begin{align*}
\lim_{n\to\infty} \presub{[n]}\ol{\vc{Q}}_{\bbZ_m}
&=\vc{Q}_{\bbZ_m}, &
\lim_{n\to\infty} \presub{[n]}\ol{\vc{Q}}_{\bbZ_m,\ol{\bbZ}_m}
&=\vc{Q}_{\bbZ_m,\ol{\bbZ}_m},
\\
\lim_{n\to\infty} \presub{[n]}\ol{\vc{Q}}_{\ol{\bbZ}_m,\bbZ_m}
&=\vc{Q}_{\ol{\bbZ}_m,\bbZ_m}, &
\lim_{n\to\infty} \presub{[n]}\ol{\vc{Q}}_{\ol{\bbZ}_m}
&=\vc{Q}_{\ol{\bbZ}_m}.
\end{align*}
According to these limits together with (\ref{defn-Q_{<=m}}) and
(\ref{defn-(n)ol{Q}_{<=m}}), it suffices to show that
\begin{equation*}
\lim_{n\to\infty} (-\presub{[n]}\ol{\vc{Q}}_{\ol{\bbZ}_m})^{-1}
= (-\vc{Q}_{\ol{\bbZ}_m})^{-1}.
\label{lim-[-(n)ol{Q}_{>m}]^{-1}}
\end{equation*}
Note that $\bbZ_n \setminus\bbZ_m \nearrow Z_+ \setminus\bbZ_m$ as $n \to
\infty$ and that $\presub{[n]}\vc{Q}_{\ol{\bbZ}_m}$ is a principal submatrix of
the $Q$-matrix $\vc{Q}_{\ol{\bbZ}_m}$. Thus, we have (see \cite[Chapter~2,
  Proposition~2.14]{Ande91}), for $i,j \in \bbZ_+\setminus\bbZ_m$ and
$t > 0$,
\begin{equation}
\left[ \exp\left\{ \presub{[n]}\vc{Q}_{\ol{\bbZ}_m} t\right\} \right]_{i,j}
\nearrow
\left[ \exp\left\{ \vc{Q}_{\ol{\bbZ}_m} t\right\} \right]_{i,j} \quad  \mbox{as $n \to \infty$}.
\label{increasing-(n)p_{>m}^{(t)}}
\end{equation}
From (\ref{defn-(n)ol{Q}}), we also have
\begin{equation}
[\presub{[n]}\ol{\vc{Q}}]_{i,j}
= [\presub{[n]}\vc{Q}_{\ol{\bbZ}_m} ]_{i,j}
+ \sum_{\ell=n+1}^{\infty}q(i,\ell) \presub{[n]}\alpha(j),
\qquad
i,j\in\bbZ_n\setminus\bbZ_m,
\label{eqn-(n)ol{Q}}
\end{equation}
which leads to
\begin{eqnarray}
\left[ \exp\left\{ \presub{[n]}\ol{\vc{Q}}_{\ol{\bbZ}_m} t\right\} \right]_{i,j}
&\ge&
\left[ \exp\left\{ \presub{[n]}\vc{Q}_{\ol{\bbZ}_m} t\right\} \right]_{i,j},
\qquad
i,j\in\bbZ_n\setminus\bbZ_m,\ t > 0.
\label{ineqn-(n)ol{p}_{>m}^{(t)}-01}
\end{eqnarray}
In addition, for $i,j\in\bbZ_n\setminus\bbZ_m$ and $t > 0$,
\begin{eqnarray}
\lefteqn{
\left[ \exp\left\{ \presub{[n]}\vc{Q}_{\ol{\bbZ}_m} t\right\} \right]_{i,j}
}
\quad &&
\nonumber
\\
&=& \PP\left(Z(t) = j,
\sup_{u \in [0,t]} Z(u) \le n, \inf_{u \in [0,t]} Z(u) \ge m+1 \mid Z(0) = i
\right).
\label{eqn-170315-01}
\end{eqnarray}

We now define $\presub{[n]}\delta_{\ol{\bbZ}_m}^{(t)}\!(i)$, $i \in
\bbZ_n\setminus\bbZ_m$, $t > 0$, as
\begin{eqnarray}
\presub{[n]}\delta_{\ol{\bbZ}_m}^{(t)}\!(i) 
&=& \sum_{j=m+1}^{\infty} 
\left[ \exp\left\{ \vc{Q}_{\ol{\bbZ}_m} t\right\} \right]_{i,j} 
- 
\sum_{j=m+1}^n \left[ \exp\left\{ \presub{[n]}\vc{Q}_{\ol{\bbZ}_m} t\right\} \right]_{i,j}\nonumber
\\
&=& \PP\left(
\sup_{u \in [0,t]} Z(u) \ge n+1, \inf_{u \in [0,t]} Z(u) \ge m+1 \mid Z(0) = i
\right).
\label{defn-(n)delta^{(t)}(i)}
\end{eqnarray}
It then follows from (\ref{eqn-(n)ol{Q}}), (\ref{eqn-170315-01}),
(\ref{defn-(n)delta^{(t)}(i)}) and $\sum_{j=m+1}^n \presub{[n]}\alpha(j)
\le 1$ that, for $i,j \in \bbZ_n\setminus\bbZ_m$ and $t > 0$,
\begin{eqnarray*}
\left[ \exp\left\{ \presub{[n]}\ol{\vc{Q}}_{\ol{\bbZ}_m} t\right\} \right]_{i,j}
&\le& \left[ \exp\left\{ \presub{[n]}\vc{Q}_{\ol{\bbZ}_m} t\right\} \right]_{i,j} 
+ \presub{[n]}\delta_{\ol{\bbZ}_m}^{(t)}\!(i).
\end{eqnarray*}
Combining this and (\ref{ineqn-(n)ol{p}_{>m}^{(t)}-01}) yields, for
$i,j \in \bbZ_n\setminus\bbZ_m$ and $t > 0$,
\begin{equation}
\left[ \exp\left\{ \presub{[n]}\vc{Q}_{\ol{\bbZ}_m} t\right\} \right]_{i,j} 
\le
\left[ \exp\left\{ \presub{[n]}\ol{\vc{Q}}_{\ol{\bbZ}_m} t\right\} \right]_{i,j}
\le \left[ \exp\left\{ \presub{[n]}\vc{Q}_{\ol{\bbZ}_m} t\right\} \right]_{i,j} 
+ \presub{[n]}\delta_{\ol{\bbZ}_m}^{(t)}\!(i).
\label{ineqn-(n)ol{p}_{>m}^{(t)}}
\end{equation}
It also follows from (\ref{increasing-(n)p_{>m}^{(t)}}) and
(\ref{defn-(n)delta^{(t)}(i)}) that, for $i\in\bbZ_+$ and $t > 0$,
\begin{equation}
\presub{[n]}\delta_{\ol{\bbZ}_m}^{(t)}\!(i) 
\searrow
0 \quad  \mbox{as $n \to \infty$}.
\label{decreasing-(n)delta^{(t)}}
\end{equation}
Using (\ref{decreasing-(n)delta^{(t)}}) and the monotone convergence
theorem, we have
\begin{eqnarray*}
\lefteqn{
\int_0^{\infty} \presub{[m+1]}\delta_{\ol{\bbZ}_m}^{(t)}(i)\rd t
- 
\lim_{n\to\infty}\int_0^{\infty}
\presub{[n]}\delta_{\ol{\bbZ}_m}^{(t)}\!(i)
\rd t
}
\nonumber
\\
&=&
\lim_{n\to\infty}\int_0^{\infty}
\left\{ 
\presub{[m+1]}\delta_{\ol{\bbZ}_m}^{(t)}(i) - \presub{[n]}\delta_{\ol{\bbZ}_m}^{(t)}\!(i)
\right\} \rd t
\nonumber
\\
&=& \int_0^{\infty}
\presub{[m+1]}\delta_{\ol{\bbZ}_m}^{(t)}(i) \rd t,
\end{eqnarray*}
which yields
\begin{equation}
\lim_{n\to\infty}\int_0^{\infty}
\presub{[n]}\delta_{\ol{\bbZ}_m}^{(t)}\!(i)
\rd t = 0,\qquad i\in\bbZ_+,\ t > 0.
\label{lim_int_(n)delta_{>m}^{(t)}(i)}
\end{equation}
Furthermore, using (\ref{increasing-(n)p_{>m}^{(t)}})
(\ref{ineqn-(n)ol{p}_{>m}^{(t)}}),
(\ref{lim_int_(n)delta_{>m}^{(t)}(i)}) and the monotone convergence
theorem, we obtain
\begin{eqnarray*}
\lim_{n\to\infty}\int_0^{\infty} \exp\{ \presub{[n]}\ol{\vc{Q}}_{\ol{\bbZ}_m} t \}  \rd t
&=& \lim_{n\to\infty}\int_0^{\infty} \exp\{ \presub{[n]}\vc{Q}_{\ol{\bbZ}_m} t \} \rd t
= \int_0^{\infty}\exp\{ \vc{Q}_{\ol{\bbZ}_m} t \} \rd t,
\end{eqnarray*}
which leads to
\begin{eqnarray*}
\lim_{n\to\infty} (- \presub{[n]}\ol{\vc{Q}}_{\ol{\bbZ}_m} )^{-1}
&=& \lim_{n\to\infty} (- \presub{[n]}\vc{Q}_{\ol{\bbZ}_m} )^{-1} = (- \vc{Q}_{\ol{\bbZ}_m})^{-1}.
\end{eqnarray*}
The proof is completed.

\subsection{Proof of (\ref{eqn-pi_l-pi_k*U_{k,l}})}\label{proof-pi_l-pi_k*U_{k,l}}

It follows from (\ref{defn-Q-MG1-type}) and
(\ref{partition-(n_s)Q-upper}) that the first $k$ blocks of
$\vc{\pi}\vc{Q}=\vc{0}$ are given by
\begin{eqnarray*}
&&
(\vc{\pi}_0,\vc{\pi}_1,\dots,\vc{\pi}_{k-1})\presub{(k-1)}\vc{Q}
 {} + 
(\vc{\pi}_k,\vc{\pi}_{k+1},\dots)
\left(
\begin{array}{cccc}
\vc{O} &  \cdots & \vc{O} & \vc{Q}_{k,k-1}
\\
\vc{O} &  \cdots & \vc{O} & \vc{O}
\\
\vdots &  \ddots & \vdots & \vdots
\\
\vc{O} &  \cdots & \vc{O} & \vc{O}
\end{array}
\right) = \vc{0},
\end{eqnarray*}
and thus
\begin{eqnarray*}
\lefteqn{
(\vc{\pi}_0,\vc{\pi}_1,\dots,\vc{\pi}_{k-1})
}
\quad &&
\nonumber
\\
&=& (\vc{\pi}_k,\vc{\pi}_{k+1},\dots)
\left(
\begin{array}{cccc}
\vc{O} &  \cdots & \vc{O} & \vc{Q}_{k,k-1}
\\
\vc{O} &  \cdots & \vc{O} & \vc{O}
\\
\vdots &  \ddots & \vdots & \vdots
\\
\vc{O} &  \cdots & \vc{O} & \vc{O}
\end{array}
\right)
(- \presub{(k-1)}\vc{Q} )^{-1}
\nonumber
\\
&=& (\vc{O},\dots,\vc{O}, \vc{\pi}_k\vc{Q}_{k,k-1})
(- \presub{(k-1)}\vc{Q} )^{-1}
\nonumber
\\
&=& \vc{\pi}_k\vc{Q}_{k,k-1}
(\presub{(k-1)}\vc{X}_{k-1,0},\presub{(k-1)}\vc{X}_{k-1,1},
\dots
\presub{(k-1)}\vc{X}_{k-1,k-1}),
\end{eqnarray*}
where the last equality follows from
(\ref{defn-(-Q)^{-1}-MG1-type}). Therefore,
\[
\vc{\pi}_{\ell}
= \vc{\pi}_k\vc{Q}_{k,k-1} \presub{(k-1)}\vc{X}_{k-1,\ell},
\qquad \ell \in \bbZ_{k-1}.
\]
Applying Lemma~\ref{lem-(n_s)X_{s,l}} and
(\ref{defn-U_{k,l}}) to the above equality, we have
\[
\vc{\pi}_{\ell}
= \vc{\pi}_k\vc{Q}_{k,k-1} \vc{U}_{k-1}^{\ast}\vc{U}_{k-1,\ell}
= \vc{\pi}_k \vc{U}_{k,\ell},
\qquad \ell \in \bbZ_{k-1},
\]
which shows that (\ref{eqn-pi_l-pi_k*U_{k,l}}) holds.

\section{Discussion on matrix $\vc{U}_k^{\ast}$}

\subsection{Nonsingularity of $\vc{U}_k^{\ast}$}\label{appen-T_k^*}

Let $\vc{T}_k^{\ast}$, $k\in\bbZ_+$, denote
\begin{eqnarray}
\vc{T}_k^{\ast}
&=& \vc{Q}_{k,k} 
+ \sum_{\ell=0}^{k-1} \vc{U}_{k,\ell}\vc{Q}_{\ell,k},
\qquad k\in\bbZ_+.
\label{defn-T_k^*}
\end{eqnarray} 
Substituting (\ref{defn-T_k^*}) into (\ref{defn-U_k^*}) yields
$\vc{U}_k^{\ast} = (-\vc{T}_k^{\ast})^{-1}$ for $k \in
\bbZ_+$. Furthermore, (\ref{defn-T_k^*}) shows $\vc{T}_0^{\ast} =
\vc{Q}_{0,0}$ since the empty sum is defined as zero. Note here that
$\vc{Q}_{0,0}$ is the $(0,0)$-th block, i.e., the zero-th diagonal
block of the partitioned ergodic generator $\vc{Q}$ in
(\ref{partitioned-Q}), which implies that $\vc{T}_0^{\ast} =
\vc{Q}_{0,0}$ is nonsingular. In what follows, we prove by induction
the nonsingularity of $\vc{T}_k^{\ast}$ for $k \in \bbN$.

We suppose that there exists some $k \in \bbN$ such that, for all $m
\in \bbZ_{k-1}$, $\vc{T}_m^{\ast}$ is nonsingular and thus
$\vc{U}_m^{\ast} = (-\vc{T}_m^{\ast})^{-1}$ is well-defined. We then
partition $\presub{(k)}\vc{Q}$ as
\begin{equation}
\presub{(k)}\vc{Q}
=\left(
\begin{array}{cccc|c}
 				&
 				&
 				&
				&
\vc{Q}_{0,k} 
\\
 				&
 				&
 \mbox{\down{3mm}{$\presub{(k-1)}\vc{Q}$}} 				&
				&
\vc{Q}_{1,k} 	
\\
 				&
 				&
 				&
				&
\vdots
\\
 				&
 				&
 				&
				&
\vc{Q}_{k-1,k} 	 	
\\
\hline
\vc{O}			&
~~\cdots \hspace{-2mm}		&
\vc{O}			&
\vc{Q}_{k,k-1}	&
\vc{Q}_{k,k}
\end{array}
\right).
\label{partition-(n_k)Q}
\end{equation}
Since the generator $\vc{Q}$ is ergodic, its diagonal blocks
$\vc{Q}_{k,k}$ and $\presub{(k-1)}\vc{Q}$ are
nonsingular. Furthermore, $\presub{(k)}\vc{X}_{k,k}$, i.e., the
$(k,k)$-th block of $(-\presub{(k)}\vc{Q})^{-1}$, is given by (see,
e.g., \cite[Section 0.7.3]{Horn90})
\begin{eqnarray}
\presub{(k)}\vc{X}_{k,k}
&=& \left[
- \vc{Q}_{k,k}
- (\vc{O},\dots,\vc{O},\vc{Q}_{k,k-1})
(-\presub{(k-1)}\vc{Q})^{-1}
\left(
\begin{array}{c}
\vc{Q}_{0,k} 
\\
\vc{Q}_{1,k} 
\\
\vdots
\\
\vc{Q}_{k-1,k} 
\end{array}
\right)
\right]^{-1}
\nonumber
\\
&=& \left(
- \vc{Q}_{k,k}
- \vc{Q}_{k,k-1} \sum_{\ell=0}^{k-1} \presub{(k-1)}\vc{X}_{k-1,\ell}
\vc{Q}_{\ell,k} 
\right)^{-1},
\label{eqn-(n_k)X_{k,k}}
\end{eqnarray}
where the second equality follows from (\ref{defn-(-Q)^{-1}-MG1-type}); more specifically, the fact that the last block row of
$(-\presub{(k-1)}\vc{Q})^{-1}$ is equal to
\[
(\presub{(k-1)}\vc{X}_{k-1,0},\presub{(k-1)}\vc{X}_{k-1,1},\dots,\presub{(k-1)}\vc{X}_{k-1,k-1}).
\]
Applying
Lemma~\ref{lem-(n_s)X_{s,l}} to (\ref{eqn-(n_k)X_{k,k}}) and using
(\ref{defn-U_{k,l}}) and (\ref{defn-T_k^*}) yields
\begin{eqnarray}
\presub{(k)}\vc{X}_{k,k}
&=& \left(
- \vc{Q}_{k,k}
- \vc{Q}_{k,k-1}\vc{U}_{k-1}^{\ast}
\sum_{\ell=0}^{k-1} \vc{U}_{k-1,\ell}\vc{Q}_{\ell,k} 
\right)^{-1}
\nonumber
\\
&=& \left(
- \vc{Q}_{k,k}
- \sum_{\ell=0}^{k-1} \vc{U}_{k,\ell}\vc{Q}_{\ell,k} 
\right)^{-1}
= (-\vc{T}_k^{\ast})^{-1}.
\label{eqn-(n_k)X_{k,k}-02}
\end{eqnarray}
As a
result, we have proved by induction that $\vc{T}_k^{\ast}$ is
nonsingular for all $k \in \bbZ_+$.

\begin{rem}\label{rem-nonnegative-U_k^*}
Since $(-\presub{(k)}\vc{Q})^{-1}$ is nonnegative, it follows from (\ref{eqn-(n_k)X_{k,k}-02}) and $\vc{U}_k^{\ast} = (-\vc{T}_k^{\ast})^{-1}$ that $\presub{(k)}\vc{X}_{k,k} = (-\vc{T}_k^{\ast})^{-1} = \vc{U}_k^{\ast} \ge \vc{O}$.
In addition, since
the original generator $\vc{Q}$ is ergodic, its diagonal elements are all finite and negative. Therefore, all the diagonal elements of $(-\presub{(k)}\vc{Q})^{-1}$ is positive. This fact implies that
\[
\presub{(k)}\vc{X}_{k,k}\vc{e}
= (-\vc{T}_k^{\ast})^{-1}\vc{e} 
= \vc{U}_k^{\ast}\vc{e} 
> \vc{0}.
\]
\end{rem}

\subsection{Computation of $\vc{U}_k^{\ast}$}\label{appen-comp-U_k^*}

In this subsection, we discuss the computation of
$\vc{U}_k^{\ast}=(-\vc{T}_k^{\ast})^{-1}$. We begin with the
following lemma.
\begin{lem}\label{lem-T_k^*}
For $k \in \bbZ_+$, the matrix $\vc{T}_k^{\ast}$ is a $Q$-matrix,
i.e., all the nondiagonal elements of $\vc{T}_k^{\ast}$ are
nonnegative and $\vc{T}_k^{\ast} \vc{e} \le \vc{0}$.
\end{lem}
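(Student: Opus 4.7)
The plan is to identify $\vc{T}_k^{\ast}$ as a Schur complement inside $-\presub{n_k}\vc{Q}$, as already exploited in Appendix~\ref{appen-T_k^*}, and then exploit two elementary facts: the fundamental matrix $(-\presub{n_{k-1}}\vc{Q})^{-1}$ is entrywise nonnegative, and the generator $\vc{Q}$ satisfies $\vc{Q}\vc{e}=\vc{0}$ with nonnegative off-diagonal entries. The base case $k=0$ reduces to $\vc{T}_0^{\ast}=\vc{Q}_{0,0}$, which is a diagonal block of the ergodic $\vc{Q}$ and therefore has nonnegative off-diagonals and satisfies $\vc{Q}_{0,0}\vc{e}=-\sum_{\ell\ge 1}\vc{Q}_{0,\ell}\vc{e}\le\vc{0}$.

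For $k\ge 1$, the first step is to show that every $\vc{U}_{k,\ell}$ with $\ell\in\bbZ_{k-1}$ is entrywise nonnegative. Since each $\vc{U}_m^{\ast}=\presub{n_m}\vc{X}_{m,m}$ is a block of the nonnegative fundamental matrix $(-\presub{n_m}\vc{Q})^{-1}$ (this identification was obtained at the end of Appendix~\ref{appen-T_k^*}), and each subdiagonal block $\vc{Q}_{j+1,j}$ of $\vc{Q}$ is nonnegative, the product formula~(\ref{defn-U_{k,l}}) yields $\vc{U}_{k,\ell}\ge\vc{O}$. Combined with the nonnegativity of $\vc{Q}_{\ell,k}$ for $\ell<k$, this makes every term $\vc{U}_{k,\ell}\vc{Q}_{\ell,k}$ entrywise nonnegative, so the off-diagonal entries of $\vc{T}_k^{\ast}$ in (\ref{defn-T_k^*}) inherit nonnegativity from those of $\vc{Q}_{k,k}$.

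The main obstacle is the row-sum inequality $\vc{T}_k^{\ast}\vc{e}\le\vc{0}$. Partitioning $\presub{n_k}\vc{Q}$ as in (\ref{partition-(n_k)Q}) with off-diagonal blocks $\vc{B}=(\vc{Q}_{0,k}^{\top},\dots,\vc{Q}_{k-1,k}^{\top})^{\top}$ and $\vc{C}=(\vc{O},\dots,\vc{O},\vc{Q}_{k,k-1})$, the derivation (\ref{eqn-(n_k)X_{k,k}})--(\ref{eqn-(n_k)X_{k,k}-02}) already identifies
\[
\vc{T}_k^{\ast}=\vc{Q}_{k,k}+\vc{C}(-\presub{n_{k-1}}\vc{Q})^{-1}\vc{B}.
\]
Reading $\presub{n_k}\vc{Q}\vc{e}\le\vc{0}$ block-wise, I would extract the two inequalities $\vc{B}\vc{e}\le-\presub{n_{k-1}}\vc{Q}\vc{e}$ and $\vc{Q}_{k,k}\vc{e}+\vc{C}\vc{e}\le\vc{0}$. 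Pre-multiplying the first by the nonnegative matrix $(-\presub{n_{k-1}}\vc{Q})^{-1}$ gives $(-\presub{n_{k-1}}\vc{Q})^{-1}\vc{B}\vc{e}\le\vc{e}$, and pre-multiplying once more by the nonnegative $\vc{C}$ yields $\vc{C}(-\presub{n_{k-1}}\vc{Q})^{-1}\vc{B}\vc{e}\le\vc{C}\vc{e}$; combining this with the second inequality gives $\vc{T}_k^{\ast}\vc{e}\le\vc{Q}_{k,k}\vc{e}+\vc{C}\vc{e}\le\vc{0}$. Beyond the conformal block bookkeeping, the argument uses no machinery beyond what was already deployed in Appendix~\ref{appen-T_k^*}, so I do not anticipate any genuine technical difficulty.
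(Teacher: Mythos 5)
Your proposal is correct and follows essentially the same route as the paper's own proof: identify $\vc{T}_k^{\ast}$ as the Schur complement $\vc{Q}_{k,k}+(\vc{O},\dots,\vc{O},\vc{Q}_{k,k-1})(-\presub{n_{k-1}}\vc{Q})^{-1}(\vc{Q}_{0,k}^{\top},\dots,\vc{Q}_{k-1,k}^{\top})^{\top}$, get off-diagonal nonnegativity from the nonnegativity of the added term, and obtain $\vc{T}_k^{\ast}\vc{e}\le\vc{0}$ by reading $\presub{n_k}\vc{Q}\vc{e}\le\vc{0}$ block-wise and pre-multiplying by the nonnegative matrices $(-\presub{n_{k-1}}\vc{Q})^{-1}$ and $\vc{Q}_{k,k-1}$. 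The only cosmetic difference is that you argue off-diagonal nonnegativity via the product representation of the $\vc{U}_{k,\ell}$'s rather than directly from the Schur-complement form, which changes nothing of substance.
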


\medskip

\proof From (\ref{eqn-(n_k)X_{k,k}}), (\ref{eqn-(n_k)X_{k,k}-02}) and
(\ref{defn-(-Q)^{-1}-MG1-type}), we have
\begin{eqnarray}
\vc{T}_k^{\ast} 
&=& (-\presub{(k)}\vc{X}_{k,k})^{-1}
\nonumber
\\
&=&  \vc{Q}_{k,k}
+ (\vc{O},\dots,\vc{O},\vc{Q}_{k,k-1})
(-\presub{(k-1)}\vc{Q})^{-1}
\left(
\begin{array}{c}
\vc{Q}_{0,k} 
\\
\vc{Q}_{1,k} 
\\
\vdots
\\
\vc{Q}_{k-1,k} 
\end{array}
\right),
\label{eqn-T_k^*}
\end{eqnarray}
where $\vc{Q}_{k,k}$ is a $Q$-matrix and the second term of
(\ref{eqn-T_k^*}) is nonnegative. Therefore, it suffices to show
$\vc{T}_k^{\ast} \vc{e} \le \vc{0}$. It follows from
(\ref{partition-(n_k)Q}) and $\presub{(k)}\vc{Q}\vc{e} \le \vc{0}$
that
\[
\presub{(k-1)}\vc{Q}\vc{e} + \left(
\begin{array}{c}
\vc{Q}_{0,k} 
\\
\vc{Q}_{1,k} 
\\
\vdots
\\
\vc{Q}_{k-1,k} 
\end{array}
\right) \vc{e}
\le \vc{0},
\]
which leads to
\[
(-\presub{(k-1)}\vc{Q})^{-1}
\left(
\begin{array}{c}
\vc{Q}_{0,k} 
\\
\vc{Q}_{1,k} 
\\
\vdots
\\
\vc{Q}_{k-1,k} 
\end{array}
\right) \vc{e}
\le \vc{e}.
\]
Using this inequality and (\ref{eqn-T_k^*}), we obtain
\begin{eqnarray*}
\vc{T}_k^{\ast}\vc{e}
&=&
\vc{Q}_{k,k} \vc{e}
+ 
(\vc{O},\dots,\vc{O},\vc{Q}_{k,k-1})(-\presub{(k-1)}\vc{Q})^{-1}
\left(
\begin{array}{c}
\vc{Q}_{0,k} 
\\
\vc{Q}_{1,k} 
\\
\vdots
\\
\vc{Q}_{k-1,k} 
\end{array}
\right) \vc{e}
\nonumber
\\
&\le& \vc{Q}_{k,k} \vc{e} +  \vc{Q}_{k,k-1}\vc{e} \le 
\sum_{\ell=0}^{\infty}\vc{Q}_{k,\ell} \vc{e} = \vc{0},
\end{eqnarray*}
where the last inequality follows from
$\sum_{\ell\in\bbZ_+}\vc{Q}_{k,\ell} \vc{e} = \vc{0}$ and
$\sum_{\ell\in\bbZ_+\setminus\{k\}}\vc{Q}_{k,\ell} \vc{e} \ge \vc{0}$.
The statement of the present lemma has been proved.  \qed

\medskip

We now define $\vc{P}_k^{\ast}$, $k\in\bbZ_+$ as
\begin{equation}
\vc{P}_k^{\ast}
= \vc{I} + \vc{T}_k^{\ast} / \theta_k,
\label{defn-P_k^*}
\end{equation}
where $\theta_k$ denotes the maximum of the absolute values of the
diagonal elements of $\vc{T}_k^{\ast}$.  It follows from
Lemma~\ref{lem-T_k^*} and the nonsingularity of $\vc{T}_k^{\ast}$ that
$\vc{P}_k^{\ast}$ is strictly substochastic, i.e., $\vc{P}_k^{\ast}
\ge \vc{O}$, $\vc{P}_k^{\ast}\vc{e} \le \vc{e}, \neq \vc{e}$ and ${\rm
  sp}(\vc{P}_k^{\ast}) < 1$, where ${\rm sp}(\vc{P}_k^{\ast})$ denotes
the spectral radius of $\vc{P}_k^{\ast}$. Thus, from
(\ref{defn-P_k^*}), we have
\begin{equation}
(-\vc{T}_k^{\ast})^{-1}
= \theta_k^{-1}(\vc{I} - \vc{P}_k^{\ast} )^{-1}
= \theta_k^{-1} \sum_{m=0}^{\infty} (\vc{P}_k^{\ast})^m \ge \vc{O}, \neq \vc{O}.
\label{eqn-(-T_k)^{-1}}
\end{equation}
According to (\ref{eqn-(-T_k)^{-1}}), we can obtain
$(-\vc{T}_k^{\ast})^{-1}$ approximately by computing
$\vc{P}_k^{\ast},(\vc{P}_k^{\ast})^2,\dots,\break (\vc{P}_k^{\ast})^M$
for sufficiently large $M \in \bbN$ and summing them up. However, Le
Boudec~\cite{Le-Boud91} proposes a more efficient algorithm for
computing $(-\vc{T}_k^{\ast})^{-1}$, which is based on the following
proposition.
\begin{prop}[{}{\cite[Proposition 1]{Le-Boud91}}]\label{prop-Le-Boud91}
Let $\{\vc{V}_n;n\in\bbZ_+\}$ and $\{\vc{W}_n;n\in\bbZ_+\}$ denote
sequences of matrices such that
\begin{eqnarray}
\vc{V}_n
&=&
\left\{
\begin{array}{ll}
\vc{P}_k^{\ast}, & \quad n = 0,
\\
(\vc{V}_{n-1})^2, & \quad n \in \bbN,
\end{array}
\right.
\label{defn-V_n}
\\
\vc{W}_n
&=&
\left\{
\begin{array}{ll}
\vc{I}, & \quad n = 0,
\\
(\vc{I} + \vc{V}_{n-1})\vc{W}_{n-1}, & \quad n \in \bbN.
\end{array}
\right.
\label{defn-W_n}
\end{eqnarray}
It then holds that
\begin{eqnarray*}
\lim_{n\to\infty}\vc{W}_n &=& (\vc{I} - \vc{P}_k^{\ast} )^{-1}.
\end{eqnarray*}
\end{prop}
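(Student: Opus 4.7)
The plan is to exploit a doubling trick: show that $\vc{V}_n$ is the $2^n$-th power of $\vc{P}_k^{\ast}$ and that $\vc{W}_n$ is the partial sum of the Neumann series $\sum_{m=0}^{\infty}(\vc{P}_k^{\ast})^m$ up to $m=2^n-1$. Convergence then follows from the fact, already established in the discussion preceding (\ref{eqn-(-T_k)^{-1}}), that $\vc{P}_k^{\ast}$ is strictly substochastic with spectral radius less than one.

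First I would verify by an easy induction on $n \in \bbZ_+$ that $\vc{V}_n = (\vc{P}_k^{\ast})^{2^n}$. The base case $n=0$ is immediate from the definition of $\vc{V}_0$, and the inductive step follows at once from $\vc{V}_n = (\vc{V}_{n-1})^2 = ((\vc{P}_k^{\ast})^{2^{n-1}})^2 = (\vc{P}_k^{\ast})^{2^n}$.

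Next, using this formula for $\vc{V}_n$, I would prove by induction that
\[
\vc{W}_n = \sum_{m=0}^{2^n - 1} (\vc{P}_k^{\ast})^m, \qquad n \in \bbZ_+.
\]
The base case $\vc{W}_0 = \vc{I}$ is the empty-sum-plus-identity convention. For the inductive step, using the recurrence (\ref{defn-W_n}) together with $\vc{V}_{n-1} = (\vc{P}_k^{\ast})^{2^{n-1}}$, I would write
\[
\vc{W}_n
= \bigl(\vc{I} + (\vc{P}_k^{\ast})^{2^{n-1}}\bigr)
\sum_{m=0}^{2^{n-1}-1}(\vc{P}_k^{\ast})^m
= \sum_{m=0}^{2^{n-1}-1}(\vc{P}_k^{\ast})^m
+ \sum_{m=2^{n-1}}^{2^n -1}(\vc{P}_k^{\ast})^m,
\]
which telescopes to $\sum_{m=0}^{2^n-1}(\vc{P}_k^{\ast})^m$, as desired.

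Finally, I would invoke the fact (noted just before (\ref{eqn-(-T_k)^{-1}})) that ${\rm sp}(\vc{P}_k^{\ast})<1$, so that the Neumann series $\sum_{m=0}^{\infty}(\vc{P}_k^{\ast})^m$ converges entrywise to $(\vc{I} - \vc{P}_k^{\ast})^{-1}$. Since $\{\vc{W}_n\}$ is the subsequence of partial sums indexed by $2^n$, it converges to the same limit. There is no real obstacle here; the only subtlety is to ensure that the identity $\vc{V}_n = (\vc{P}_k^{\ast})^{2^n}$ is recorded explicitly so that the inductive computation of $\vc{W}_n$ cleanly produces consecutive powers rather than overlapping or missing terms.
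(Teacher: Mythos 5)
Your proof is correct and follows essentially the same route the paper indicates: the paper states the identity $\vc{W}_n = \sum_{m=0}^{2^n-1}(\vc{P}_k^{\ast})^m$ immediately after the proposition and relies on ${\rm sp}(\vc{P}_k^{\ast})<1$ from (\ref{eqn-(-T_k)^{-1}}) for convergence of the Neumann series, exactly as you do. Your two inductions and the telescoping step are all sound.
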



It follows from (\ref{defn-V_n}) and (\ref{defn-W_n}) that $\vc{W}_n =
\sum_{m=0}^{2^n-1} (\vc{P}_k^{\ast})^m$. Therefore, Le Boudec's
algorithm~\cite{Le-Boud91} logarithmically reduces the number of
iterations for computing $\sum_{m=0}^M (\vc{P}_k^{\ast})^m$.

\section*{Acknowledgments}
The author thanks Mr.\ Masatoshi Kimura and Dr.\ Tetsuya Takine for
their invaluable comments on the convergence of the limit formula
(\ref{limit-formula-(n)F}).  The author also thanks Dr.\ Tetsuya
Takine for sharing an early version of \cite{Taki16}. In addition, the
author acknowledges stimulating discussions on
Algorithm~\ref{algo-lower} with Kazuya Fukuoka.

%
%
%
%
\bibliographystyle{plain} 
\bibliography{hm-180925}

\end{document}